\theoremstyle{definition}
\newtheorem{defi}{Definition}[section]
\newtheorem{rmk}[defi]{Remark}
\theoremstyle{plane}
\newtheorem{thm}[defi]{Theorem}
\newtheorem{lemma}[defi]{Lemma}
\newcommand{\tbf}{\textbf}
\newcommand{\tsl}{\textsl}
\newcommand{\mbb}{\mathbb}
\newcommand{\mc}{\mathcal}
\newcommand{\veps}{\varepsilon}
\newcommand{\what}{\widehat}
\newcommand{\wtilde}{\widetilde}
\newcommand{\vphi}{\varphi}
\newcommand{\oline}{\overline}
\newcommand{\ra}{\rightarrow}
\newcommand{\g}{\gamma}
\newcommand{\s}{\sigma}
\renewcommand{\t}{\tau}
\newcommand{\lam}{\lambda}
\newcommand{\de}{\delta}
\renewcommand{\o}{\omega}
\newcommand{\lan}{\langle}
\newcommand{\ran}{\rangle}
\newcommand{\R}{\mathbb{R}}
\newcommand{\N}{\mathbb{N}}
\newcommand{\Z}{\mathbb{Z}}
\newcommand{\T}{\mathbb{T}}
\newcommand{\D}{\mathbb{D}}
\renewcommand{\div}{{\rm div}\,}
\newcommand{\sign}{{\rm sign}}
\newcommand{\al}{\alpha}
\newcommand{\bt}{\beta}
\def\d{\partial}
\def\div{{\rm div}\,}
\newcommand{\dd}{\d^2_{x}}
\begin{document}

\newcommand{\fra}[1]{\textcolor{blue}{#1}}

\title{\textsc{\Large{\textbf{Well-posedness and singularity formation  
for the \\ Kolmogorov two-equation model of turbulence in 1-D}}}}

\author{\normalsize\textsl{Francesco Fanelli}$\,^{1a,b,c}\qquad$ and $\qquad$
\textsl{Rafael Granero-Belinch\'on}$\,^{2}$ \vspace{.5cm} \\
\footnotesize{$\,^{1a}\;$ \textsc{BCAM -- Basque Center for Applied Mathematics}}  \vspace{.1cm} \\
{\footnotesize Alameda de Mazarredo 14, E-48009 Bilbao, Basque Country, SPAIN} \vspace{.3cm} \\
\footnotesize{$\,^{1b}\;$ \textsc{Ikerbasque -- Basque Foundation for Science}}  \vspace{.1cm} \\
{\footnotesize Plaza Euskadi 5, E-48009 Bilbao, Basque Country, SPAIN} \vspace{.3cm} \\
\footnotesize{$\,^{1c}\;$ \textsc{Universit\'e de Lyon, Universit\'e Claude Bernard Lyon 1}}  \vspace{.1cm} \\
{\footnotesize \it Institut Camille Jordan -- UMR 5208}  \vspace{.1cm}\\
{\footnotesize 43 blvd. du 11 novembre 1918, F-69622 Villeurbanne cedex, FRANCE} \vspace{.3cm} \\
\footnotesize{$\,^{2}\;$ \textsc{Universidad de Cantabria}}  \vspace{.1cm} \\
{\footnotesize \it Departamento de Matem\'aticas, Estad\'istica y Computaci\'on}  \vspace{.1cm}\\
{\footnotesize Avda. Los Castros s/n, Santander, SPAIN} \vspace{.3cm} \\
\footnotesize{Email addresses: $\,^{1}\;$\ttfamily{ffanelli@bcamath.org}}, $\;$
\footnotesize{$\,^{2}\;$\ttfamily{rafael.granero@unican.es}}
\vspace{.2cm}
}

\date\today

\maketitle

\subsubsection*{Abstract}
{\footnotesize We study the Kolomogorov two-equation model of turbulence in one space dimension.
Two are the main results of the paper. First of all, we establish a local well-posedness theory in Sobolev spaces even in the case of vanishing mean turbulent kinetic
energy. Then, we show that there are smooth solutions which blow up in finite time.

To the best of our knowledge, these results are the first establishing the well-posedness of the system for vanishing initial data and the occurence of finite time singularities for the model under study.

}

\paragraph*{\small 2020 Mathematics Subject Classification:}{\footnotesize 35Q35 
(primary);
76F60, 
35B65, 
35B44 
(secondary).}

\paragraph*{\small Keywords: }{\footnotesize Kolmogorov two-equation model of turbulence; vanishing turbulent kinetic energy; local well-posedness; finite
time blow-up.}


\section{Introduction} \label{s:intro}

In 1941, in a series of papers Kolmogorov and Obukhov layed the foundations of the celebrated mathematical theory of turbulence
which now goes under the name of \emph{Komogorov's 1941 theory}.
We refer \tsl{e.g.} to Chapter 6 of \cite{Les} and to Chapters 4 and 14 of \cite{CR-B} for an account of that theory, as well as to references quoted therein.

As an outcome of that theory, in \cite{Kolm_42} (see the Appendix of \cite{Spald} for an English translation) Kolmogorov formulated
the following system of partial differential equations to describe a fully developed homogeneous isotropic turbulence:
\begin{equation} \label{eq:kolm_d}
\left\{\begin{array}{l}
       \d_tu\,+\,(u\cdot\nabla) u\,+\,\nabla\pi\,-\,\nu\,\div\left(\dfrac{k}{\o}\,\D u\right)\,=\,f \\[1ex]
       \d_t\o\,+\,u\cdot\nabla\o\,-\,\alpha_1\,\div\left(\dfrac{k}{\o}\,\nabla\o\right)\,=\,-\,\alpha_2\,\o^2 \\[1ex]
       \d_tk\,+\,u\cdot\nabla k\,-\,\alpha_3\,\div\left(\dfrac{k}{\o}\,\nabla k\right)\,=\,-\,k\,\o\,+\,\alpha_4\,\dfrac{k}{\o}\,\big|\D u\big|^2 \\[2ex]
       \div u\,=\,0\,.
       \end{array}
\right.
\end{equation}
In the previous system, the three unknowns $u$, $\o$ and $k$ are functions of time and space variables $(t,x)\in\R_+\times\Omega$, where $\Omega$ is a domain in
$\R^d$, with $d=2,3$. The vector field $u\in\R^d$ represents the average velocity of the fluid: it is the time-averaged velocity in the original paper by Kolmogorov
(see again \cite{Spald}), but it is interpreted as an ensemble average in the modern approach (see \cite{Mo-Pi}, \cite{Les}). The velocity
field $u$ is assumed to be incompressible, whence the last equation in \eqref{eq:kolm_d} and the presence of the lagrangian multiplier $\nabla\pi$ in
the first relation of the system, where $\pi$ is the mean pressure of the fluid.
On the other hand, $\o$ and $k$ are positive scalar functions, representing respectively the mean frequency of the turbulent fluctuations
and the average turbulent kinetic energy (called ``pulsation velocity'' in \cite{Spald}).
The vector field $f\in\R^d$ is a known (average) external force acting on the fluid, which we will set soon equal to $0$.
Finally, all the parameters $\nu,\al_1\ldots \al_4$ are physical adimensional parameters which are strictly positive numbers: we have
\[
\nu\,>0\,,\qquad \al_1\,,\;\al_2\,,\;\al_3\,,\;\al_4\;>\;0\,.
\]
In general, their values depend on the characteristics of the flow; however, for some of them Kolmogorov is able to specify the precise numerical value (we refer once more to \cite{Kolm_42} and \cite{Spald}).

From a mathematical point of view, system \eqref{eq:kolm_d} is a parabolic system, with viscosity (diffusion coefficient) proportional to $k/\o$. As a matter of fact, the symbol $\D$ appearing in the first and third equations stands for the symmetric part of the gradient of $u$:
\[
 \D u\,:=\,\frac{1}{2}\,\big(D u\,+\,\nabla u\big)\,,
\]
where we have denoted by $Du$ the Jacobian matrix of $u$ and by $\nabla u$ its transpose matrix. 
Notice that, despite the presence of the two damping terms $-\al_2\,\o^2$ and $-\o\,k$ in the second and third equations respectively, the
mean motion is feeding energy from the larger scales to the small scales, namely to $k$, through the $\al_4$-term in the third equation.

To begin with, let us spend a few words on the derivation of the previous model.

\subsection*{Derivation of the model}

As well explained in \cite{Les} (see especially Chapter 4) and \cite{Mo-Pi} (see Chapter 3 therein), for instance, 
the common approach to the turbulence theory starts by decomposing the incompressible velocity $u\,=\,u(t,x)$ 
into its mean part $\lan u\ran$ and fluctuating part $\wtilde u$: namely, in the turbulent regime we write $u\,=\,\lan u\ran+\wtilde{u}$.
Notice that the new velocity fields $\lan u\ran$ and $\wtilde{u}$ are still incompressible.
Injecting the decomposition $u\,=\,\lan u\ran+\wtilde{u}$ into the incompressible Navier-Stokes equations solved by $u$, we derive the so-called
\emph{Reynolds averaged Navier-Stokes equations}, denoted with the acronym RANS.
These are a cascade of differential equations for $\lan u\ran$, $\wtilde u$ and their higher order correlations.
The main problem is that RANS models are not closed.

For instance, one can start by taking the average of the Navier-Stokes system to derive a differential relation for $\lan u\ran$; 
however, owing to the non-linearities coming into play in the original system, when doing that
an additional term appears, often called \emph{Reynolds stress}, which depends on the oscillating component $\wtilde u$. More precisely, one needs an information on
the correlation $\lan \wtilde u\otimes\wtilde u \ran$ in order to close the system and derive an equation for $\lan u\ran$. This reflects the physical principle that small-scale processes affect the mean motion at larger scales.
The problem is that such an information is not given \tsl{a priori}; in order to find it, one can use the equation for $\wtilde u$ and write an equation for the Reynolds stress, but it is easily seen that a third-order correlation appears. In this way, one can generate a cascade
of equations for the correlation functions of any order, but again, the equation for the  $n$-th order correlation will depend on the moments up to order $n+1$.

Thus, a \emph{closure hypothesis} is needed in order to obtain a closed system and work on it. From this point of view, Kolmogorov's system \eqref{eq:kolm_d}
belongs to the class of two-equation models of turbulence, for which, as the name suggests, the closure is given by imposing two additional equations.
After having been almost left aside for a long time in favour of the one-equation models (whose main example is the $k$-model proposed
by Prandtl in \cite{Pra}, and where the closure is given by imposing only one equation, typically for the turbulent kinetic energy $k$),
two-equation models have recently gained a renewed interest, essentially because in those theories the local length scale $\ell$ of turbulence
has not to be prescribed arbitrarily, but can be deduced from the solution of the system.

The most known two-equation models of turbulence are probably the so-called $k-\veps$ models. Here, $\veps$ typically\footnote{Be aware that,
in the original Kolmogorov paper \cite{Kolm_42}, the symbol $\veps$ was used instead to represent the term $|\D u|^2$ appearing in the third equation
of \eqref{eq:kolm_d}.} denotes the turbulent energy dissipation rate. In fact, any pair of variables of the form $k^a\,\veps^b$, with integer values of $a$ and $b$,
can be used to derive a different but equivalent $k-\veps$ model.
In Kolmogorov's system \eqref{eq:kolm_d}, the small-scale unknowns are instead $k$ and $\o$: in this case, the energy dissipation rate $\veps$
and the length scale $\ell$ can be recovered \tsl{via} the formulas
\begin{equation} \label{eq:e-l}
\veps\,=\,k\,\o\qquad\qquad\mbox{ and }\qquad\qquad
\ell\,=\,c\,\frac{\sqrt{k}}{\o}\,,
\end{equation}
where $c>0$ is an adimensional physical constant.

\subsection*{Previous mathematical results}

The systematic mathematical analysis of equations \eqref{eq:kolm_d} is quite recent. Let us give a short account of previous works devoted to their study.

In \cite{Mie-Nau} (see also \cite{Mie-Nau_CRAS} for an announcement of the results), Mielke and Naumann proved the existence of global in time
weak solutions to system \eqref{eq:kolm_d} set on the torus $\T^3$. The solutions constructed therein are analogous in spirit to Leray's finite energy solutions
for the incompressible Navier-Stokes equations, except for the fact that $k$ cannot really belong to $L^2$, because of the last term appearing on the
right-hand side of its equation, which is merely in $L^1$ (yet, the authors are able to prove a gain of integrability for that quantity).
Also, due to the lack of space compactness for that $L^1$ term, the solutions are in fact weak solutions of the system, but one has to admit the presence
of a defect measure in the weak formulation of the equation for $k$. We refer to work \cite{Mie-Nau} for more details.

An important point of the analysis of \cite{Mie-Nau} is that the initial frequency function $\o_0$ and the initial mean turbulent kinetic energy $k_0$
are taken strictly positive: more precisely, the authors assume that
\begin{equation} \label{ass:o-k}
0\,<\,\o_{0,*}\,\leq\,\o_0\,\leq\,\o^*_0\qquad\qquad\mbox{ and }\qquad\qquad k_0\,\geq\,k_{0,*}\,>\,0\,,
\end{equation}
for suitable positive constants $\o_{0,*}$, $\o_0^*$ and $k_{0,*}$. As a matter of fact, those positivity conditions are preserved by the flow, in the sense that,
for later times $t>0$ and for any $x\in\T^3$, one gets
\[
0\,<\,\o_*(t)\,\leq\,\o(t,x)\,\leq\,\o^*(t)\qquad\qquad\mbox{ and }\qquad\qquad k(t,x)\,\geq\,k_*(t)\,>\,0\,,
\]
for suitable time-dependent functions $\o_*(t)$, $\o^*(t)$ and $k_*(t)$. See also our discussion in Subsection \ref{ss:ode} below. Thus,
system \eqref{eq:kolm_d} becomes really parabolic under conditions \eqref{ass:o-k}, and this is a key ingredient in the analysis of \cite{Mie-Nau}.

The same positivity assumption \eqref{ass:o-k} is formulated (and plays a fundamental role)
in works \cite{Kos-Kub} and \cite{Kos-Kub_glob} by Kosewski and Kubica.
There, the authors prove respectively local in time well-posedness, global in time well-posedness for small data, for the Kolmogorov model \eqref{eq:kolm_d}
set in $\T^3$, at $H^2$ level of regularity on the initial data.

At this point, let us comment more on assumption \eqref{ass:o-k}.
As far as our understanding of turbulence theory goes, we observe that, while the first condition on $\o_0$ is somewhat expected from the physical point of view, the strict positivity of $k_0$ may look as an \textsl{ad hoc} assumption, formulated in order to guarantee the non-degeneracy of the parabolic smoothing for the solutions.

To the best of our knowledge, the only work which is able to deal with a possible vanishing of the function $k_0$ is paper \cite{Bul-Mal} by
Bul\'i\v{c}ek and M\'alek. There, the authors study the existence of global in time finite energy (in the same sense as the one described above
for \cite{Mie-Nau}) weak solutions to system \eqref{eq:kolm_d}. Here, the word ``weak'' has to be intended in a classical sense, since
no defect measures are needed. Remarkably, the problem is set on a bounded domain $\Omega\subset\R^3$,
and suitable boundary conditions are formulated for the three unknowns $u$, $k$ (called $b$ in \cite{Bul-Mal}) and $\o$. However,
the authors reformulate the system in terms of the energy function $E$, which is defined (in our notation) as
\[
E\,:=\,\frac{1}{2}\,|u|^2\,+\,\frac{\nu}{\al_4}\,k\,.
\]
The energy function $E$ satisfies a better relation than $k$ itself, inasmuch as the higher order terms appearing in its equation are in a divergence form.
Moreover, for reasons linked again to the lack of integrability for $k$, also the equation for $\o$ is reformulated: essentially,
one integrates by parts the non-linearity occurring in the parabolic term of the second equation of \eqref{eq:kolm_d}. 
Thus, the analysis is performed on a system collecting the (new) equations for $u$, $\o$ and $E$ (and not on the original system),
and the existence of weak solutions is proved for that system.
It is worth noticing that, as pointed out in \cite{Bul-Mal}, the two systems of equations are equivalent for smooth solutions, but they are not
in a weak setting. However, the authors show that, recovering $k$ from $u$ and $E$, the triplet of functions $\big(u,\o,k\big)$ are a kind of
\emph{suitable weak solutions} for the original system \eqref{eq:kolm_d}, where the equation for $\o$ is yet the modified one.

Concerning the possible vanishing of the mean turbulent kinetic energy, in \cite{Bul-Mal} $k_0$ is assumed to satisfy
\[
k_0\,>\,0\,,\qquad\qquad \mbox{ with }\qquad 
\log k_0\,\in\,L^1(\Omega)\,,
\]
instead of \eqref{ass:o-k}. Thus, $k_0$ is not assumed to possess a strictly positive lower bound, but the logarithmic hypothesis
gives a control on its possible vanishing. Correspondingly, the solutions constructed in that paper are proved to satisfy the same type of bounds for $k$,
uniformly in time.

We conclude this part by mentioning a recent result \cite{Mie} by Mielke, who is able to construct weak and very weak solutions
for a class of reduced models of the original Kolmogorov system \eqref{eq:kolm_d} (somehow similar to the toy-model we will discuss below),
allowing for initial data $k_0\geq0$ having general support.

\subsection*{Contents of the paper and main results}
In this paper, we study the $1$-D version of the \emph{Kolmogorov two-equation model of turbulence}.
Dropping the divergence-free condition on the velocity field, and correspondingly erasing the pressure function from the equation for $u$, system \eqref{eq:kolm_d} reduces to
\begin{equation} \label{eq:kolm}
\left\{\begin{array}{l}
       \d_tu\,+\,u\,\d_xu\,-\,\nu\,\d_x\left(\dfrac{k}{\o}\,\d_xu\right)\,=\,0 \\[1ex]
       \d_t\o\,+\,u\,\d_x\o\,-\,\alpha_1\,\d_x\left(\dfrac{k}{\o}\,\d_x\o\right)\,=\,-\,\alpha_2\,\o^2 \\[1ex]
       \d_tk\,+\,u\,\d_xk\,-\,\alpha_3\,\d_x\left(\dfrac{k}{\o}\,\d_xk\right)\,=\,-\,k\,\o\,+\,\alpha_4\,\dfrac{k}{\o}\,\big|\d_xu\big|^2\,,
       \end{array}
\right.
\end{equation}
supplemented with suitable initial data
$$
u_{|t=0}\,=\,u_0\,,\qquad \o_{|t=0}\,=\,\o_0\,,\qquad k_{|t=0}\,=\,k_0\,.
$$
The precise assumptions on those initial data will be specified below. 

We avoid here any consideration about boundary effects, and we set the previous system \eqref{eq:kolm} in a very simple geometry,
namely in the spacial domain
$$
\Omega\,=\,\T\,:=\,[-\pi,\pi]/\sim\,, 
$$
where $\sim$ denotes the equivalence relation which identifies $-\pi$ and $\pi$.
Of course, this is a quite important simplification, because it completely removes all boundary-induced turbulent phenomena in the dynamics.
However, our focus here is rather on understanding the well-posedness \tsl{vs} ill-posedness problem of Kolmogorov's model in the case of vanishing turbulent
kinetic energy $k$.

The main contribution of this paper is twofold. On the one hand, we prove that, even for vanishing $k_0$ (which implies that $k(t)$
also vanishes at any later time),
the system is locally in time well-posed in spaces $H^m$, for any $m\in\N$ such that $m\geq2$. We refer to Theorem \ref{t:local} below for the precise statement.
On the other hand, we show also that there are solutions that blow up in finite time: this is the content of Theorem \ref{t:blow-up}.


It should be noted that
the Kolmogorov system \eqref{eq:kolm} contains several interesting limiting cases.
For instance, setting $\o\equiv0$ and $k\equiv0$, the equation for $u$ reduces to the classical inviscid Burgers equation.
As is well-known, see \tsl{e.g.} \cite{Wh}, for the inviscid Burgers equation there are smooth initial data for which the corresponding solutions
develop a singularity in finite time.
On the other hand, if one takes 
\[
u\equiv0\qquad\qquad \mbox{ and }\qquad\qquad \o=\o(t):=\frac{\oline{\o}}{1+\alpha_2\oline{\o}t}
\]
for some $\oline\o>0$, system \eqref{eq:kolm} reduces to a porus medium equation for $k$ of the form
\begin{equation} \label{intro_eq:PME}
 \d_tk\,-\,\frac{\alpha_3}{2\,\o(t)}\,\dd\big(k^2\big)\,=\,-\,\o(t)\,k\,.
\end{equation}
We refer to the classical book \cite{Va} for an in-depth study of porus medium equations.
For the sake of completeness, we will study equation \eqref{intro_eq:PME} in Appendix \ref{app:PME} and we will establish a global well-posedness result
in $H^1(\T)$ for initial $k_0\geq0$ having general support (although it could be seen, for instance by following the approach of \cite{F-GB_ZAMP},
that there exist smooth solutions in $H^2$ which do blow up in finite time).
All this shows that the structure of equations \eqref{eq:kolm} when $k$ vanishes is rather intricate and mixes non-linear effects, hyperbolic phenomena
and non-trivial coupling between the different quantities. In particular, this can be seen as a (rough, but still consistent) justification
of the assumptions made below for establishing a suitable well-posedness theory.
On the other hand, although both those limiting cases present finite time blow-up of smooth solutions,
the general behaviour of solutions to the Kolmogorov system \eqref{eq:kolm} remains an interesting open question.

More comments about our results will be given below. To begin with,
in order to explain the main difficulties linked to our study and to motivate the assumptions we will formulate in our main theorems,
let us introduce a toy-model. 

\paragraph{A toy-model.} At a first sight, the local in time well-posedness result may appear a trivial consequence of classical quasi-linear
hyperbolic theory applied to system \eqref{eq:kolm}. Actually, this is not completely true,
owing to the possible vanishing of the function $k$, which in fact creates some complications and makes
the argument for the local well-posedness more tricky than that.

In order to see this, let us introduce the following toy-model:
\begin{equation} \label{eq:toy}
\left\{\begin{array}{l}
       \d_tu\,+\,u\,\d_xu\,-\,\d_x\left(\g\,\d_xu\right)\,=\,0 \\[1ex]
       \d_t\g\,+\,u\,\d_x\g\,-\,\d_x\left(\g\,\d_x\g\right)\,=\,\g\,\big|\d_xu\big|^2\,,
       \end{array}
\right.
\end{equation}
supplemented with initial conditions $\big(u,\g\big)_{|t=0}\,=\,\big(u_0,\g_0\big)$, for some $\g_0\geq0$.
Notice that a general class of similar reduced systems has been recently introduced in \cite{Mie}.

In system \eqref{eq:toy}, the function $\g$ essentially plays the role of the viscosity function $k/\o$ appearing in the original system \eqref{eq:kolm}.
Of course, we have retained only the main structural terms of the equation for $k/\o$, namely transport by $u$ and diffusion with diffusion coefficient
equal to $\g$ itself, as well as the apparently most complicated term $\g\,|\d_xu|^2$.
In addition, we have set all the parameters $\nu$ and $\al_j$ equal to $1$.

It is easy to see (for instance, by applying a similar argument to the one presented in Subsection \ref{ss:ode} below) that system \eqref{eq:toy} preserves
the minimal value of $\g$, exactly as equations \eqref{eq:kolm} do for $k$. So, we can deduce that $\g(t)\geq0$ for any time $t\geq0$ for which the solution exists.
Moreover, we can reproduce $L^2$ energy estimates for $u$ and $L^1$ estimates for $\g$, obtaining similar controls as the ones
encoded in the original system.
Actually, the toy-model \eqref{eq:toy} preserves much more properties of the solutions of the original equations,
in particular all the properties which allow us to deduce the singularity formation in finite time, but at this stage we do not need that in our discussion.

The problems come when we try to propagate higher regularity norms of the solution. In order to fix ideas, let us focus on $H^2$ estimates,
which are the simplest ones, because they require to propagate the minimal integer regularity which is expected to be needed for well-posedness.
Now, if we write an equation for $\dd u$, we get
\[
\d_t\dd u\,+\,u\,\d_x\dd u\,-\,\d_x\big(\g\,\d_x\dd u\big)\,=\,\big[u,\dd\big]\d_xu\,+\,\d_x\left(\big[\g,\dd\big]\d_xu\right)\,,
\]
where we have denoted by $[A,B]\,:=\,AB-BA$ the commutator between two operators $A$ and $B$. 
The bad term comes from the second commutator: indeed, when performing energy estimates for $\dd u$, one has to deal with a term of the form
\begin{equation} \label{eq:bad}
\int_\Omega\dd\g\,\d_xu\,\d_x^3u\,dx\,.
\end{equation}
That integral is of course out of control, because any third order derivative can be estimated only if a prefactor $\sqrt\g$ appears in front of it,
but we cannot simply do that because we have no strictly positive lower bound for $\sqrt{\g}$. It is also apparent that the situation does not improve
when integrating by parts, for instance.

However, it is easy to see that the integral \eqref{eq:bad} can be estimated if one disposes of a $L^\infty$ bound on $\d_x\sqrt{\g}$.  Indeed, it is enough to note that
$$
\int_\Omega\dd\g\,\d_xu\,\d_x^3u\,dx\,=\,2\int_\Omega\sqrt{\g}\,\dd\sqrt{\g}\,\d_xu\,\d_x^3u\,dx\,+\,2\int_\Omega(\d_x\sqrt{\g})^2\,\d_xu\,\d_x^3u\,dx\,:
$$
the first term on the right can be immediately bounded by Cauchy-Schwarz inequality, whereas for the second we can integrate by parts once, and then apply Cauchy-Schwarz again.
This consideration motivatives
us to work with the ``good unknown'' $\sqrt{\g}$ rather than with $\g$ itself. Because of Sobolev inequalities, $H^2$ bounds for $\sqrt{\g}$ imply
$H^2$ bounds for $\g$, whereas of course the contrary is not true in general. On the other hand, when reformulating system \eqref{eq:toy} in terms
of $\sqrt{\g}$, the bad terms disappear, because whenever a third order derivative hits $\sqrt{\g}$ or $u$, it will be always possible to couple
that term with another prefactor $\sqrt{\g}$.

To conclude this part, we remark that the regularity condition $\sqrt{\g}\in H^2$ looks quite stringent in order to study
properties linked with the propagation of the support of $\g$. However, our focus here is rather on the well-posedness theory and on the possible
development of singularities in finite time. The argument above shows that, in this context, bypassing the condition $\sqrt{\g}\in H^2$ does not seem
straightforward.

\paragraph{Local well-posedness.}

The previous considerations constitute the main motivation to formulate our first main result, namely well-posedness of equations \eqref{eq:kolm}
in a hyperbolic functional setting, where however conditions will be formulated on $\big(u,\o,\sqrt{k}\big)$ instead of $\big(u,\o,k\big)$.
This is the matter of Theorem \ref{t:local} below.

We consider solutions at $H^m$ level of regularity, for any integer $m\geq2$. Our proof will be based
on elementary methods (see more details below); our goal is to give a simple to understand, but robust argument to construct solutions for vanishing $k$.
Thus, we focus only on the case of regularity of integer order.
On the other hand, as symmetry in $L^2$ of the parabolic part of the equation plays a major role
in our argument (recall the arguments explained for the toy-model), the generalisation to integrability exponents different than $2$ does not really look straightforward.

After those considerations, it is now time to state our first result. Notice that, differently from all previous results, no special assumptions (other than those imposed by the fact that $\sqrt{k_0}$ lies in an appropriate Sobolev space) on the possible
vanishing of $\sqrt{k_0}$ are needed.

\begin{thm} \label{t:local}
Fix $m\in\N$ such that $m\geq 2$.
Let $\big(u_0,\o_0,k_0\big)$ be a triplet of functions belonging to $H^m(\Omega)$. In addition, assume that the following conditions are verified:
\begin{enumerate}[(i)]
 \item there exist two constants such that $0\,<\,\o_*\,\leq\,\o_0\,\leq\,\o^*$;
 \item $k_0\,\geq\,0$ is such that $\bt_0\,:=\,\sqrt{k_0}\,\in\, H^m(\Omega)$.
\end{enumerate}

Then, there exists a time $T>0$ such that system \eqref{eq:kolm}, supplemented with $\big(u_0,\o_0,k_0\big)$ as initial datum,
admits a solution $\big(u,\o,k\big)$ on $[0,T]\times\Omega$ with the following properties:
\begin{enumerate}[(a)]
\item for any time $t\in[0,T]$, one has $k(t)\,\geq\,0$ and $\min_{x\in\Omega}\o(t,x)\,>\,0$;
 \item the functions $u$, $\o$ and $\sqrt{k}$ belong to the space $L^\infty\big([0,T];H^m(\Omega)\big)\,\cap\,\bigcap_{s<m}C\big([0,T];H^s(\Omega)\big)$;
 \item the three functions $\sqrt{k/\o}\,\d_x^{m+1}u$, $\sqrt{k/\o}\,\d_x^{m+1}\o$ and $\sqrt{k/\o}\,\d_x^{m+1}\sqrt{k}$ all belong to the space
$L^2\big([0,T];L^2(\Omega)\big)$.
\end{enumerate}
In addition, such solution is unique in the class
\begin{align*}
\mbb X_T(\Omega)\,&:=\,\Big\{\big(u,\o,k\big)\;\Big|\quad \o,\o^{-1},\,{k}\;\in\,L^\infty\big([0,T]\times\Omega\big)\,,\quad \o>0\,,\quad k\geq0 \\
&\qquad\qquad\qquad\quad
u,\,\o,\,\sqrt{k}\;\in\,C\big([0,T];L^2(\Omega)\big)\,,\quad \d_xu,\,\d_x\o,\,\d_x\sqrt{k}\;\in\,L^\infty\big([0,T]\times\Omega\big) \Big\}\,.
\end{align*}
Finally, denote by $T^*>0$ the lifespan of the previous solution, and define
\[
\mc E_0\,:=\,\left\|u_0\right\|_{H^2}^2\,+\,\left\|\o_0\right\|_{H^2}^2\,+\,\left\|\sqrt{k_0}\right\|_{H^2}^2\,.
\]
Then there exists a constant $C>0$, depending only on $(\nu,\al_1\ldots\al_4)$, such that one has
\begin{equation} \label{est:lower-T}
T^*\,\geq\,\min\left\{1\,,\,\frac{C}{\left(\max\big\{1,\mc E_0\big\}\right)^2}\right\}\,.
\end{equation}
Moreover, 
one has $T^*\,<\,+\infty$ if and only if
\begin{equation} \label{cont-cont}
\int^{T^*}_0\left(1+\left\|\sqrt{k}\right\|^2_{L^\infty}\right)\,\left\|\left(\d_xu,\d_x\o,\d_x\sqrt{k}\right)\right\|^2_{L^\infty}\,dt\,=\,+\infty\,.
\end{equation}
\end{thm}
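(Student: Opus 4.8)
The plan is to run a parabolic regularisation and compactness scheme, but carried out on the system rewritten in terms of the good unknown $\bt:=\sqrt{k}$ singled out by the toy-model discussion, rather than on \eqref{eq:kolm} directly. Writing $\g:=\bt^2/\o=k/\o$ for the common diffusion coefficient and dividing the equation for $k$ by $2\bt$, the elementary identity
$$\frac{1}{\bt}\,\d_x\Big(\frac{\bt^{3}}{\o}\,\d_x\bt\Big)=\d_x\big(\g\,\d_x\bt\big)+\frac{\bt\,(\d_x\bt)^{2}}{\o}$$
turns the third equation into
$$\d_t\bt+u\,\d_x\bt-\al_3\,\d_x\big(\g\,\d_x\bt\big)=\al_3\,\frac{\bt\,(\d_x\bt)^{2}}{\o}-\frac{1}{2}\,\bt\,\o+\frac{\al_4}{2}\,\frac{\bt}{\o}\,(\d_xu)^{2},$$
while the equations for $u$ and $\o$ keep their form with the same $\g$. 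Two structural facts support the whole argument: all three equations now diffuse with the \emph{same} coefficient $\g$, and $\sqrt{\g}=\bt/\sqrt{\o}$ is itself an $H^m$ function, controlled by $\bt$, $\o$ and a lower bound on $\o$. Crucially, every term on the new right-hand side carries a prefactor $\bt=\sqrt{\g}\,\sqrt{\o}$, so a $\sqrt\g$ can always be freed when needed.

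Next I would regularise by replacing $\g$ with $\g+\veps$, making the system uniformly parabolic, and solve it on a short time interval for each fixed $\veps>0$ by a classical fixed-point or Galerkin scheme, producing smooth approximations $(u_\veps,\o_\veps,\bt_\veps)$. Two qualitative bounds survive the regularisation via comparison/maximum-principle arguments (as in the computation alluded to around \eqref{eq:kolm}): the frequency stays pinched, $0<\o_*(t)\le\o_\veps\le\o^*$ with $\o_*(t)=\o_*/(1+\al_2\o_* t)$ read off from the ODE $\dot y=-\al_2 y^2$, so that $\|\o_\veps^{-1}\|_{L^\infty}$ is finite on every finite interval; and $\bt_\veps\ge0$ is preserved because the right-hand side of the $\bt$-equation vanishes wherever $\bt=0$. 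In particular $k_\veps=\bt_\veps^2\ge0$, and the upper bound $\|\o_\veps\|_{L^\infty}\le\o^*$ holds since the frequency equation has nonpositive source.

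The heart of the proof, and the main obstacle, is a family of $H^m$ energy estimates uniform in $\veps$. Setting $E=\|u\|_{H^m}^2+\|\o\|_{H^m}^2+\|\bt\|_{H^m}^2$ and dissipation $D=\|\sqrt\g\,\d_x^{m+1}u\|_{L^2}^2+\|\sqrt\g\,\d_x^{m+1}\o\|_{L^2}^2+\|\sqrt\g\,\d_x^{m+1}\bt\|_{L^2}^2$, I would apply $\d_x^m$ to each equation and test against $\d_x^m$ of the corresponding unknown; the leading diffusion produces $+c\,D$. The only genuinely dangerous contribution is the degenerate commutator analogous to \eqref{eq:bad}, namely (after one integration by parts) the piece $\int_\Omega(\d_x^m\g)\,\d_xu\,\d_x^{m+1}u\,\dx$. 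Here the good unknown is decisive:
$$\int_\Omega(\d_x^m\g)\,\d_xu\,\d_x^{m+1}u\,\dx\;=\;2\int_\Omega(\d_x^m\sqrt{\g})\,\d_xu\;\sqrt{\g}\,\d_x^{m+1}u\,\dx\;+\;R,$$
where $R$ gathers terms with at most $m-1$ derivatives on $\g$. The displayed principal term is bounded by $\|\d_xu\|_{L^\infty}\,\|\d_x^m\sqrt\g\|_{L^2}\,\|\sqrt\g\,\d_x^{m+1}u\|_{L^2}$ and absorbed into $D$ by Young's inequality, the factor $\d_x^m\sqrt\g$ being a genuine energy quantity; had we kept $k$ (i.e.\ $\g$) as unknown, the same term would force $\d_x^m\g/\sqrt\g$, which is uncontrolled as $\g\to0$. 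All remaining commutator terms distribute at least two derivatives so that, after at most one further integration by parts, the top factor is reduced to $\d_x^m(\cdot)\in L^2$, controlled by products of $H^m$ norms with no recourse to the degenerate dissipation; the $\bt$-equation source terms are handled identically, each freeing a $\sqrt\g$. This yields
$$\frac{d}{dt}E+c\,D\;\le\;C\,\big(1+\|\bt\|_{L^\infty}^{2}\big)\,\big\|(\d_xu,\d_x\o,\d_x\bt)\big\|_{L^\infty}^{2}\,E,$$
with $C$ depending on $\o_*^{-1}$, $\o^*$ and $(\nu,\al_1\ldots\al_4)$, uniformly in $\veps$. Taking $m=2$, bounding the $L^\infty$ factors through $H^2\hookrightarrow W^{1,\infty}$ by $E$ itself, produces a closed Riccati-type inequality $\tfrac{d}{dt}E\le C\,(1+E)\,E^2$, whose integration gives the explicit lifespan bound \eqref{est:lower-T}.

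Finally I would pass to the limit and conclude. The uniform bounds give $(u_\veps,\o_\veps,\bt_\veps)\upra(u,\o,\bt)$ weakly-$*$ in $L^\infty([0,T];H^m)$; the equations bound the time derivatives in a lower norm, so Aubin–Lions furnishes strong convergence in $C([0,T];H^s)$ for every $s<m$, enough to pass to the limit in all nonlinear terms and, setting $k=\bt^2$, recover a solution satisfying (a)--(c). For uniqueness in $\mbb X_T(\Omega)$, I would compare two solutions by estimating the differences $(\de u,\de\o,\de\bt)$ in $L^2$: the defining bounds of $\mbb X_T$ on $\o$, $\o^{-1}$, $k$ and on $\d_xu,\d_x\o,\d_x\sqrt{k}$ are exactly what is needed to control the transport, the diffusion difference and the quadratic source, so Gronwall closes the argument. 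For the continuation criterion, integrating the last displayed inequality and invoking Gronwall shows that finiteness of \eqref{cont-cont} keeps $E$ bounded on $[0,T^*)$, whence the local construction can be restarted beyond $T^*$; thus $T^*<+\infty$ forces \eqref{cont-cont} to diverge, and the converse is immediate since blow-up of $E$ entails blow-up of the $L^\infty$ norms appearing in \eqref{cont-cont}.
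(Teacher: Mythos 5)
Your construction rests on the same two pillars as the paper's: the good unknown $\bt:=\sqrt{k}$ (leading to the system \eqref{eq:kolm_2}) and $H^m$ energy estimates in which every top-order term is paired with a prefactor $\bt/\sqrt{\o}$ so as to be absorbed into the weighted dissipation $\int_\Omega(\bt^2/\o)\,|\d_x^{m+1}\cdot|^2\,dx$; the lifespan bound \eqref{est:lower-T} and the criterion \eqref{cont-cont} then follow by Gr\"onwall, as you say. The genuine difference is the approximation scheme. The paper never regularises the equation: it regularises the \emph{data}, setting $k_{0,\veps}:=(\sqrt{k_0}+\veps)^2\ge\veps^2$, and solves the resulting non-degenerate problems by invoking the result of \cite{Kos-Kub} adapted to 1-D. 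The approximate solutions thus solve the original system \eqref{eq:kolm}, so the a priori estimates of Section \ref{s:a-priori} apply to them verbatim and uniformly in $\veps$, after which Ascoli--Arzel\`a and interpolation give compactness. Your variant --- replacing $\g$ by $\g+\veps$ --- is also viable and buys self-containedness, but it costs two verifications the paper avoids: local solvability of the regularised quasi-linear parabolic system (standard, but it must be proved or referenced), and stability of all estimates under the modification (true, since the extra dissipation has a favourable sign and $\d_x^j(\g+\veps)=\d_x^j\g$ for $j\ge1$, but it is part of the work, including in the maximum-principle arguments for $\o$ and $\bt$). One small repair: your final differential inequality must carry the factor $\left(1+\left\|\left(\d_xu,\d_x\o,\d_x\bt\right)\right\|^2_{L^\infty}\right)$ rather than the bare square, because the $L^2$-level estimate for $\o$ and the lower-order terms contribute \emph{linearly} in those norms; this is why the paper's $A(t)$ in \eqref{def:A} contains the $+1$ (and also a factor $(1+t)^3$ coming from the decay of $\o_{\min}(t)$, which your time-independent constant hides).

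The thin spot is uniqueness, where two devices the paper needs are absent from your sketch. First, uniqueness is asserted in $\mbb X_T(\Omega)$ for solutions of \eqref{eq:kolm}, i.e.\ formulated in $k$, while your stability estimate runs on the $\bt$-formulation; at this low regularity the equivalence of the two weak formulations is not automatic when $k$ vanishes (one must justify $2\bt\,\d_t\bt=\d_t(\bt^2)$ in $\mc D'$ and the analogous space manipulations), and the paper proves it as Lemma \ref{l:k-b} via the regularisation $k+\veps$. Second, in the $L^2$ estimate for $U=u_1-u_2$ the difference of diffusion coefficients produces the term $\nu\int_\Omega\o_2^{-1}\,B\,(\bt_1+\bt_2)\,\d_xu_2\,\d_xU\,dx$, and the piece containing $\bt_2\,\d_xU$ can be absorbed neither by the only dissipation your single energy estimate generates, namely $\int_\Omega(\bt_1^2/\o_1)|\d_xU|^2\,dx$, nor by a crude bound (which is linear, not quadratic, in $\|B\|_{L^2}$ and so destroys Gr\"onwall). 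The paper's fix is a symmetrisation: $U$ solves two equivalent equations, one with the coefficients of each solution; performing the energy estimate on both and summing makes both dissipations $\int_\Omega(\bt_j^2/\o_j)|\d_xU|^2\,dx$ available. (Alternatively, one can write $\bt_2=\bt_1-B$ and exploit $\d_xU\in L^\infty$, which the class $\mbb X_T$ provides, to get a quadratic bound on the remainder.) Your assertion that the class bounds are ``exactly what is needed'' skips precisely this step, which is where the degeneracy of $k$ bites.
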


A remark on the previous theorem is in order.
\begin{rmk} \label{r:reg}
Notice that the continuation criterion \eqref{cont-cont} implies, as is classical in quasi-linear hyperbolic problems, that the lifespan of the
solution does not depend on its regularity: if the datum is smooth, say in $H^m$ with $m>2$ large, the lifespan of the corresponding solution as a
$H^2$ solution is the same as a $H^m$ solution.

In particular, this justifies the lower bound \eqref{est:lower-T} in terms only of the $H^2$ norm of the initial datum.
\end{rmk}
 
The proof of Theorem \ref{t:local} is based on elementary arguments. We approximate problem \eqref{eq:kolm} with vanishing initial $k_0$ with problems
having initial $k_{0,\veps}$'s which satisfy $k_{0,\veps}\geq\veps$. We thus construct approximate solutions by using the result from \cite{Kos-Kub},
and we pass to the limit in the approximation parameter $\veps\ra0^+$ by using a compactness argument.
This proves the existence, for which we refer to Subsection \ref{ss:local}. The relevant uniform bounds in suitable norms will be shown before,
in Section \ref{s:a-priori}.

Uniqueness of solutions is based on a simple stability estimate in $L^2$: see Subsection \ref{ss:unique}. Notice that our argument cannot be adapted
to prove a weak-strong uniqueness principle for system \eqref{eq:kolm}, since higher regularity is required on both families of solutions.
We refer to Remark \ref{r:weak-strong} for further comments about this.

To conclude this part, we point out that, in order to lighten the presentation, our proof will be limited to treat the lower regularity case only, namely
the case $m=2$. Generalisation to the case of $m>2$ is rather straightforward, hence omitted.

\paragraph{Singularity formation.}

Next, we show that the solutions we previously constructed in Theorem \ref{t:local} actually blow up in finite time, in general. This is the content
of the next result. 
That statement will be made more precise in Subsection \ref{ss:blow-up}, by formulating Theorem \ref{t:symmetry}.

\begin{thm} \label{t:blow-up}
There exist smooth initial data $\big(u_0,\o_0,k_0\big)$, such that the corresponding solutions $\big(u,\o,k\big)$ to system \eqref{eq:kolm} blow up in finite time.
\end{thm}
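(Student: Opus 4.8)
The plan is to exploit precisely the degenerate regime that Theorem~\ref{t:local} is designed to handle, namely the vanishing of $k$, in order to destroy the parabolic smoothing of the first equation and reduce it to an inviscid Burgers equation. I would start from the observation that the zero state for the turbulent kinetic energy is invariant under the dynamics: if $k_0\equiv0$, then the triplet $\big(u,\o,0\big)$ solves \eqref{eq:kolm} as soon as $u$ and $\o$ solve the reduced system
\begin{equation*}
\d_tu+u\,\d_xu=0\,,\qquad\qquad \d_t\o+u\,\d_x\o=-\,\al_2\,\o^2\,,
\end{equation*}
since every term in the $k$-equation is proportional to $k$ or to a spatial derivative of $k$, and hence vanishes identically. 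As long as $u$ stays smooth, this triplet belongs to the uniqueness class $\mathbb{X}_T$ of Theorem~\ref{t:local} (here $\o$ remains positive and bounded, being transported along the flow of $u$ and obeying a Riccati damping along characteristics), so by the uniqueness statement it must coincide with the solution produced by the well-posedness theorem.

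It then suffices to produce smooth data for which $\d_tu+u\,\d_xu=0$ develops a gradient singularity in finite time. I would choose $u_0$ odd and $\o_0$ even (for definiteness, $u_0=-\sin$ and $\o_0$ a positive constant), a symmetry preserved by the reduced system; this pins the incipient shock at $x=0$ and yields a closed ODE for $a(t):=\d_xu(t,0)$. Indeed, differentiating the Burgers equation in $x$, evaluating at $x=0$ and using $u(t,0)=0$ (forced by oddness), the quadratic transport contribution $u\,\d_x^2u$ drops out and one is left with the exact Riccati equation $a'(t)=-a(t)^2$, whence $a(t)=a_0/(1+a_0t)$ with $a_0=\d_xu_0(0)<0$. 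Consequently $\d_xu(t,0)\to-\infty$ as $t\to-1/a_0$, the announced finite-time blow-up. This is of course the classical gradient catastrophe for Burgers, but the odd symmetry makes the blow-up point and rate completely explicit; I expect this to be the content of the more precise Theorem~\ref{t:symmetry}.

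Finally I would convert the blow-up of $\d_xu$ into a statement on the lifespan $T^*$ of the constructed solution. Since $\sqrt{k}\equiv0$ here, the continuation criterion \eqref{cont-cont} reduces to the divergence of $\int_0^{T^*}\big\|(\d_xu,\d_x\o)\big\|_{L^\infty}^2\,\dt$; alternatively, and more transparently, one argues by contradiction: were $T^*>t^*:=-1/a_0$, the solution would be of class $H^2\hookrightarrow C^1$ up to time $t^*$, contradicting $|\d_xu(t,0)|\to\infty$. Either way $T^*\le t^*<+\infty$.

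The main obstacle, and the only genuinely non-trivial point, is the coincidence of the constructed solution with the explicit reduced one: this rests entirely on the uniqueness part of Theorem~\ref{t:local} and on verifying that the candidate $\big(u,\o,0\big)$ indeed lies in $\mathbb{X}_T$ for $t<t^*$. I would emphasise that this mechanism is available only because the well-posedness theory tolerates $k_0\equiv0$; for strictly positive $k_0$ the parabolic term $\nu\,\d_x\big((k/\o)\,\d_xu\big)$ would re-enter the equation for $a(t)$ through the uncontrolled third-order contribution $\d_x^2\big((k/\o)\,\d_xu\big)\big|_{x=0}$, and closing a blow-up estimate in that regime would be substantially harder — which is presumably why the statement is phrased as the existence of blow-up data rather than as a generic blow-up result.
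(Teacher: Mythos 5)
Your proposal is correct, and it does prove the literal statement of Theorem \ref{t:blow-up}: with $k_0\equiv 0$ the third equation is trivially satisfied by $k\equiv0$, the triplet $\big(u,\o,0\big)$ with $u$ solving inviscid Burgers and $\o$ solving the transport--Riccati equation lies in the uniqueness class $\mbb X_T$, so by the uniqueness part of Theorem \ref{t:local} it coincides with the constructed solution, whose lifespan is then capped by the classical gradient catastrophe. However, this is a genuinely different (and strictly more degenerate) route than the paper's. The paper's Theorem \ref{t:symmetry} does \emph{not} take $k_0\equiv0$: it only assumes $k_0(0)=0$ at the single symmetry point, together with $u_0$ odd, $\o_0,k_0$ even and $\d_xu_0(0)<0$, so the turbulent diffusion remains genuinely active away from $x=0$. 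The mechanism there is: symmetry plus the minimum principle of Subsection \ref{ss:ode} force $k(t,0)=\d_xk(t,0)=0$ for all times, so that differentiating the $u$-equation and evaluating at $x=0$ yields the Riccati equation $\xi'=-\xi^2+a(t)\,\xi$ for $\xi(t)=\d_xu(t,0)$, where $a(t)$ is proportional to $\d_x^2k(t,0)/\o(t,0)\geq0$ because $x=0$ is a minimum of $k\geq0$; since $\xi_0<0$, the viscous contribution $a\,\xi$ has a favourable sign, whence $\xi'\leq-\xi^2$ and blow-up. In other words, where you eliminate the parabolic term by fiat, the paper controls it by a sign argument. What each approach buys: yours is more explicit and elementary (pure Burgers, explicit blow-up time and rate), but it only exhibits blow-up in the regime where the model has literally no turbulence left; the paper's argument shows that singularities form even for data with nonvanishing turbulent kinetic energy away from one point, which is a substantially stronger statement and addresses precisely the objection you raise in your last paragraph --- the case where $\nu\,\d_x\big((k/\o)\d_xu\big)$ re-enters is not ``substantially harder'' here, because at the symmetry point the third-order term is killed by $k(t,0)=\d_xk(t,0)=0$ and the surviving coefficient $\d_x^2 k(t,0)\geq 0$ helps rather than hurts. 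One further small remark: both proofs lean on uniqueness (yours to identify the solution with the Burgers one, the paper's to propagate the odd/even symmetry), so neither is more demanding than the other in that respect.
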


The mechanism behind our proof of the blow-up is similar in spirit to the one for the Burgers equation.
In particular, our proof deeply relies on the compressibility of the velocity field $u$. Compared to the Burgers case, here it is fundamental that the ``viscosity
coefficient '' $k/\o$ vanishes, hence that $k$ itself vanishes; nonetheless, the basic issue here is that $k$ may vanish only at one point, and this is enough to create
a singularity. Of course, this is very much in contrast with the case where $k\geq \delta>0$, for which global existence of smooth solutions holds. 

Before moving on, let us spend a few more words to comment Theorem \ref{t:blow-up}. It is tempting to interpret the finite time singularity formation
as an indication of the fact that the Kolmogorov model \eqref{eq:kolm_d} is not a good model for describing turbulence. This remark would be
pretty much in connection with some physical theories, which criticise $k-\veps$ models and their pertinence for the theoretical understanding of
turbulence in fluids.

However, we have to point out that system \eqref{eq:kolm} is merely a $1$-D reduction of the original Kolmogorov model \eqref{eq:kolm_d}, and,
as such, it loses very much physics with respect to that system. In particular, one may object that real turbulence is expected to develop
only in higher space dimensions. 
Also, in system \eqref{eq:kolm} we neglect the divergence-free condition
on the velocity field, as well as the pressure term. This simplification kills completely non-local effects (often called long-range interactions
in physical theories), which though are strongly supposed to play an important role in turbulence (see \tsl{e.g.} Chapter 7 of \cite{Les}
and Chapter 16 of \cite{CR-B}).

More realistically, and as far as our understanding of turbulence goes, we can interpret Theorem \ref{t:blow-up} as a reflection of the fact
that the model is formulated by Kolmogorov in order to describe a fully developed turbulence, so it loses its meaning
in the regime of vanishing turbulent kinetic energy.
As a matter of fact, in Kolmogorov's theory of fully developed turbulence, the turbulent kinetic energy produced at small scales, whose typical length scale $\ell$
is defined in \eqref{eq:e-l}, is completely dissipated at large scales by viscosity. However, for vanishing $k\sim0$ the definition of the typical length
scale $\ell$ degenerates. 
Therefore, we expect that, for vanishing $k$, one should rather change the model.

\subsubsection*{Organisation of the paper}

Let us present now a short overview of the paper.
First of all, in Section \ref{s:LP} we collect some useful tools from Fourier analysis which are needed in our study.
In Section \ref{s:a-priori} we present \tsl{a priori} estimates for system \eqref{eq:kolm}. They will be the key step in the proof of our results,
namely Theorems \ref{t:local} and \ref{t:blow-up}, which will be presented in Section \ref{s:proof}.

\section{Littlewood-Paley theory on the torus} \label{s:LP}
We recall here a few elements of Littlewood-Paley theory and use them to derive some useful inequalities.
We refer \tsl{e.g.} to Chapter 2 of \cite{B-C-D} for details on the construction in the $\R^d$ setting, to reference \cite{Danchin} for the adaptation to the
case of a $d$-dimensional periodic box $\T^d_a$, where $a\in\R^d$
(this means that the domain is periodic in space with, for any $1\leq j\leq d$, period equal to $2\pi a_j$ with respect to the $j$-th component).

For simplicity of presentation, we focus here on the one-dimensional case $d=1$, the only relevant one for our study, and on the case where $a_1=1$.
Hence, we simply write the spacial domain as $\Omega\,=\,\T$.
We also denote by
$|\T|\,=\,\mc L(\T)$ the Lebesgue measure of the box $\T$.

\medbreak
First of all, let us introduce some notation and basic material. For a given function $u\in L^1(\T)$, we denote by $\mc Fu\,=\,\big(\what u_k\big)_{k\in\Z}$
its Fourier series, where the Fourier coefficients $\what u_k$ are defined by
\[
\what u_k\,:=\,\frac{1}{|\T|}\int_\T u(x)\,e^{-ik\cdot x}\,dx\,.
\]
Recall that, whenever $1<p<+\infty$, the Carleson-Hunt theorem yields the pointwise (a.e. on $\T$) convergence
of the Fourier series of $u$ to the function $u$ itself:
\begin{equation} \label{eq:Four-ser}
u(x)\,=\,\sum_{k\in\Z}\what{u}_k\,e^{ik\cdot x}\,. 
\end{equation}
We refer to \tsl{e.g.} Chapter 1 of \cite{Duoan} and Chapter 10 of \cite{Edw} for further details. By duality, we can then extend the validity
of formula \eqref{eq:Four-ser} to any distribution $u\in\mc D'(\T)$, where, following Triebel \cite{Triebel} (see Section 9.1 therein),
we have defined $\mc D'(\T)$ to be the topological dual of the set $\mc D(\T)\,:=\, C^\infty(\T)$
of smooth functions on the torus.

Next, we introduce the so called \textit{Littlewood-Paley decomposition}, based on a non-homogeneous dyadic partition of unity with
respect to the Fourier variable. 
We fix a smooth scalar function $\vphi$ such that $0\leq \vphi\leq 1$, $\vphi$ is even and supported in the ring $\left\{r\in\R\,\big|\ 5/6\leq |r|\leq 12/5 \right\}$, and such that
\[
\forall\;r\in\R\setminus\{0\}\,,\qquad\qquad \sum_{j\in\Z}\vphi\big(2^{-j}\,r\big)\,=\,1\,.
\]

Let us define $|D|\,:=\,(-\dd)^{1/2}$ as the Fourier multiplier\footnote{Throughout we agree  that  $f(D)$ stands for 
the pseudo-differential operator $u\mapsto\mc{F}^{-1}(f\,\mc{F}u)$.} of symbol $|k|$, for $k\in\Z$.
The dyadic blocks $(\Delta_j)_{j\in\Z}$ are then defined by
$$
\forall\;j\in\Z\,,\qquad\qquad
\Delta_ju\,:=\,\varphi(2^{-j}|D|)u\,=\,\sum_{k\in\Z}\vphi(2^{-j}|k|)\,\what u_k\,e^{ik\cdot x}\,.
$$
Notice that, for $j<0$ negative enough (in general, depending on the box $\T^d_a$), one has $\Delta_j\equiv0$. In addition, one has the following Littlewood-Paley decomposition in $\mc D'(\T)$ (see details in Section 9.1 of \cite{Triebel}):
$$
\forall\;u\in\mc{D}'(\T)\,,\qquad\qquad u\,=\,\what u_0\,+\, \sum_{j\in\Z}\Delta_ju\qquad\mbox{ in }\quad \mc D'(\T)\,.
$$
Recall that $\what u_0$ is simply the mean value of $u$ on $\T$:
\[
\what u_0\,=\,\oline u\,=\,\,\frac{1}{|\T|}\int_\T u(x)\,dx\,.
\]


We explicitly remark that, for any $j\in\Z$, the Fourier multipliers $\Delta_j$ are linear operators which are bounded on $L^p$ for any $p\in[1,+\infty]$, with norm \emph{independent} of $j$
and $p$.

Now, we present a simplified version of the classical \emph{Bernstein inequalities}, which turns out to be enough for our scopes. We refer to Chapter 2 of \cite{B-C-D}
for the statement in its full generality.

\begin{lemma} \label{l:bern}
There exists a universal constant $C>0$ depending only on the size of the torus $\T$ and on the support of the function $\vphi$ fixed above,
such that the following properties hold true:
for any $j\in\Z$,
for any $m\in\N$, for any couple $(p,q)\in[1,+\infty]^2$ such that $p\leq q$,  and for any $u\in \mc D'(\T)$,  we  have
\begin{align*}
&\left\|\Delta_ju\right\|_{L^q}\, \leq\,
 C\,2^{j\left(\frac{1}{p}-\frac{1}{q}\right)}\,\|\Delta_ju\|_{L^p} \\[1ex]
&\qquad\qquad\qquad\mbox{ and }\qquad\qquad
C^{-m-1}\,2^{-jm}\,\|\Delta_ju\|_{L^p}\,\leq\,
\|\d_x^m \Delta_ju\|_{L^p}\,\leq\,C^{m+1} \, 2^{jm}\,\|\Delta_ju\|_{L^p}\,.
\end{align*}
\end{lemma}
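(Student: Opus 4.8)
The plan is to exploit the \emph{spectral localisation} of the dyadic block $\Delta_j u$: by construction its Fourier coefficients are supported in the annulus $\mc{C}_j\,:=\,\{k\in\Z : (5/6)\,2^j\leq|k|\leq(12/5)\,2^j\}$. The whole statement then reduces to the mapping properties of convolution with smooth, frequency-localised kernels, which I would handle by Young's inequality together with a scaling computation of the kernel norms. The basic device is to fix once and for all an auxiliary even function $\wtilde\vphi\in C_c^\infty(\R\setminus\{0\})$ with $\wtilde\vphi\equiv1$ on the support of $\vphi$ and $\wtilde\vphi$ supported in a slightly larger ring; setting $\wtilde\Delta_j\,:=\,\wtilde\vphi(2^{-j}|D|)$, one has the reproducing identity $\Delta_j u=\wtilde\Delta_j\Delta_j u$, since $\wtilde\vphi\equiv1$ wherever $\vphi\neq0$. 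Every operator below is then applied to $\Delta_j u$ in this reproducing form, so that it acts as convolution with a fixed, well-localised kernel.

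For the first inequality I would write $\Delta_j u=g_j\star\Delta_j u$, where $g_j$ is the periodic kernel associated with the multiplier $\wtilde\vphi(2^{-j}|k|)$. Choosing $r\in[1,\infty]$ by $1/r=1-1/p+1/q$ (which lies in $[0,1]$ precisely because $p\leq q$), Young's convolution inequality on $\T$ gives $\|\Delta_j u\|_{L^q}\leq\|g_j\|_{L^r}\,\|\Delta_j u\|_{L^p}$. It then remains to verify the scaling bound $\|g_j\|_{L^r}\leq C\,2^{j(1-1/r)}=C\,2^{j(1/p-1/q)}$, which on the whole line follows from $g_j(x)=2^j g_0(2^j x)$ and hence $\|g_j\|_{L^r}=2^{j(1-1/r)}\|g_0\|_{L^r}$.

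For the derivative bounds I would proceed in the same spirit. For the upper bound, note that $\d_x^m\Delta_j u=\d_x^m\wtilde\Delta_j\Delta_j u$, and the symbol of $\d_x^m\wtilde\Delta_j$ can be written as $(ik)^m\,\wtilde\vphi(2^{-j}|k|)=2^{jm}\,\Theta(2^{-j}k)$ with $\Theta(\xi):=(i\xi)^m\wtilde\vphi(|\xi|)\in C_c^\infty(\R\setminus\{0\})$; the corresponding kernel therefore has $L^1$ norm bounded by $C^{m+1}2^{jm}$, and Young's inequality with $r=1$ yields $\|\d_x^m\Delta_j u\|_{L^p}\leq C^{m+1}2^{jm}\|\Delta_j u\|_{L^p}$. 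For the lower bound I would invert the derivative on the annulus: since $k\neq0$ on $\mc C_j$, the multiplier $m_j(k):=\wtilde\vphi(2^{-j}|k|)/(ik)^m=2^{-jm}\,\Xi(2^{-j}k)$, with $\Xi(\xi):=\wtilde\vphi(|\xi|)/(i\xi)^m$ smooth and supported away from the origin, satisfies $m_j(k)\,(ik)^m=\wtilde\vphi(2^{-j}|k|)$; applying the associated operator to $\d_x^m\Delta_j u$ reconstructs $\Delta_j u$ exactly, and the kernel bound $\leq C^{m+1}2^{-jm}$ in $L^1$ gives $\|\Delta_j u\|_{L^p}\leq C^{m+1}2^{-jm}\|\d_x^m\Delta_j u\|_{L^p}$, i.e. the claimed left-hand inequality.

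The hard part will be the scaling estimate of the kernel norms $\|g_j\|_{L^r}$ and their derivative analogues: on $\T$ the dilation $x\mapsto2^j x$ is not a symmetry of the domain, so the clean whole-space computation is not directly available and must be transferred to the periodic setting. This is exactly the point settled in the periodic adaptation of Littlewood--Paley theory of \cite{Danchin}: via Poisson summation one relates the periodic kernels to their whole-space counterparts and recovers the same scaling up to a uniform multiplicative constant. Finally, for $j$ below the threshold at which $\Delta_j\equiv0$ only finitely many blocks survive, and these finitely many cases are absorbed into the constant $C$, which is allowed to depend on $\T$ and on the support of $\vphi$.
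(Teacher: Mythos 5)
Your proof is correct and follows exactly the route the paper itself points to: the paper does not prove Lemma \ref{l:bern} but defers to Chapter 2 of \cite{B-C-D} and to the periodic adaptation in \cite{Danchin}, and your argument (reproducing the block via a fattened cut-off $\wtilde\vphi$, Young's inequality with $1/r=1-1/p+1/q$, inversion of $(ik)^m$ on the annulus for the lower bound, and transfer of the kernel scaling to $\T$ by Poisson summation, absorbing the finitely many surviving negative-$j$ blocks into $C$) is precisely that standard proof. No gaps to report.
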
   


It is well known that, for all $s\in\R$, the space $H^s$ can be characterised in terms of Littlewood-Paley decomposition (see Section 2.7 of \cite{B-C-D}). In particular,
one has the equivalence
\begin{equation} \label{eq:LP-Sob}
\|u\|^2_{H^s}\,\sim\,\left|\what u_0\right|^2\,+\,\sum_{j\in\Z}2^{2sj}\,\|\Delta_ju\|_{L^2}^2\,.
\end{equation}



Next, we recall some well-known interpolation inequalities. Remark that different proofs with respect to the classical ones
may be given by use of the Littlewood-Paley decomposition.
We start by recalling a classical interpolation result in Sobolev spaces:
\[ 
\forall\,f\in H^2(\T)\,, 
\qquad\qquad
\left\|\d_xf\right\|_{L^2}\,\lesssim\,\left\|f\right\|_{L^2}^{1/2}\;\left\|\dd f\right\|_{L^2}^{1/2}\,.
\] 
Moreover, we have the following special case of Gagliardo-Nirenberg inequality:
\begin{equation} \label{est:interpolation}
\forall\,f\in H^2(\T)\,, 
\qquad\qquad
\left\|\d_x f\right\|_{L^4}\,\lesssim\,\left\|f\right\|_{L^\infty}^{1/2}\;\left\|\dd f\right\|_{L^2}^{1/2}\,.
\end{equation}
Similarly, the following Gagliardo-Nirenberg type inequality also holds true:
\begin{equation} \label{est:interp_2}
 \forall\,f\in H^1(\T)\quad \mbox{ such that }\;\oline f\,=\,0\,,\qquad\qquad
\left\|f\right\|_{L^\infty}\,\lesssim\,\left\|f\right\|_{L^p}^{p/(p+2)}\;\left\|\d_xf\right\|_{L^2}^{2/(p+2)}\,,
\end{equation}
for any $p\in[1,+\infty]$.

\section{\textsl{A priori} estimates} \label{s:a-priori}

In this section, we collect useful \tsl{a priori} estimates satisfied by smooth solutions $(u,\o,k)$ to system \eqref{eq:kolm}, defined in $\R_+\times\Omega$,
where we recall that $\Omega\,=\,\T$. We will see in Section \ref{s:proof}, and more precisely in Subsection \ref{ss:local} how to use those bounds in order to prove local well-posedness at the level of regularity
claimed in Theorem \ref{t:blow-up}.

In all the estimates below, we have to put special attention on the dependence of all the estimates on the lower bound of $k$, which may vanish, under our assumptions.

\subsection{ODE analysis} \label{ss:ode}
We start by deriving pointwise upper and lower bounds for the functions $\o$ and $k$.
Those bounds have already been derived in \cite{Mie-Nau_CRAS}, \cite{Mie-Nau} by a comparison principle for weak solutions
to scalar parabolic equations. Here we propose a different method, based on an explicit ODE analysis.
Recall that, throughout this section, the solution $(u,\o,k)$ is assumed to be smooth, defined on the whole $\R_+\times\Omega$.

First of all, let us define $\lambda$ and $\mu$ as the solutions to the following system of ODEs:
\begin{equation} \label{eq:ODE}
\left\{\begin{array}{ll}
       \dfrac{d}{dt}\lambda\,=\,-\,\alpha_2\,\lambda^2\,, &\quad \lambda_{|t=0}\,=\,\lambda_0 \\[2ex]
       \dfrac{d}{dt}\mu\,=\,-\,\lambda\,\mu\,, &\quad \mu_{|t=0}\,=\,\mu_0\,,
       \end{array}
\right.
\end{equation}
for some $\lambda_0$ and $\mu_0$ to be fixed later. System \eqref{eq:ODE} can be solved explicitly: we have
$$
\lambda(t)\,=\,\frac{\lambda_0}{\lam_0\,\alpha_2\,t\,+\,1}\qquad\mbox{ and }\qquad \mu(t)\,=\,\frac{\mu_0}{\left(\lam_0\,\alpha_2\,t\,+\,1\right)^{1/\alpha_2}}\,.
$$

Next,  let us introduce the quantities 
$$
\omega^{*}\,:=\,\max_{x\in\Omega}\o_0(x)\,,\qquad \omega_{*}\,:=\,\min_{x\in\Omega}\o_0(x)\,,\qquad k_*\,:=\,\min_{x\in\Omega}k_0(x)
$$
as the initial upper and lower bounds of the functions $\o$ and $k$. We also define their counterparts at any time $t\geq0$:
$$
\o_{\min}(t)\,:=\,\min_{x\in\Omega}\o(t,x),\qquad\qquad \o_{\max}(t)\,:=\,\max_{x\in\Omega}\o(t,x) \qquad\mbox{ and }\qquad k_{\min}(t)\,:=\,\min_{x\in\Omega}k(t,x)\,.
$$
Due to the smoothness of the solutions, these functions are Lipschitz and, as a consequence of Rademacher theorem, they are differentiable almost everywhere. 

Denoting by $x_{\min}=x_{\min}(t)$ the point where the minimum value of $k(t)$ is attained, we obtain that $k_{\min}(t)$ satisfies the ODE
\begin{equation} \label{eq:d_tk}
\frac{d}{dt}k_{\min}\,=\,\alpha_3\left(\frac{k}{\o}\,\d_x^2k\right)_{\!\!|x=x_{\min}}\,-\,k_{\min}\,\o_{|x=x_{\min}}\,+\,
\alpha_4\left(\frac{k}{\o}\,\left|\d_xu\right|^2\right)_{\!\!|x=x_{\min}}\,.
\end{equation}
Using Gr\"onwall inequality we find that
$$
k_{\min}(t)\geq0.
$$

With the same ideas, and denoting by $y_{\min}=y_{\min}(t)$ the point where $\o(t)$ attains its minimum value, we find that $\o_{\min}$ satisfies the ODE
\[
\frac{d}{dt}\o_{\min}\,=\,\alpha_1\,\frac{k_{|x=y_{\min}}}{\o_{\min}}\,\left(\d_x^2\o\right)_{|x=y_{\min}}\,-\,\alpha_2\,\o_{\min}^2\,.
\]
We could see that $\o_{\min}$ remains positive by \tsl{e.g.} a continuity argument. However, we decide here to perform explicit computations and get
\[
 \frac{1}{2}\,\frac{d}{dt}\left(\o_{\min}^2\right)\,=\,\alpha_1\,k_{|x=y_{\min}}\,\left(\d_x^2\o\right)_{|x=y_{\min}}\,-\,\alpha_2\,\o_{\min}^3\,\geq\,
-\,\alpha_2\,\o_{\min}^3\,.
\]
Simple computations immediately imply that
\[ 
\o_{\min}(t)\,\geq\,\frac{\o_*}{\o_*\,\alpha_2\,t\,+\,1}\qquad\qquad \forall\;t\geq0\,.
\] 

Next, we observe that, at the point $y_{\max}=y_{\max}(t)$ which realises the maximum value of $\o(t)$, one has
$$
\frac{d}{dt}\o_{\max}\,=\,\alpha_1\left(\frac{k}{\o}\,\d_x^2\o\right)_{|x=y_{\max}}\,-\,\alpha_2\,\omega_{\max}^2\,\leq\,-\,\alpha_2\,\omega_{\max}^2\,.
$$
Hence, solving the first equation in \eqref{eq:ODE} with initial datum $\lambda_0=\omega^*$, we deduce
\[ 
\o_{\max}(t)\,\leq\,\frac{\o^*}{\o^*\,\alpha_2\,t\,+\,1}\qquad\qquad \forall\;t\geq0\,.
\] 

In the end, we get the following pointwise estimate for $\o$:
\begin{equation} \label{est:omega_L^inf}
\forall\;(t,x)\,\in\,\R_+\times\Omega\,,\quad
0\,<\,\frac{\o_*}{\o_*\,\alpha_2\,t\,+\,1}\,\leq\,\o_{\min}(t)\,\leq\,\omega(t,x)\,\leq\,\o_{\max}(t)\,\leq\,\frac{\o^*}{\o^*\,\alpha_2\,t\,+\,1}\,.
\end{equation}

Using the previous bounds \eqref{est:omega_L^inf} in equation \eqref{eq:d_tk}, in turn we find
\[
 \frac{d}{dt}k_{\min}\,\geq\,
-\,k_{\min}\,\o_{\max}\,.
\]
Solving the second equation \eqref{eq:ODE}, where we take $\lambda(t)=\o_{\max}(t)$ and $\mu_0=k_*$, we infer
\begin{equation} \label{est:k_lower}
k_{\min}(t)\,\geq\,\frac{k_*}{\left(\o^*\,\alpha_2\,t\,+\,1\right)^{\!1/\alpha_2}}\qquad\qquad \forall\;t\geq0\,.
\end{equation}
Notice that, under the assumptions of Theorem \ref{t:local}, we have $k_*=0$. Therefore, we deduce that
\begin{equation} \label{est:k-inf}
\forall\;(t,x)\in\R_+\times\Omega\,,\qquad\qquad k(t,x)\,\geq\,0\,.
\end{equation}


\subsection{Basic energy estimates} \label{ss:energy}
We now perform basic $L^2$ estimates for the smooth solution $(u,\o,k)$, defined in $\R_+\times\Omega$. Those estimates will provide us with a control on the low frequencies of the solution. The bounds for the high frequencies will be given in the next subsection.

To begin with, we test the equation for $u$ against $u$ itself, and then integrate over $\Omega$: after performing suitable integrations by parts, we find
$$
\frac{1}{2}\,\frac{d}{dt}\int_\Omega|u|^2\,dx\,+\,\nu\int_\Omega\frac{k}{\o}\,\big|\d_xu\big|^2\,dx\,=\,0\,.
$$
After an integration in time, 
we immediately deduce the following estimate: for all $t\geq 0$, one has
\begin{equation} \label{est:u}
\left\|u(t)\right\|_{L^2}^2\,+\,2\,\nu\int^t_0\int_\Omega\frac{k}{\o}\,\big|\d_xu\big|^2\,dx\,d\tau\,\leq\,\|u_0\|^2_{L^2}\,.
\end{equation}

Repeating the same computations for the second equation in \eqref{eq:kolm}, tested this time against the function $\o$, we get
\begin{equation} \label{est:d-dt_o}
\frac{1}{2}\,\frac{d}{dt}\int_\Omega|\o|^2\,dx\,+\,\alpha_1\int_\Omega\frac{k}{\o}\,\big|\d_x\o\big|^2\,dx\,+\,\alpha_2\int_\Omega\o^3\,dx\,=\,\frac{1}{2}\int_\Omega\d_xu\,\o^2\,.
\end{equation}
We have to control the last term in the right-hand side of the previous equality. There are several possibilities for that; for instance, the cheapest choice in terms of regularity consists in taking advantage of the last term on the left-hand side of \eqref{est:d-dt_o}, thus making\footnote{From now on, we will use the following notation: if $X$ is a Fr\'echet space of functions over $\Omega$, for any finite $T > 0$ we note $L^p_T(X) = L^p\big([0, T] ; X\big)$ and $C_T(X) = C\big([0, T];X\big)$.} a $L^3_T(L^3)$ norm of $\d_xu$ appear. In turn, this would be fine, in view of the higher order estimates of Subsection \ref{ss:higher} below. However,
here we opt for a simpler control and employ a $L^\infty$ bound for $\d_xu$, which will be justified from the higher order estimates of Subsection \ref{ss:higher}:
we get
\begin{align*}
\left|\int_\Omega\d_xu\,\o^2\right|\,&\leq\,
\left\|\d_xu\right\|_{L^\infty}\,\|\o\|^2_{L^2}\,.
\end{align*}
Plugging this bound into \eqref{est:d-dt_o} and integrating in time, by Gr\"onwall's lemma we get the following estimate: for any time $t\geq0$, one has
\begin{align} \label{est:omega}
\left\|\o(t)\right\|_{L^2}^2\,+\int^t_0\int_\Omega\frac{k}{\o}\,\big|\d_x\o\big|^2\,dx\,d\tau\,+\,\left\|\o\right\|^3_{L^3_t(L^3)}\,\leq\,\|\o_0\|^2_{L^2}
\,\exp\left(C\int^t_0\left\|\d_xu(\t)\right\|_{L^\infty}\,d\t\right)\,,
\end{align}
where the multiplicative constant $C>0$ depends only on the values of $\al_1$ and $\al_2$.
Recall that we dispose also of $L^\infty$ controls on $\o$, which are provided by \eqref{est:omega_L^inf}.


\medbreak
Let us turn our attention to the estimates for $k$. 
Looking at the third equation of \eqref{eq:kolm}, it is easy to realise that a $L^2$ estimate for $k$ has no chance to be closed, owing to the presence of a right-hand side which is merely $L^1$.
Therefore, we rather perform an $L^1$ estimate on $k$, obtaining, after an integration by parts, the relation
\begin{equation} \label{est:d-dt_k}
\frac{d}{dt}\int_\Omega k\,dx\,+\,\int_\Omega k\,\o\,dx\,=\,\alpha_4\int_\Omega\frac{k}{\o}\,\big|\d_xu\big|^2\,dx\,+\,\int_\Omega k\,\d_xu\,dx\,.
\end{equation}
Notice that, following the approach of \cite{Mie-Nau}, it is possible to propagate better integrability for $k$ and its derivative. However, the bounds coming from the previous relation \eqref{est:d-dt_k} are enough for our scopes. Indeed, we can write
\[
\left|\int_\Omega k\,\d_xu\,dx\right|\,=\,\left|\int_\Omega\sqrt{k\,\o}\,\sqrt{\frac{k}{\o}}\,\d_xu\,dx\right|\,\leq\,\frac{1}{2}\int_\Omega k\,\o\,dx\,+\,\frac{1}{2}\int_\Omega\frac{k}{\o}\,\big|\d_xu\big|^2\,dx\,.
\]
Therefore, by use of \eqref{est:u} and noticing that the constant may depend on the parameters $\alpha_i$ and $\nu$, from \eqref{est:d-dt_k} we deduce, after an integration in time, that for any time $t\geq0$, one has
\begin{equation} \label{est:k}
\left\|k(t)\right\|_{L^1}\,+\,\int^t_0\int_\Omega k\,\o\,dx\,d\tau\,\lesssim\,\|k_0\|_{L^1}\,+\,\|u_0\|^2_{L^2}\,,
\end{equation}
for an implicit multiplicative constant, which depends only on the parameters $\nu$ and $\alpha_4$. Observe that, owing to our assumption
on $k_0$, we have
\[ 
\|k_0\|_{L^1}\,=\,\left\|\sqrt{k_0}\right\|_{L^2}^2\,<\,+\,\infty\,.
\] 

As it will appear clear from the computation in the next subsection, we also need some control on how the function $k$ touches zero.
This piece of information is obtained by propagating regularity for $\sqrt{k}$, rather than for $k$ itself.
For notational convenience, let us introduce the quantities 
\[
\beta\,:=\,\sqrt{k}\qquad\qquad\mbox{ and }\qquad\qquad \bt_0\,:=\,\sqrt{k_0}\,.
\]

To begin with, we use again the fact that, for any $t\geq0$, one has $\left\|\bt(t)\right\|^2_{L^2}\,=\,\left\|k(t)\right\|_{L^1}$.
Therefore, owing to estimate \eqref{est:k}, we deduce, for all $t\geq0$, the bound
\begin{equation} \label{est:beta_L^2}
 \left\|\bt(t)\right\|_{L^2}\,\lesssim\,\left\|\bt_0\right\|_{L^2}\,+\,\left\|u_0\right\|_{L^2}\,.
\end{equation}

Next, in view of the computations of Subsection \ref{ss:higher} below, we derive an equation for $\beta$. For this,
we start by multiplying the third equation in \eqref{eq:kolm} by the factor $1/\left(2\,\sqrt{k}\right)$; then, straightforward manipulations yield
\begin{equation} \label{eq:beta}
\d_t\bt\,+\,u\,\d_x\bt\,-\,\al_3\,\d_x\left(\frac{\bt^2}{\o}\,\d_x\bt\right)\,+\,\frac{1}{2}\,\bt\,\o\,=\,
\frac{\alpha_4}{2}\,\frac{\bt}{\o}\,\left|\d_xu\right|^2\,+\,\alpha_3\,\frac{\bt}{\o}\,\left|\d_x\bt\right|^2\,.
\end{equation}
To conclude this part, we also recast the first and second equations of \eqref{eq:kolm} in terms of the new set of unknowns $\big(u,\o,\bt\big)$.
We get the new system
\begin{equation} \label{eq:kolm_2}
\left\{\begin{array}{l}
       \d_tu\,+\,u\,\d_xu\,-\,\nu\,\d_x\left(\dfrac{\bt^2}{\o}\,\d_xu\right)\,=\,0 \\[1ex]
       \d_t\o\,+\,u\,\d_x\o\,-\,\alpha_1\,\d_x\left(\dfrac{\bt^2}{\o}\,\d_x\o\right)\,=\,-\,\alpha_2\,\o^2 \\[1ex]
        \d_t\bt\,+\,u\,\d_x\bt\,-\,\al_3\,\d_x\left(\dfrac{\bt^2}{\o}\,\d_x\bt\right)\,+\,\dfrac{1}{2}\,\bt\,\o\,=\,
\dfrac{\alpha_4}{2}\,\dfrac{\bt}{\o}\,\left|\d_xu\right|^2\,+\,\alpha_3\,\dfrac{\bt}{\o}\,\left|\d_x\bt\right|^2\,.
       \end{array}
\right.
\end{equation}
The higher order estimates of the next subsection will be performed on the previous system \eqref{eq:kolm_2}. As a matter of fact,
it is much more natural to work with the unknown $\bt\,=\,\sqrt{k}$ rather than with $k$ itself.

\medbreak
Before moving on, we notice that the basic $L^2$ parabolic estimates leading to a smoothing effect for the solution are not closed yet, because one still needs to control the last term appearing
in the right-hand side of \eqref{est:omega}. This is a purely $1$-D effect, since now it is no more meaningful to assume the velocity field $u$ to be
divergence-free. 

Of course, those estimates will be closed by using higher order bounds on $\big(u,\o,\bt\big)$, which are the matter of the next subsection.

\subsection{Higher order estimates} \label{ss:higher}

Here, we perform higher order Sobolev estimates for the smooth solution $(u,\o,\bt)$, defined on $\R_+\times\Omega$. In the end, we will need to close the estimates in some (possibly small) time interval $[0,T]$. 
Let us recall that we limit ourselves to treat the case of solutions at $H^2$ level of regularity; it goes without saying that the same method applies also to give
propagation of $H^m$ norms, for any $m\geq 2$.
We expect an extension to almost critical $H^s$ regularities, for any $s>3/2$ (or even to $B^{3/2}_{2,1}$ regularity), to be also possible, by use of Littlewood-Paley analysis and paradifferential calculus techniques. However,
our goal here is to give a simple proof by elementary methods, and to put in evidence in a clear way the special structure of the system, for which the new quantity
$\bt\,=\,\sqrt{k}$ plays a key role.

Finally, we stress that, once again, particular care has to be used while dealing with the function $k$, owing to possible regions where this quantity vanishes.

\medbreak
Before starting the estimates, let us observe that, for any $p\in[1,+\infty]$ and any $f\in L^p(\Omega)$, we have
\[
\left|\what f_0\right|\,\lesssim\,\|f\|_{L^1}\,\lesssim\,\|f\|_{L^p}\,.
\]
Thus, owing to the Bernstein inequalities of Lemma \ref{l:bern} and the Littlewood-Paley characterisation \eqref{eq:LP-Sob} of Sobolev spaces, we have
\begin{equation*}
\|u\|_{H^2}^2\,\sim\,\|u\|_{L^2}^2\,+\,\left\|\d^2_{x}u\right\|_{L^2}^2\,, 
\end{equation*}
and analogous equivalences for $\o$ and $\bt$.
Therefore, we only need to bound the second order derivatives of the solution in $L^2$. We will do this in an elementary way, by performing simple energy estimates on the equations satisfied by $\dd u$,
$\dd \o$ and $\dd \bt$.

\medbreak
We start by writing an equation for $\dd u$. Differentiating the first equation in \eqref{eq:kolm_2} twice with respect to the space variable, we find
\begin{equation} \label{eq:ddu}
\d_t\dd u\,+\,u\,\d_x\dd u\,-\,\nu\,\d_x\left(\frac{\bt^2}{\o}\,\d_x^3u\right)\,=\,f\,,
\end{equation}
where we have defined
\[
f\,=\,\sum_{j=1}^4f_j:=\,-\,3\,\d_xu\,\dd u\,+\,3\,\nu\,\dd\left(\frac{\bt^2}{\o}\right)\,\dd u\,+\,2\,\nu\,\d_x\left(\frac{\bt^2}{\o}\right)\,\d_x^3u\,+\,\nu\,\d_x^3\left(\frac{\bt^2}{\o}\right)\,\d_xu\,.
\]
It is not hard to see that the last term appearing in the definition of $f$ is the most dangerous one, when performing energy estimates on $\dd u$.

Likewise, we get an equation for $\dd\o$:
\begin{equation} \label{eq:dd-omega}
\d_t\dd\o\,+\,u\,\d_x\dd\o\,-\,\al_1\,\d_x\left(\frac{\bt^2}{\o}\,\d_x^3\o\right)\,+\,2\,\al_2\,\o\,\dd\o\,=\,g\,,
\end{equation}
where this time the external force $g$ is defined by
\begin{align*}
g\,=\,\sum_{j=1}^6g_j\,&:=\,-\,2\,\d_xu\,\dd\o\,-\,\dd u\,\d_x\o \\
&\qquad +\,3\,\al_1\,\dd\left(\frac{\bt^2}{\o}\right)\,\dd\o\,+\,2\,\al_1\,\d_x\left(\frac{\bt^2}{\o}\right)\,\d_x^3\o\,+\,\al_1\,\d_x^3\left(\frac{\bt^2}{\o}\right)\,\d_x\o\,-\,2\,\al_2\,\left(\d_x\o\right)^2\,.
\end{align*}

 Finally, we repeat the same computations for $\beta$, finding
\begin{equation} \label{eq:dd-k}
\d_t\dd \bt\,+\,u\,\d_x\dd \bt\,-\,\al_3\,\d_x\left(\frac{\bt^2}{\o}\,\d_x^3\bt\right)\,+\,\frac{1}{2}\,\o\,\dd \bt\,=\,h\,,
\end{equation}
where we have denoted by $h$ the quantity
\begin{align*}
h\,=\,\sum_{j=1}^{15}h_j\,&:=\,-\,2\,\d_xu\,\dd \bt\,-\,\dd u\,\d_x\bt\,-\,\d_x\o\,\d_x\bt\,-\,\dfrac{1}{2}\,\bt\,\dd\o \\
&\quad +\,3\,\al_3\,\dd\left(\frac{\bt^2}{\o}\right)\,\dd \bt\,+\,2\,\al_3\,\d_x\left(\frac{\bt^2}{\o}\right)\,\d_x^3\bt\,+\,\al_3\,\d_x^3\left(\frac{\bt^2}{\o}\right)\,\d_x\bt \\
&\quad +\,\dfrac{\al_4}{2}\,\dd\left(\frac{\bt}{\o}\right)\,\big|\d_xu\big|^2\,+\,2\,\al_4\,\d_x\left(\frac{\bt}{\o}\right)\,\d_xu\,\dd u\,+\,\al_4\,\frac{\bt}{\o}\,\left(\dd u\right)^2\,+\,\al_4\,\frac{\bt}{\o}\,\d_xu\,\d_x^3u \\
&\quad +\,\al_3\dd\left(\frac{\bt}{\o}\right)\,\big|\d_x\bt\big|^2\,+\,2\,\al_4\d_x\left(\frac{\bt}{\o}\right)\,\d_x\bt\,\dd\bt\,+\,2\,\al_4\frac{\bt}{\o}\,\left(\dd\bt\right)^2\,+\,2\,\al_4\frac{\bt}{\o}\,\d_x\bt\,\d_x^3\bt\,.
\end{align*}

In the next paragraphs, we will perform $L^2$ estimates in \eqref{eq:ddu}, \eqref{eq:dd-omega} and \eqref{eq:dd-k}. Of course, the key point will be the control of the forcing terms $f$, $g$ and $h$
in the energy space. 

\subsubsection{The $H^2$ estimate for $u$} \label{sss:u}

We start by testing equation \eqref{eq:ddu}: simple computations give us
\begin{equation} \label{est:dd-u_part}
 \frac{1}{2}\,\frac{d}{dt}\left\|\dd u\right\|_{L^2}^2\,+\,\nu\int_\Omega\frac{\bt^2}{\o}\,\left|\d_x^3u\right|^2\,dx\,=\,\frac{1}{2}\int_\Omega\d_xu\,\big(\dd u\big)^2\,dx\,+\,\int_\Omega f\,\dd u\,dx\,.
\end{equation}
The core of the argument consists in finding suitable bounds for the terms on the right-hand side. Notice that, owing to the fact that we do not dispose of a strictly positive lower bound for $k$, the contribution of the viscosity term may appear of no importance. Yet, the piece of information coming from that term turn out to be essential for closing the estimates. 

Thus, let us bound the terms appearing on the right-hand side of the previous relation. We start by observing that
\begin{equation} \label{est:transp_u}
\left|\int_\Omega\d_xu\,(\dd u)^2\,dx\right|\,\lesssim\,\left\|\d_xu\right\|_{L^\infty}\,\left\|\dd u\right\|_{L^2}^2\,,
\end{equation}
which of course will be absobed by a Gr\"onwall-type argument.

It goes without saying that the control of the integral involving $f_1$ works absolutely in the same way. Thus, let us turn our attention to the integral involving $f_2$: after an integration by parts, this term can be coupled with the one coming from $f_3$, thus getting
\begin{align*}
\int_{\Omega}\left(f_2\,+\,f_3\right)\,\dd u\,dx\,&=\,-\,4\,\nu\int_\Omega\d_x\left(\frac{\bt^2}{\o}\right)\,\dd u\,\d_x^3u\,dx\,. 
\end{align*}
By computing
\[
 \d_x\left(\frac{\bt^2}{\o}\right)\,=\,2\,\frac{\bt}{\o}\,\d_x\bt\,-\,\frac{\bt^2}{\o^2}\,\d_x\o\,,
\]
in the end we have to bound two integrals. We start by writing
\[
\int_\Omega\frac{\bt}{\o}\,\d_x\bt\,\dd u\,\d_x^3u\,dx\,=\,\int_\Omega\frac{1}{\sqrt\o}\,\d_x\bt\,\dd u\,\frac{\bt}{\sqrt{\o}}\,\d_x^3u\,dx\,,
\]
and an application of Cauchy-Schwarz and Young inequalities gives us, for any $\de\in\,]0,1[\,$ to be chosen later, the estimate
\begin{align*}
\nu\left|\int_\Omega\frac{\bt}{\o}\,\d_x\bt\,\dd u\,\d_x^3u\,dx\right|\,\leq\,\de\,\nu\int_\Omega\frac{\bt^2}{\o}\,\left|\d_x^3u\right|^2\,dx\,+\,C(\de,\nu)\int_\Omega\frac{1}{\o}\,\left|\d_x\bt\right|^2\,\left|\dd u\right|^2\,dx\,.
\end{align*}
Similarly, we can control
\begin{align*}
\nu\left|\int_\Omega\frac{\bt^2}{\o^2}\,\d_x\o\,\dd u\,\d_x^3u\,dx\right|\,\leq\,\de\,\nu\int_\Omega\frac{\bt^2}{\o}\,\left|\d_x^3u\right|^2\,dx\,+\,C(\de,\nu)\int_\Omega\frac{\bt^2}{\o^3}\,\left|\d_x\o\right|^2\,\left|\dd u\right|^2\,dx\,.
\end{align*}
Therefore, using also \eqref{est:omega_L^inf}, in the end we get
\begin{align}
\left|\int_{\Omega}\left(f_2\,+\,f_3\right)\,\dd u\,dx\right|\,&\leq\,2\de\,\nu\int_\Omega\frac{\bt^2}{\o}\,\left|\d_x^3u\right|^2\,dx  \label{est:f_2-3} \\
&\qquad\,+\,C\,(1+t)^3\left(\left\|\d_x\bt\right\|^2_{L^\infty}\,+\,\|\bt\|^2_{L^\infty}\,\left\|\d_x\o\right\|_{L^\infty}^2\right)\,\left\|\dd u\right\|^2_{L^2}\,, \nonumber
\end{align}
where the value of $\de\in\,]0,1[\,$ will be fixed later on, and the constant $C>0$ depends only on
the values of $\de$ and of the various coefficients $(\nu,\al_1\ldots\al_4)$.
Of course, for $\de$ small enough, the first term on the right can be absorbed into the left-hand side of \eqref{est:dd-u_part}, whereas the second term can be controlled by the Gr\"onwall lemma.

It remains us to control the term depending on $f_4$. For this, we integrate by parts once, to get
\[
\int_\Omega f_4\,\dd u\,dx\,=\,-\nu\int_{\Omega}\dd\left(\frac{\bt^2}{\o}\right)\,\left|\dd u\right|^2\,dx\,-\,\nu\int_\Omega\dd\left(\frac{\bt^2}{\o}\right)\,\d_xu\,\d_x^3u\,dx\,.
\]
Observe that the first term on the right is analogous to the one coming from $f_2$, hence it can be controlled as we have done before (by computing a further integration by parts).
On the other hand, for bounding the second term in the right-hand side of the previous equation, we compute explicitly the second order derivative of the function $\bt^2/\o$: we get
\begin{equation} \label{eq:dd_k-o}
\dd\left(\frac{\bt^2}{\o}\right)\,=\,\frac{2}{\o}\,\big(\d_x\bt\big)^2\,+\,2\,\frac{\bt}{\o}\,\dd\bt\,-\,4\,\frac{\bt}{\o^2}\,\d_x\bt\,\d_x\o\,-\,\frac{\bt^2}{\o^2}\,\dd\o\,+\,2\,\frac{\bt^2}{\o^3}\,\left(\d_x\o\right)^2\,.
\end{equation}
So, we are reconducted to estimating five different integrals. Since the third-order term $\d_x^3u$ appears, the idea is to make a coefficient $\bt/\sqrt{\o}$ appear in front of it, as we have done for \eqref{est:f_2-3} above. To begin with, we estimate
\begin{align*}
\nu\,\left|\int_\Omega\frac{\bt}{\o}\,\dd \bt\,\d_xu\,\d_x^3u\,dx\right|\,&\leq\,\de\,\nu\int_\Omega\frac{\bt^2}{\o}\,\left|\d_x^3u\right|^2\,dx\,+\,C(\de,\nu)\int_\Omega\frac{1}{\o}\,\left|\d_xu\right|^2\,\left|\dd \bt\right|^2\,dx \\
\nu\,\left|\int_\Omega\frac{\bt^2}{\o^2}\,\dd \o\,\d_xu\,\d_x^3u\,dx\right|\,&\leq\,\de\,\nu\int_\Omega\frac{\bt^2}{\o}\,\left|\d_x^3u\right|^2\,dx\,+\,C(\de,\nu)\int_\Omega\frac{\bt^2}{\o^3}\,\left|\d_xu\right|^2\,\left|\dd \o\right|^2\,dx\,, 
\end{align*}
where $\de>0$ is as above. 
The same method would apply also to the third item in \eqref{eq:dd_k-o}, but this would give a bound which is not optimal. For this reason, we prefer to integrate
by parts once and write
\begin{align} \label{eq:second-term}
&\int_\Omega\frac{\bt}{\o^2}\,\d_x\bt\,\d_x\o\,\d_xu\,\d_x^3u\,dx \\
&\qquad\qquad\qquad =\,-\int_\Omega\frac{1}{\o^2}\,\big(\d_x\bt\big)^2\,\dd u\,\d_xu\,\d_x\o\,dx\,+\,2
\int_\Omega\frac{\bt}{\o^3}\,\d_x\bt\,\big(\d_x \o\big)^2\,\dd u\,\d_xu\,dx \nonumber \\
&\qquad\qquad\qquad\qquad\qquad -\,\int_\Omega\dfrac{\bt}{\o^2}\,\left(\dd \bt\,\d_x\o\,\d_xu\,+\,\d_x\bt\,\d_x\o\,\dd u\,+\,\d_x\bt\,\dd\o\,\d_xu\right)\,\dd u\,dx\,. \nonumber
\end{align}
All these terms can be controlled in a simple way. For instance, using also \eqref{est:omega_L^inf} and the interpolation inequality
\eqref{est:interpolation}, for the first term we have
\begin{align*}
\left|\int_\Omega\frac{1}{\o^2}\,\big(\d_x\bt\big)^2\,\dd u\,\d_xu\,\d_x\o\,dx\right|\,&\lesssim\,\frac{1}{\big(\o_{\min}(t)\big)^2}\,\left\|\dd u\right\|_{L^2}\,\left\|\d_xu\right\|_{L^\infty}\,\left\|\d_x\o\right\|_{L^\infty}\,\left\|\d_x\bt\right\|^2_{L^4} \\
&\lesssim\,(1+t)^2\,\left\|\left(\dd u\,,\,\dd\bt\right)\right\|^2_{L^2}\,\left\|\d_xu\right\|_{L^\infty}\,\|\bt\|_{L^\infty}\,
\left\|\d_x\o\right\|_{L^\infty}\,.
\end{align*}
Similarly, we can bound the second term as follows:
\begin{align*}
\left|\int_\Omega\frac{\bt}{\o^3}\,\d_x\bt\,\big(\d_x \o\big)^2\,\dd u\,\d_xu\,dx\right|\,&\lesssim\,\frac{1}{\big(\o_{\min}(t)\big)^3}\,\|\bt\|_{L^\infty}\,\left\|\dd u\right\|_{L^2}\,\left\|\d_xu\right\|_{L^\infty}\,\left\|\d_x\bt\right\|_{L^\infty}\,\left\|\d_x\o\right\|^2_{L^4} \\
&\lesssim\,\frac{\o_{\max}(t)}{\left(\o_{\min}(t)\right)^3}\,\|\bt\|_{L^\infty}\,\left\|\left(\dd u\,,\,\dd\o\right)\right\|^2_{L^2}\,\left\|\d_xu\right\|_{L^\infty}\,\left\|\d_x\bt\right\|_{L^\infty} \\
&\lesssim\,(1+t)^2\,\|\bt\|_{L^\infty}\,\left\|\left(\dd u\,,\,\dd\o\right)\right\|^2_{L^2}\,\left\|\d_xu\right\|_{L^\infty}\,\left\|\d_x\bt\right\|_{L^\infty}\,.
\end{align*}
The control of the terms in the third line of \eqref{eq:second-term} is even more straightforward. All in all, we finally get
\begin{align*}
\left|\int_\Omega\frac{\bt}{\o^2}\,\d_x\bt\,\d_x\o\,\d_xu\,\d_x^3u\,dx\right|\,\lesssim\,(1+t)^2\,\|\bt\|_{L^\infty}\,
\left\|\left(\d_xu\,,\,\d_x\o\,,\,\d_x\bt\right)\right\|^2_{L^\infty}\,\left\|\left(\dd u\,,\,\dd\o\,,\,\dd\bt\right)\right\|^2_{L^2}\,.
\end{align*}
Finally, the terms coming from the first and fifth items of \eqref{eq:dd_k-o} can be treated as the previous one, after an integration by parts: we obtain
\begin{align*}
\left|\int_\Omega\frac{1}{\o}\,\big(\d_x\bt\big)^2\,\d_xu\,\d_x^3u dx\right|\,&\lesssim\,(1+t)^2\,\left(1\,+\,\|\bt\|_{L^\infty}\right)\,
\left\|\left(\d_xu\,,\,\d_x\o\,,\,\d_x\bt\right)\right\|^2_{L^\infty}\,\left\|\left(\dd u\,,\,\dd\bt\right)\right\|^2_{L^2} \\
\left|\int_\Omega\frac{\bt^2}{\o^3}\,\big(\d_x\o\big)^2\,\d_xu\,\d_x^3u dx\right|\,&\lesssim\,(1+t)^3\,\left(1\,+\,\|\bt\|^2_{L^\infty}\right)\,
\left\|\left(\d_xu\,,\,\d_x\o\,,\,\d_x\bt\right)\right\|^2_{L^\infty}\,\left\|\left(\dd u\,,\,\dd\o\right)\right\|^2_{L^2} \,.
\end{align*}
Putting all those inequalities together, we finally deduce the sought control for the $f_4$ term:
\begin{align}
\left|\int_{\Omega}f_4\,\dd u\,dx\right|\,&\leq\,C_1\,\de\,\nu\int_\Omega\frac{\bt^2}{\o}\,\left|\d_x^3u\right|^2\,dx  \label{est:f_4} \\
&\quad\,+\,C_2\,(1+t)^3\left(1+\|\bt\|^2_{L^\infty}\right)\,\left\|\left(\d_xu,\d_x\o,\d_x\bt\right)\right\|^2_{L^\infty}\,\left\|\left(\dd u,\dd\o,\dd\bt\right)\right\|^2_{L^2}\,, \nonumber
\end{align}
where the value of $\de\in\,]0,1[\,$ will be fixed later on, the constant $C_1>0$ is ``universal'' (independent of all the data and parameters appearing in the equations) and the constant $C_2>0$ depends only on
the values of $\de$ and of the various coefficients $(\nu,\al_1\ldots\al_4)$.

In the end, inserting the bounds we have obtained in \eqref{est:transp_u}, \eqref{est:f_2-3} and \eqref{est:f_4} into \eqref{est:dd-u_part} and choosing $\de>0$ small enough, we find a bound for $\dd u$:
\begin{align} 
 &\frac{1}{2}\,\frac{d}{dt}\left\|\dd u\right\|_{L^2}^2\,+\,\nu\int_\Omega\frac{\bt^2}{\o}\,\left|\d_x^3u\right|^2\,dx \label{est:dd-u} \\
&\qquad\qquad\qquad
\lesssim\,(1+t)^3\left(1+\|\bt\|^2_{L^\infty}\right)\,\left(1\,+\,\left\|\left(\d_xu,\d_x\o,\d_x\bt\right)\right\|^2_{L^\infty}\right)\,\left\|\left(\dd u,\dd\o,\dd \bt\right)\right\|^2_{L^2}
 \,. \nonumber
\end{align}

\subsubsection{The $H^2$ estimate for $\o$} \label{sss:omega}

We now perform $H^2$ estimates on $\o$. For this, it is enough to perform $L^2$ estimates on equation \eqref{eq:dd-omega} for $\dd\o$. In this way, we easily get
\begin{align*} 
 \frac{1}{2}\,\frac{d}{dt}\left\|\dd \o\right\|_{L^2}^2+\al_1\int_\Omega\frac{\bt^2}{\o}\,\left|\d_x^3\o\right|^2\,dx+2\al_2\int_\Omega\o\,\left|\dd\o\right|^2\,dx\,=\,\frac{1}{2}\int_\Omega\d_xu\,\big(\dd \o\big)^2\,dx\,+\,\int_\Omega g\,\dd\o\,dx\,.
\end{align*}

In the estimate of the right-hand side of the equation above, only few terms change with respect to the ones we have already treated in the previous paragraph. First of all, we put together the transport term, \tsl{i.e.} the first term appearing on the right, with $g_1$: we find
\[
\left|\int_\Omega\left(\frac12\,\d_xu\,-\,2\,\d_xu\right)\,\left|\dd\o\right|^2\,dx\right|\,\lesssim\,\left\|\d_xu\right\|_{L^\infty}\,\left\|\dd\o\right\|_{L^2}^2\,.
\]

Next, we easily see that the control of the terms $g_3$, $g_4$ qand $g_5$ is analogous to the one of $f_2$, $f_3$ and $f_4$ respctively, up to change $u$ by $\o$ and $\nu$ by $\al_1$ everywhere in those computations. We infer the bound
\begin{align*}
\left|\int_\Omega\big(g_3+g_4+g_5\big)\,\dd\o\,dx\right|\,&\leq\,C_1\,\de\,\alpha_1\int_\Omega\frac{\bt^2}{\o}\,\left|\d_x^3\o\right|^2\,dx \\
&\;+\,C_2\,(1+t)^3\left(1+\|\bt\|^2_{L^\infty}\right)\,\left\|\left(\d_x\o,\d_x\bt\right)\right\|^2_{L^\infty}\,\left\|\left(\dd\o,\dd\bt\right)\right\|^2_{L^2}\,,
\end{align*}
where $\de\in\,]0,1[$ can be taken arbitrarily small and the constants $C_1>0$ and $C_2>0$ are as in \eqref{est:f_4}.

Thus, it remains us to bound the terms coming from $g_2$ and $g_6$; however, both controls are fairly straightforward. As a matter of fact, we have
\begin{align*}
\left|\int_\Omega g_2\,\dd\o\,dx\right|\,&\lesssim\,\left\|\d_x\o\right\|_{L^\infty}\,\left\|\left(\dd u,\dd\o\right)\right\|_{L^2}^2 \\
\left|\int_\Omega g_6\,\dd\o\,dx\right|\,&\lesssim\,\left\|\d_x\o\right\|_{L^4}^2\,\left\|\dd\o\right\|_{L^2}\;\lesssim\;\o_{\max}(t)\,\left\|\dd\o\right\|_{L^2}^2\,,
\end{align*}
where, in the last line, we have also used the interpolation inequality \eqref{est:interpolation}.

In the end, we have obtained the following bound, analogous to \eqref{est:dd-u}:
\begin{align} 
 &\frac{1}{2}\,\frac{d}{dt}\left\|\dd\o\right\|_{L^2}^2\,+\,\al_1\int_\Omega\frac{\bt^2}{\o}\,\left|\d_x^3\o\right|^2\,dx\,+\,2\,\al_2\int_\Omega\o\,\left|\dd\o\right|^2\,dx \label{est:dd-o} \\
&\qquad\qquad\lesssim\,(1+t)^3\left(1+\|\bt\|^2_{L^\infty}\right)\,\left(1\,+\,\left\|\left(\d_xu,\d_x\o,\d_x\bt\right)\right\|^2_{L^\infty}\right)\,\left\|\left(\dd u,\dd\o,\dd\bt\right)\right\|^2_{L^2}
 \,. \nonumber
\end{align}

\subsubsection{The $H^2$ estimate for $\bt$} \label{sss:k}

Let us switch to the control of $\dd \beta$. Once again, we start with an energy estimate for equation \eqref{eq:dd-k}, thus finding
\begin{align*} 
 \frac{1}{2}\,\frac{d}{dt}\left\|\dd\bt\right\|_{L^2}^2+\al_3\int_\Omega\frac{\bt^2}{\o}\,\left|\d_x^3\bt\right|^2\,dx+\,\frac{1}{2}\int_\Omega\o\,\left|\dd\bt\right|^2\,dx\,=\,\frac{1}{2}\int_\Omega\d_xu\,\big(\dd\bt\big)^2\,dx\,+\,\int_\Omega h\,\dd\bt\,dx\,.
\end{align*}

As before, the transport term has to be joined with $h_1$: this gives us
\[
\left|\int_\Omega\left(\frac12\,\d_xu\,-\,2\,\d_xu\right)\,\left|\dd\bt\right|^2\,dx\right|\,\lesssim\,\left\|\d_xu\right\|_{L^\infty}\,\left\|\dd\bt\right\|_{L^2}^2\,.
\]

On the other hand, the group of terms $h_5$, $h_6$ and $h_7$ have to be handled in the same way as $f_2$, $f_3$ and $f_4$ above. \tsl{Mutatis mutandis}, we gather the control
\begin{align*}
\left|\int_\Omega\big(h_5+h_6+h_7\big)\,\dd\bt\,dx\right|\,&\leq\,C_1\,\de\,\alpha_3\int_\Omega\frac{\bt^2}{\o}\,\left|\d_x^3\bt\right|^2\,dx \\
&\qquad
+\,C_2\,(1+t)^3\left(1+\|\bt\|^2_{L^\infty}\right)\,\left\|\left(\d_x\o,\d_x\bt\right)\right\|^2_{L^\infty}\,\left\|\left(\dd\o,\dd\bt\right)\right\|^2_{L^2}\,.
\end{align*}
The previous bound holds true for any $\de\in\,]0,1[\,$; the precise value of $\de$ will be fixed later on. In addition, the positive constants $C_1$
and $C_2$ satisfy the same properties as in \eqref{est:f_4} above.

Next, arguing as for $g_6$ and using the interpolation inequality \eqref{est:interpolation}, we easily get
\begin{align*}
\left|\int_\Omega \left(h_2\,+\,h_4\right)\,\dd\bt\,dx\right|\,&\lesssim\,\left(\left\|\d_x\bt\right\|_{L^\infty}\,+\,\|\bt\|_{L^\infty}\right)\,\left\|\left(\dd u,\dd\o,\dd\bt\right)\right\|_{L^2}^2 \\
\left|\int_\Omega h_3\,\dd\bt\,dx\right|\,&\lesssim\,\left\|\d_x\o\right\|_{L^4}\,\left\|\d_x\bt\right\|_{L^4}\,\left\|\dd\bt\right\|_{L^2}\;\lesssim\;\left(\o_{\max}(t)\,+\,\|\bt\|_{L^\infty}\right)\,\left\|\left(\dd\o,\dd\bt\right)\right\|_{L^2}^2\,.
\end{align*}

Hence, it remains us to bound the terms appearing in the last two lines of the definition of $h$. We start from $h_{11}$: using the usual trick based on Cauchy-Schwarz and Young inequalities, for $\de\in\,]0,1[\,$ as above, we bound
\begin{align*}
\left|\int_\Omega h_{11}\,\dd\bt\,dx\right|\,&\leq\,\de\,\nu\int_\Omega\frac{\bt^2}{\o}\,\left|\d_x^3u\right|^2\,dx\,+\,C(\de,\nu,\al_4)\int_\Omega \frac{1}{\o}\,\left|\d_xu\right|^2\,\left|\dd\bt\right|^2\,dx \\
&\leq\,\de\,\nu\int_\Omega\frac{\bt^2}{\o}\,\left|\d_x^3u\right|^2\,dx\,+\,C\,(1+t)\,\left\|\d_xu\right\|^2_{L^\infty}\,\left\|\dd\bt\right\|^2_{L^2}\,.
\end{align*}
Next, we consider $h_{10}$: for it, we can use an integration by parts to write
\[
\int_\Omega h_{10}\,\dd\bt\,dx\,=\,-\,2\,\al_4\int_\Omega\frac{\bt}{\o}\,\dd u\,\d_x^3u\,\d_x\bt\,dx\,-\,
\al_4\int_\Omega\d_x\left(\frac{\bt}{\o}\right)\,\left(\dd u\right)^2\,\d_x\bt\,dx\,.
\]
It is easily seen that the first term on the right-hand side can be handled in the same way as for $h_{11}$, thus producing
\[
\alpha_4\,\left|\int_\Omega\frac{\bt}{\o}\,\dd u\,\d_x^3u\,\d_x\bt\,dx\right|\,\leq\,\de\,\nu\int_\Omega\frac{\bt^2}{\o}\,\left|\d_x^3u\right|^2\,dx\,+\,C(\de,\nu,\alpha_4)\,(1+t)\,\left\|\d_x\bt\right\|^2_{L^\infty}\,\left\|\dd u\right\|^2_{L^2}\,,
\]
whereas for the second term one can explicitly compute the space derivative of $\bt/\o$ and obtain
\begin{align*}
\left|\int_\Omega\d_x\left(\frac{\bt}{\o}\right)\,\left(\dd u\right)^2\,\d_x\bt\,dx\right|\,\lesssim\,(1+t)^2\,\left(1+\|\bt\|_{L^\infty}\right)\,\left\|\left(\d_x\o,\d_x\bt\right)\right\|^2_{L^\infty}\,\left\|\dd u\right\|_{L^2}^2\,.
\end{align*}
The control of $h_9$ works in a pretty similar way: after computing $\d_x(\bt/\o)$, one easily gets
\[
\left|\int_\Omega h_9\,\dd\bt\,dx\right|\,\lesssim\,(1+t)^2\,\left(1\,+\,\|\bt\|_{L^\infty}\right)\,\left\|\left(\d_xu,\d_x\o,\d_x\bt\right)\right\|^2_{L^\infty}\,\left\|\left(\dd u,\dd \bt\right)\right\|_{L^2}^2\,.
\]
Next, in order to estimate the term presenting $h_8$, we compute 
\[
\d_x^2\left(\frac{\bt}{\o}\right)\,=\,\frac{1}{\o}\,\dd\bt\,-\,\frac{2}{\o^2}\,\d_x\bt\,\d_x\o\,-\,\frac{\bt}{\o^2}\,\dd\o\,+\,2\,\frac{\bt}{\o^3}\,\big(\d_x\o\big)^2
\]
and reduce our problem to bound four different integrals. For this, we are going to argue in a very similar way as done for estimating the terms appearing in \eqref{eq:second-term}. On the one hand, we have
\begin{align*}
\left|\int_\Omega\frac{1}{\o}\,\left|\d_xu\right|^2\,\left(\dd\bt\right)^2 \,dx\right|\,&\lesssim\,
(1+t)\,\left\|\d_xu\right\|^2_{L^\infty}\,\left\|\dd\bt\right\|^2_{L^2} \\
\left|\int_\Omega\frac{\bt}{\o^2}\,\dd\o\,\left|\d_xu\right|^2\,\dd\bt\,dx\right|\,&\lesssim\,
(1+t)^2\,\|\bt\|_{L^\infty}\,\left\|\d_xu\right\|^2_{L^\infty}\,\left\|\left(\dd\o,\dd\bt\right)\right\|^2_{L^2}\,.
\end{align*}
On the other hand, for the terms presenting a mixing of first order derivatives, we use the interpolation inequality \eqref{est:interpolation} to deduce
\begin{align*}
&\left|\int_\Omega\left(\frac{1}{\o^2}\,\d_x\bt\,\d_x\o+\frac{\bt}{\o^3}\left(\d_x\o\right)^2\right)\left|\d_xu\right|^2\dd\bt\,dx\right|\,\lesssim\,(1+t)^2\left(1+\|\bt\|_{L^\infty}\right)\left\|\d_xu\right\|_{L^\infty}^2\left\|\left(\dd\o,\dd\bt\right)\right\|^2_{L^2}.
\end{align*}

Finally, we see that the terms arising from the last line of the definition of $h$ can be handled as $h_8\ldots h_{11}$, up to replace $u$ by $\bt$
and $\alpha_4$ by $\alpha_3$ everywhere in the computations.

In the end, putting all those inequalities together and choosing $\de>0$ small enough, we arrive at the following estimate for $\bt$:
\begin{align} 
 &\frac{1}{2}\,\frac{d}{dt}\left\|\dd\bt\right\|_{L^2}^2\,+\,\al_3\int_\Omega\frac{\bt^2}{\o}\,\left|\d_x^3\bt\right|^2\,dx\,+\,\int_\Omega\o\,\left|\dd\bt\right|^2\,dx \label{est:dd-k} \\
&\qquad\qquad\qquad\lesssim\,(1+t)^3\left(1+\|\bt\|^2_{L^\infty}\right)\,\left(1+\left\|\left(\d_xu,\d_x\o,\d_x\bt\right)\right\|^2_{L^\infty}\right)\,\left\|\left(\dd u,\dd\o,\dd\bt\right)\right\|^2_{L^2}\,, \nonumber
\end{align}
for an implicit multiplicative constant $C>0$, depending only on the chosen value of $\de>0$ and on
the various parameters $\big(\nu,\al_1\ldots \al_4\big)$.

\subsection{Closing the estimates} \label{ss:closing}

In order to close the estimates in some small time interval $[0,T]$, we introduce some convenient notations. For all $t\geq0$, we set
\[
E(t)\,:=\,\left\|u(t)\right\|_{H^2}^2\,+\,\left\|\o(t)\right\|_{H^2}^2\,+\,\left\|\bt(t)\right\|_{H^2}^2
\]
to be the energy of the solution. Our goal is to bound it in terms of $E(0)$, which is defined as the same quantity, computed on the initial datum $\big(u_0,\o_0,\bt_0\big)$. We also define
\begin{align*}
 E_0(t)\,&:=\,\left\|u(t)\right\|_{L^2}^2\,+\,\left\|\o(t)\right\|_{L^2}^2\,+\,\left\|\bt(t)\right\|_{L^2}^2 \\ 
 E_2(t)\,&:=\,\left\|\dd u(t)\right\|_{L^2}^2\,+\,\left\|\dd\o(t)\right\|_{L^2}^2\,+\,\left\|\dd \bt(t)\right\|_{L^2}^2\,,
\end{align*}
and we denote by $E_0(0)$ and $E_2(0)$ the same quantities, when computed on the initial data.
Therefore, in view of what we have said at the beginning of Subsection \ref{ss:higher}, we have
\begin{equation} \label{eq:en_equiv}
\forall\,t\geq0\,,\qquad\qquad E(t)\,\sim\,E_0(t)\,+\,E_2(t)\,.
\end{equation}

First of all, we observe that, from \eqref{est:u}, \eqref{est:omega} and \eqref{est:beta_L^2}, we deduce that
\begin{equation} \label{est:E_0}
\forall\,t\,\geq\,0\,,\qquad\qquad E_0(t)\,\leq\,C_1\,E_0(0)\;\exp\left(C_1\int^t_0\left\|\d_xu(\t)\right\|_{L^\infty}\,d\t\right)\,,
\end{equation}
for a ``universal'' numerical constant $C_1>0$ independent of any parameters and data of the problem.

Next, we put estimates \eqref{est:dd-u}, \eqref{est:dd-o} and \eqref{est:dd-k} together: in this way, after setting
\begin{equation} \label{def:A}
A(t)\,:=\,(1+t)^3\left(1+\|\bt\|^2_{L^\infty}\right)\,\left(1\,+\,\left\|\left(\d_xu,\d_x\o,\d_x\bt\right)\right\|^2_{L^\infty}\right)\,,
\end{equation}
we get, for all $t>0$, the inequality
\begin{align*}
&\frac{d}{dt}E_2(t)\,+\int_\Omega\frac{\bt^2}{\o}\left(\left|\d_x^3u\right|^2+\left|\d_x^3\o\right|^2+\left|\d_x^3\bt\right|^2\right)dx\,+\int_\Omega\o\left(\left|\dd\o\right|^2+\left|\dd\bt\right|^2\right)dx\,\leq\,C_2\,A(t)\,E_2(t)\,,
\end{align*}
for a new constant $C_2\,=\,C_2(\nu,\al_1\ldots\al_4)\,>\,0$ only depending on the quantity in the
brackets.
An application of the Gr\"onwall lemma thus yields 
\begin{equation} \label{est:E_2}
\forall\,t\geq0\,,\qquad\qquad E_2(t)\,\leq\,E_2(0)\,\exp\left(C_2\int^t_0A(\t)\,d\t\right)\,.
\end{equation}

In view of the equivalence stated in \eqref{eq:en_equiv}, summing up inequalities \eqref{est:E_0} and \eqref{est:E_2}  thus yields
\begin{equation} \label{est:E_int}
\forall\,t\geq0\,,\qquad\qquad E(t)\,\leq\,E(0)\,\exp\left(\left(C_1+C_2\right)\int^t_0A(\t)\,d\t\right)\,.
\end{equation}

At this point, if we define the time $T>0$ such that
\[
 T\,:=\,\sup\left\{t>0\;\bigg|\; \int^t_0A(\t)\,d\t\,\leq\,2\,\log2\right\}\,,
\]
we deduce from \eqref{est:E_int} that, for a new suitable constant $C_3>0$, only depending on the parameters $(\nu,\al_1\ldots\al_4)$, we have
\begin{equation} \label{est:E}
 \sup_{t\in[0,T]}E(t)\,\leq\,C_3\,E(0)\,.
\end{equation}

As a direct consequence of the previous computations, we also get the blow-up criterion. Indeed, from \eqref{est:E_int} we see that, if $T^*<+\infty$ and if we have
\[
\int^{T^*}_0A(\t)\,d\t\,<\,+\infty\,,
\]
then $E(T^*)<+\infty$. 
Thus the solution cannot blow up at time $T^*>0$, and hence it can be continued, by a standard argument, into a solution of system \eqref{eq:kolm} with the same regularity
(notice that, for this step, we also need the uniqueness of the solutions in the considered functional framework). At this point, we can remove the factor
$(1+t)^3$ in the continuation criterion, by an easy argument. In addition, we observe that, thanks to \eqref{est:interp_2}, where we take $p=2$, we can control
\[
\int^{T^*}_0\left\|\bt\right\|^2_{L^\infty}\,dt\,\lesssim\,\int^{T^*}_0\left(\oline\bt^2\,+\,\left\|\bt\right\|_{L^2}\,\left\|\d_x\bt\right\|_{L^2}\right)\,dt
\,\lesssim\,\left\|\bt_0\right\|_{L^2}^2\,T^*\,+\,\left\|\bt_0\right\|_{L^2}\int^{T^*}_0\left\|\d_x\bt\right\|_{L^\infty}\,dt\,,
\]
where we have used also \eqref{est:beta_L^2}. Therefore, we see that the previous integral is finite, if the integral appearing in \eqref{cont-cont}
is finite.
The continuation criterion is then proved.

Finally, let us derive the claimed lower bound on the lifespan of the solutions. For this, we recall the definition of the function $A(t)$, given in \eqref{def:A}.
Arguing as before and making use of inequality \eqref{est:interp_2} with $p=2$ for $\d_xu$, $\d_x\o$ and $\d_x\bt$, we find that
\begin{align*}
\left\|\bt\right\|^2_{L^\infty}\,&\lesssim\,\oline\bt^2\,+\,\left\|\bt\right\|_{L^2}\,\left\|\d_x\bt\right\|_{L^2}\,\lesssim\,E(t) \\
\left\|\d_xu\right\|^2_{L^\infty}\,&\lesssim\,\left\|\d_xu\right\|_{L^2}\,\left\|\dd u\right\|_{L^2}\,\lesssim\,E(t)\,,
\end{align*}
and analogous inequalities for $\d_x\o$ and $\d_x\bt$.
From those estimates, we duduce that, for any $t\in[0,T]$, one has
\begin{align*}
\int^t_0A(\t)\,d\t\,&\leq\,(1+t)^3\,\int^t_0\left(1+\|\bt\|^2_{L^\infty}\right)\,
\left(1\,+\,\left\|\left(\d_xu,\d_x\o,\d_xk\right)\right\|^2_{L^\infty}\right)\,d\t \\
&\lesssim\,(1+t)^3\,\int^t_0\big(1\,+\,E(\t)\big)^2\,d\t\,. 
\end{align*}
Owing to \eqref{est:E}, in the end we get the following bound:
\[
\forall\,t\in[0,T]\,,\qquad\qquad\int^t_0A(\t)\,d\t\,\lesssim\,(1+t)^3\,t\,\Big(\max\big\{1,E(0)\big\}\Big)^2\,.
\]
Therefore, using the definition of $T$, by continuity we discover that
\[
C\,(1+T)^3\,\Big(\max\big\{1,E(0)\big\}\Big)^2\,T\,\geq\,\int^T_0A(\t)\,d\t\,=\,2\,\log2\,,
\]
for a suitable positive constant $C$. At this point, if $T\geq1$, we are done. Assume, instead, that $T<1$: then, the previous inequality
implies that
\[
\wtilde C\,\Big(\max\big\{1,E(0)\big\}\Big)^2\,T\,\geq\,\int^T_0A(\t)\,d\t\,=\,2\,\log2\,,
\]
for a new constant $\wtilde C\,=\,8C$, which implies the claimed lower bound on the time $T$.


\section{Proof of the main results} \label{s:proof}

In this section, we prove our main results, namely Theorems \ref{t:local} and \ref{t:blow-up}.
First we deal with the local in time well-posedness issue for system \eqref{eq:kolm}. Then, we show that, under suitable symmetry assumptions, the solutions we have previously constructed in fact blow up in finite time.

\subsection{Proof of the local existence} \label{ss:local}

Here we prove the local in time well-posedness of system \eqref{eq:kolm}, contained in Theorem \ref{t:local}. Once again, we recall that we only show existence and uniqueness of solutions at $H^2$ level of regularity; the case of higher $H^m$ regularity follows by similar arguments.

For notational convenience, throughout all this subsection we adopt the following convention:
if $\big(f_\veps\big)_{\varepsilon > 0}$ is a sequence of functions in a normed space $X$, such that $\big(f_\veps\big)_{\veps>0}$ is \emph{bounded} in $X$,  we simply write $\big(f_\veps\big)_{\veps > 0} \subset X$.


\paragraph{Construction of approximated solutions.} To construct solutions to system \eqref{eq:kolm} under the assumptions of Theorem \ref{t:local}, the first step is to remove the singularity created by the possible vanishing of the mean turbulent kinetic energy $k$. At the same
time, we will also regularise the initial datum, in order to get smooth approximate solutions, for which the computations performed
in Section \ref{s:a-priori} are fully justified.

For all $t>0$, let $\mathcal{H}_t$ be the periodic heat kernel at time $t$.
Then, for any $\veps\in\,]0,1]$, we define
\[
k_{0,\veps}\,:=\,\mathcal{H}_\veps *\left(\sqrt{k_0}\,+\,\veps\right)^2\,=\,\mc H_\veps *k_0\,+\,2\,\veps\,\mc H_\veps *\sqrt{k_0}\,+\,\veps^2\,,
\]
where the convolution is taken only with respect to the space variables.
Observe that, since $\sqrt{k_0}\in H^2(\Omega)$ and $H^2(\Omega)$ is an algebra, also $k_0\in H^2(\Omega)$.
Therefore, we have 
\begin{equation} \label{u-est:k_0}
\big(k_{0,\veps}\big)_\veps\,\subset\,H^2(\Omega)\,,\qquad\qquad\mbox{ with }\qquad k_{0,\veps}\,\geq\,\veps^2\,>\,0\,.
\end{equation}

Next, for any fixed $\veps\in\,]0,1]$, we solve system \eqref{eq:kolm} with respect to the smooth initial datum
$\big(\mathcal{H}_\veps *u_{0}\,,\,\mathcal{H}_\veps *\o_{0}\,,\,k_{0,\veps}\big)$. Notice that
\[
\big(\mathcal{H}_\veps *u_{0}\,,\,\mathcal{H}_\veps *\o_{0}\,,\,k_{0,\veps}\big)_{\veps\in\,]0,1]}\,\subset\,
 H^2(\Omega)\,\times\,H^2(\Omega)\,\times\,H^2(\Omega)\,.
\]
By employing, for instance, the method of \cite{Kos-Kub} to the one-dimensional setting, for any $\veps>0$ we construct a
unique smooth local in time solutions $\big(u_{\veps},\o_{\veps},k_{\veps}\big)$, defined in some time interval
$[0,T_{\veps}]$. We refer to Theorem 1 in \cite{Kos-Kub} for a precise statement. Notice that the arguments there, based on Galerkin approximation,
are easily adaptable to prove propagation of higher order regularities.


Thus, by repeating the computations of Section \ref{s:a-priori},
we can establish some \emph{uniform} properties for the family of smooth solutions $\big(u_{\veps},\o_{\veps},k_{\veps}\big)_{\veps\in\,]0,1]}$.

First of all, owing to the lower bound \eqref{est:lower-T}, we observe that
all the solutions are defined in a common time interval $[0,T]$, where
\[
T\,:=\,\inf_{\veps\in\,]0,1]}T_\veps\,>\,0\,.
\]

Next, from \eqref{est:omega_L^inf}, we gather that
\begin{equation} \label{est:o_e-inf}
\big(\o_{\veps}\big)_\veps\,\subset\,L^\infty_T(L^\infty)\,,\qquad\qquad\mbox{ with }\qquad
0\,<\,\frac{\o_*}{\o_*\,\alpha_2\,t\,+\,1}\,\leq\,\omega_{\veps}(t,x)\,\leq\,\frac{\o^*}{\o^*\,\alpha_2\,t\,+\,1}\,,
\end{equation}
whereas \eqref{est:k-inf} implies that $k_{\veps}(t,x)\,\geq\,0$ for all $(t,x)\in[0,T]\times\Omega$.

Moreover, from \eqref{est:E}, we infer that
\begin{equation} \label{est:sol_e}
\left(u_{\veps},\o_{\veps},\sqrt{k_{\veps}}\right)_\veps\,\subset\,L^\infty_T(H^{2})\,\times\,L^\infty_T(H^{2})\,\times\,L^\infty_T(H^2)
\end{equation}
and also that
\begin{equation} \label{est:d_x^3_e}
\left(\sqrt{\frac{k_{\veps}}{\o_{\veps}}}\,\d_x^{3}u_{\veps}\right)_\veps,\quad
\left(\sqrt{\frac{k_{\veps}}{\o_{\veps}}}\,\d_x^{3}\o_{\veps}\right)_\veps,\quad
\left(\sqrt{\frac{k_{\veps}}{\o_{\veps}}}\,\d_x^{3}\sqrt{k_{\veps}}\right)_\veps\quad\subset\quad L^2_T(L^2)\,.
\end{equation}
As a consequence of \eqref{est:sol_e}, we also get that
\[
\left(k_{\veps}\right)_{\veps}\,\subset\,L^\infty_T(H^{2})\,.
\]


\paragraph{Stability analysis and convergence.}
Thanks to the previous uniform embedding properties, we can extract a weakly converging subsequence. More precisely, owing to \eqref{est:sol_e},
we deduce the existence of functions $u$, $\o$ and $k$, all belonging to the space $L^\infty_T(H^2)$, such that,
up to the extraction of a subsequence (not relabelled here), one has the convergences
\[
u_\veps\,\stackrel{*}{\rightharpoonup}\,u\,,\quad \o_\veps\,\stackrel{*}{\rightharpoonup}\,\o\quad \mbox{ and }\quad
k_\veps\,\stackrel{*}{\rightharpoonup}\,k\qquad\qquad \mbox{ in }\qquad L^\infty_T(H^2)\,.
\]
The basic idea is that the triplet $\big(u,\o,k\big)$ identified above is the sought solution to the Kolmogorov system \eqref{eq:kolm}. Nonetheless, of course the previous weak convergence properties are not enough to assert this, owing to the presence of non-linear terms in the equations.
Therefore, we need some compactness properties for the sequence of solutions $\big(u_\veps,\o_\veps,k_\veps\big)_\veps$: this is the goal of the present paragraph.

Before going on, let us immediately clarify that it is enough to prove that the triplet $\big(u,\o,k\big)$ solves \eqref{eq:kolm} in the weak sense.
Similarly, in order to verify this, it is enough to pass to the limit in the weak formulation of the equations satisfied by $\big(u_\veps,\o_\veps,k_\veps\big)$.
Here, the weak formulation we refer to is the classical one: for instance, concerning the approximate velocity fields $u_\veps$,
we have
\begin{align*}
\iint_{\R_+\times\Omega}\left(-\,u_\veps\,\d_t\phi\,-\,\frac{1}{2}\,u_\veps^2\,\d_x\phi\,+\,\nu\,\frac{k_\veps}{\o_\veps}\,\d_xu_\veps\,\d_x\phi\right)\,dx\,dt\,=\,
\int_\Omega u_{0,\veps}\,\phi(0,\cdot)\,dx
\end{align*}
for any test-function $\phi\in C^\infty_c(\R_+\times\Omega)$, and analogous relations holds for $\big(\o_\veps\big)_\veps$ and $\big(k_\veps\big)_\veps$.

We start by analysing the equation for $u_\veps$, which reads
\[
 \d_tu_\veps\,=\,-\,u_\veps\,\d_xu_\veps\,+\,\nu\,\d_x\left(\frac{k_\veps}{\o_\veps}\,\d_xu_\veps\right)\,.
\]
From the previous uniform bounds \eqref{est:o_e-inf} and \eqref{est:sol_e}, it is easy to deduce that $\big(\d_tu_\veps\big)_\veps$
is uniformly bounded in the space $L^\infty_T(H^{-1})$, hence
\begin{equation} \label{ub:u_e}
\big(u_\veps\big)_\veps\,\subset\,L^\infty_T(H^2)\,\cap\,W^{1,\infty}_T(H^{-1})\,.
\end{equation}
An application of the Ascoli-Arzel\`a theorem (see \tsl{e.g.} Lemma 3.6 in \cite{Tsai}) thus implies that
$\big(u_\veps\big)_\veps$ is compact in the space $C_T(H^{-1})$. Then, up to the extraction of a further subsequence, one gathers the strong convergence
\begin{equation} \label{conv:strong-u}
u_\veps\,\longrightarrow\,u\qquad\qquad\mbox{ in }\qquad C_T(H^s)\qquad \forall\;0\,\leq\,s\,<\,2\,,
\end{equation}
where we have also used an interpolation argument between the strong convergence in $C_T(H^{-1})$ and the uniform bounds in $L^\infty_T(H^2)$.

It goes without saying that a similar argument applies also to the equations satisfied for $\o_\veps$ and $k_\veps$, yielding strong convergence
of $\big(\o_\veps\big)_\veps$ and $\big(k_\veps\big)_\veps$ to, respectively, $\o$ and $k$ in $C_T(H^s)$, for any $s<2$.

In particular, from \eqref{conv:strong-u} and Sobolev embeddings, we deduce the pointwise convergences
\begin{equation} \label{conv:point-u}
u_\veps\,\longrightarrow\,u\qquad \mbox{ and }\qquad \d_xu_\veps\,\longrightarrow\,\d_xu\qquad\qquad \mbox{ almost everywhere in }\; [0,T]\times\Omega\,,
\end{equation}
and analogous pointwise convergence properties also for the sequences $\big(\o_\veps\big)_\veps$, $\big(k_\veps\big)_\veps$ and
their first order space derivatives.

At this point, we make use of \eqref{conv:point-u} for $u$, $\o$ and $k$, and of the strong convergence of the initial datum
$k_{0,\veps}\,\longrightarrow\,k_0$. Then,
it is easy to pass to the limit in the weak formulation of the equations for
$\big(u_\veps,\o_\veps,k_\veps\big)_\veps$.
We thus discover that the triplet $(u,\o,k)$ indeed solves system \eqref{eq:kolm} in the weak sense,
namely
\begin{align*}
&\iint_{\R_+\times\Omega}\left(-\,u\,\d_t\phi_1\,-\,\frac{1}{2}\,u^2\,\d_x\phi_1\,+\,\nu\,\frac{k}{\o}\,\d_xu\,\d_x\phi_1\right)\,dx\,dt\,=\,
\int_\Omega u_{0}\,\phi_1(0,\cdot)\,dx\,, \\
&\iint_{\R_+\times\Omega}\left(-\,\o\,\d_t\phi_2\,-\,\o\,u\,\d_x\phi_2\,-\,\o\,\d_xu\,\phi_2\,+\,
\alpha_1\,\frac{k}{\o}\,\d_x\o\,\d_x\phi_2\right)\,dx\,dt \\
&\qquad\qquad =\,
\int_\Omega \o_{0}\,\phi_2(0,\cdot)\,dx\,-\,\iint_{\R_+\times\Omega}\o^2\,\phi_2\,dx\,dt\,, \\
&\iint_{\R_+\times\Omega}\left(-\,k\,\d_t\phi_3\,-\,k\,u\,\d_x\phi_3\,-\,k\,\d_xu\,\phi_3\,+\,
\alpha_3\,\frac{k}{\o}\,\d_xk\,\d_x\phi_3\right)\,dx\,dt \\
&\qquad\qquad =\,
\int_\Omega k_{0}\,\phi_3(0,\cdot)\,dx\,-\,\iint_{\R_+\times\Omega}k\,\o\,\phi_3\,dx\,dt\,+\,
\alpha_4\iint_{\R_+\times\Omega}\frac{k}{\o}\,\left|\d_xu\right|^2\,\phi_3\,dx\,dt\,,
\end{align*}
for all smooth functions $\phi_{1,2,3}$ belonging to $C^\infty_c(\R_+\times\Omega)$.
In addition, using the regularity properties stated above, see for instance \eqref{ub:u_e}, it is possible to check
that $\big(u,\o,k\big)$ is in fact a strong solution\footnote{As a matter of fact,
by a careful analysis of the viscosity and diffusion terms and a use of the well-known tame estimates, it is possible to
strengthen the condition $\big(u_\veps\big)_\veps\subset W^{1,\infty}_T(H^{-1})$ to the condition
$\big(u_\veps\big)_\veps\subset W^{1,\infty}_T(L^2)$.} to the Kolmogorov system \eqref{eq:kolm}.

To conclude, we remark that, owing to \eqref{est:sol_e} again, there exists a function $\bt\in L^\infty_T(H^2)$ such that
$\sqrt{k_\veps}\,\stackrel{*}{\rightharpoonup}\,\bt$ in that space. However, owing to the pointwise convergence of the $k_\veps$'s, we deduce that
$\bt\,=\,\sqrt{k}$. Studying the equation for the quantities $\sqrt{k_\veps}$ and using an argument similar to the one employed above, we can
get that $\sqrt{k}$ belongs to $C_T(H^{-1})$, hence by interpolation it belongs to $C_T(H^s)$ for any $s<2$. Thanks to this property, \eqref{est:sol_e}
and \eqref{est:d_x^3_e}, we obtain also that the triplet $\big(u,\o,\sqrt{k}\big)$ solves system \eqref{eq:kolm_2}.

\subsection{Uniqueness of solutions} \label{ss:unique}
In order to complete the proof of Theorem \ref{t:local}, it remains us to prove uniqueness of solutions at the previous level of regularity.
First of all, we need the following simple lemma, stating the equivalence of the equations for $k$ and $\sqrt{k}$ in the considered functional framework.
We recall that, for $T>0$, we have defined the functional space
\begin{align*}
\mbb X_T(\Omega)\,&:=\,\Big\{\big(u,\o,k\big)\;\Big|\quad \o,\o^{-1},\,{k}\;\in\,L^\infty\big([0,T]\times\Omega\big)\,,\quad \o>0\,,\quad k\geq0 \\
&\qquad\qquad\qquad\quad
u,\,\o,\,\sqrt{k}\;\in\,C\big([0,T];L^2(\Omega)\big)\,,\quad \d_xu,\,\d_x\o,\,\d_x\sqrt{k}\;\in\,L^\infty\big([0,T]\times\Omega\big) \Big\}\,.
\end{align*}

\begin{lemma} \label{l:k-b}
Let $T>0$ be a given time.
\begin{enumerate}[(a)]
 \item Let $\big(u,\o,k\big)$ be a triplet in $\mbb X_T(\Omega)$ and assume that $k$ solves the third equation in \eqref{eq:kolm}
in the weak sense in $[0,T]\times\Omega$. Then $\bt\,:=\,\sqrt{k}$ is a weak solution to equation \eqref{eq:beta}, related to the initial datum $\bt_0\,:=\,\sqrt{k(0)}$.

\item Let the triplet of functions $\big(u,\o,\bt\big)$ be such that $\big(u,\o,\bt^2\big)\in\mbb X_T(\Omega)$ and 
$\bt$ is a weak solution to \eqref{eq:beta} in $[0,T]\times\Omega$, related to the initial datum $\bt_0$. Set $k\,:=\,\bt^2$. Then $k$ is a weak solution to the
third equation in \eqref{eq:kolm} on $[0,T]\times\Omega$, related to the initial datum $k_0\,:=\,\bt_0^2$.
\end{enumerate}
\end{lemma}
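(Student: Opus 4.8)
The plan is to exploit that, \emph{formally}, dividing the third equation of \eqref{eq:kolm} by $2\sqrt k$ yields \eqref{eq:beta}, while multiplying \eqref{eq:beta} by $2\bt=2\sqrt k$ (and using $k=\bt^2$) recovers the equation for $k$; the whole issue is to legitimise these operations in the weak setting, where $1/\sqrt k$ is singular on the possibly nontrivial set $\{k=0\}$. First I would record the regularity that membership in $\mbb X_T(\Omega)$ supplies. Since $\d_x\sqrt k\in L^\infty([0,T]\times\Omega)$ and $\sqrt k\in C_T(L^2)$, on the torus one gets $\bt:=\sqrt k\in L^\infty_T(H^1)\hookrightarrow L^\infty([0,T]\times\Omega)$, hence $k=\bt^2\in L^\infty_T(W^{1,\infty})$ with the pointwise chain rule $\d_x k=2\bt\,\d_x\bt$ valid a.e.; in particular $\d_x k=0$ a.e. on $\{k=0\}$. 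From the equations themselves one checks $k,\bt\in L^2_T(H^1)$ with $\d_t k,\d_t\bt\in L^2_T(H^{-1})$, because the fluxes $\tfrac{k}{\o}\d_x k=\tfrac{2\bt^3}{\o}\d_x\bt$ and $\tfrac{\bt^2}{\o}\d_x\bt$ lie in $L^\infty\subset L^2$ and the remaining terms are bounded. This places both equations in the standard parabolic duality framework and guarantees that every product in the two weak formulations is at least $L^1_{t,x}$.

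For part (b) I would test \eqref{eq:beta}, written in duality form against elements of $L^2_T(H^1)$, with $\phi=2\bt\psi$ for $\psi\in C^1_c([0,T)\times\Omega)$; this is admissible since $2\bt\psi\in L^2_T(H^1)$. The classical chain-rule (renormalisation) lemma for a $C^1$ function of $\bt$ gives $\int_0^T\langle\d_t\bt,2\bt\psi\rangle\,dt=\int_0^T\langle\d_t(\bt^2),\psi\rangle\,dt$, whose integration by parts in time produces exactly the time term and the initial datum $k_0=\bt_0^2$ of the $k$-equation. In the spatial terms I would substitute $k=\bt^2$, $\d_x k=2\bt\,\d_x\bt$ and integrate the diffusion by parts; the one point worth noting is that the renormalisation correction created thereby, $2\al_3\int\tfrac{\bt^2}{\o}(\d_x\bt)^2\psi$, is precisely cancelled by the source term $\al_3\tfrac{\bt}{\o}|\d_x\bt|^2$ tested against $2\bt\psi$, leaving $2\al_3\int\tfrac{\bt^3}{\o}\d_x\bt\,\d_x\psi=\al_3\int\tfrac{k}{\o}\d_x k\,\d_x\psi$. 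The transport, damping and production terms then match term by term, so $k=\bt^2$ solves the third equation of \eqref{eq:kolm} weakly.

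For part (a) the same idea is run in reverse, but now the singular factor must be regularised. Testing the duality form of the $k$-equation with $\phi_\delta=\tfrac{\psi}{2\sqrt{k+\delta}}$, $\delta>0$ — an admissible element of $L^2_T(H^1)$ for each fixed $\delta$ — and applying the chain-rule lemma with $\Phi(s)=\sqrt{s+\delta}$ turns the time term into $\int_0^T\langle\d_t\sqrt{k+\delta},\psi\rangle\,dt$, whose time-integration by parts yields the correct initial datum $\sqrt{k_0+\delta}\to\bt_0$. Using $\d_x\phi_\delta=\tfrac{\d_x\psi}{2\sqrt{k+\delta}}-\tfrac{\psi\,\d_x k}{4(k+\delta)^{3/2}}$, the spatial terms become a transport term $\int u\,\d_x\sqrt{k+\delta}\,\psi$, a ``good'' diffusion term $\al_3\int\tfrac{k}{\o}\tfrac{\d_x k}{2\sqrt{k+\delta}}\,\d_x\psi$, and a correction $-\al_3\int\tfrac{k}{\o}\tfrac{(\d_x k)^2}{4(k+\delta)^{3/2}}\psi$, while the right-hand side is $\int S\,\phi_\delta$ with $S:=-k\o+\al_4\tfrac{k}{\o}|\d_x u|^2$. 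I would then let $\delta\to0^+$ by dominated convergence: the integrands are bounded uniformly in $\delta$ by fixed $L^\infty$ functions and converge a.e. (on $\{k>0\}$ pointwise, on $\{k=0\}$ because there $\d_x k=0$). In the limit the good term gives $\al_3\int\tfrac{\bt^2}{\o}\d_x\bt\,\d_x\psi$, the correction gives $-\al_3\int\tfrac{\bt}{\o}|\d_x\bt|^2\psi$, and $\int S\,\phi_\delta\to\int\!\big(-\tfrac12\bt\o+\tfrac{\al_4}{2}\tfrac{\bt}{\o}|\d_x u|^2\big)\psi$; rearranging reproduces exactly the weak formulation of \eqref{eq:beta}, the correction supplying its production term $\al_3\tfrac{\bt}{\o}|\d_x\bt|^2$.

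The main obstacle is precisely this passage through the zero set of $k$ in part (a). The source contribution $S/\sqrt{k+\delta}$ is benign, since $S$ carries a factor $k$ and thus vanishes like $k$ where $k$ does; the truly delicate term is the diffusion correction, where I would invoke the hypothesis $\d_x\sqrt k\in L^\infty$ decisively through $\d_x k=2\sqrt k\,\d_x\sqrt k$, i.e. $(\d_x k)^2=4k(\d_x\sqrt k)^2$, which bounds $\tfrac{k}{\o}\tfrac{(\d_x k)^2}{(k+\delta)^{3/2}}$ by $4\big\|\tfrac{\bt}{\o}(\d_x\bt)^2\big\|_{L^\infty}$ uniformly in $\delta$; without this control the correction would be genuinely out of reach. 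Direction (b), by contrast, is routine, the map $\bt\mapsto\bt^2$ being smooth. A last minor point to settle in (b) is the sign of $\bt$: as $\mbb X_T$ sees $\bt$ only through $\bt^2$ and $\sqrt{\bt^2}=|\bt|$, I would either assume $\bt\ge0$ (the only relevant case, since $\bt=\sqrt k$) or replace $\bt$ by $|\bt|$, which solves the same equation.
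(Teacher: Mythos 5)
Your proposal is correct and follows essentially the same strategy as the paper's own proof: part (b) by multiplying/testing equation \eqref{eq:beta} with $2\bt$ and justifying the chain rule $2\,\bt\,\d_t\bt\,=\,\d_t\bt^2$ in the weak-duality setting, and part (a) by regularising the square root through the shift $k+\delta$ and passing to the limit $\delta\to0^+$. The only difference is one of bookkeeping: the paper writes the exact PDE satisfied by $k_\veps:=k+\veps$ (with explicit $O(\veps)$ error terms) and then divides by $2\sqrt{k_\veps}$, whereas you keep the original equation for $k$ and put the regularisation into the test function $\psi/\big(2\sqrt{k+\delta}\big)$, concluding by dominated convergence --- both arguments hinge on the same key control $(\d_xk)^2\,=\,4\,k\,\big(\d_x\sqrt{k}\big)^2$ furnished by the assumption $\d_x\sqrt{k}\in L^\infty$.
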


\begin{proof}
The proof is based on simple verifications. To see that statement (b) holds true, it is enough to multiply equation \eqref{eq:beta} by the quantity $2\bt$.
Next, we check that all the integrations by parts are justified in the sense of $\mc D'\big(\,]0,T[\,\times\Omega\big)$, owing to the regularity of the different
quantities. For instance, from equation \eqref{eq:beta}, one gets $\d_t\bt\in L^2_T(H^{-1})$, thus the duality pairing
$$
\langle \d_t\bt\,,\,2\bt\vphi\rangle_{L^2_T(H^{-1})\times L^2_T(H^{1})}
$$
for $\vphi\in\mc D\big(\,]0,T[\,\times\Omega\big)$, is well-justified; from this computation, one obtains that
\begin{equation} \label{eq:distrib}
2\,\bt\,\d_t\bt\,=\,\d_t\bt^2\qquad\qquad \mbox{ in the sense of }\mc D'\big(\,]0,T[\,\times\Omega\big)\,. 
\end{equation}
Notice that the previous equality makes sense, as (owing to the assumption $\big(u,\o,\bt^2\big)\in\mbb X_T(\Omega)$) one has $\sqrt{k}=\bt\in C_T(L^2)$.
In order to see \eqref{eq:distrib}, let us take some $\vphi\in\mc D\big(\,]0,T[\,\times\Omega\big)$, as above. First of all, by a density argument we check that the equality
\[
\bt\,\d_t\vphi\,=\,\d_t\big(\bt\,\vphi\big)\,-\,\vphi\,\d_t\bt
\]
holds true in $L^2_T(H^{-1})$. Next, using that identity, we can compute
\begin{align*}
\lan \d_t\big(\bt^2\big),\vphi\ran_{\mc D'\times\mc D}\,&=\,-\,\lan\bt^2,\d_t\vphi\ran_{\mc D'\times\mc D}\,
=\,-\,\lan\bt,\bt\,\d_t\vphi\ran_{L^2_T(H^1)\times L^2_T(H^{-1})} \\
&=\,-\,\lan\bt,\d_t\big(\bt\,\vphi\big)\ran_{L^2_T(H^1)\times L^2_T(H^{-1})}\,+\,\lan\bt,\vphi\,\d_t\bt\ran_{L^2_T(H^1)\times L^2_T(H^{-1})}\,.
\end{align*}
But $\bt\in W^{1,2}_T(H^{-1})$ and $\bt\,\vphi\in L^2_T(H^1)$, thus a duality argument (proceed \tsl{e.g.} by approximating $\beta$ by smooth functions again) shows that
\[
 -\,\lan\bt,\d_t\big(\bt\,\vphi\big)\ran_{L^2_T(H^1)\times L^2_T(H^{-1})}\,=\,\lan\d_t\bt,\bt\,\vphi\ran_{L^2_T(H^{-1})\times L^2_T(H^{1})}\,,
\]
which in the end proves \eqref{eq:distrib}, as claimed.
One can argue similarly for the terms presenting the space derivatives (using, for instance, an approximation argument as the one used in Proposition 9.4 of \cite{Brezis}).
Finally, for the initial datum, we make use of the fact that $\bt\in C_T(L^2)$ to infer that $\bt^2=k\in C_T(L^1)$.
In the end, statement (b) of the lemma is proved.

Now, let us prove assertion (a). First of all, for $\veps>0$, we define the functions $k_\veps\,:=\,k+\veps$. Then $k_\veps\geq\veps$ in $[0,T]\times\Omega$
and $\big(k_\veps\big)_\veps\subset C_T(L^1)\cap L^\infty_T(H^1)$. Moreover $k_\veps\,\longrightarrow\,k$ for $\veps\ra0^+$, uniformly on $[0,T]\times\Omega$.
Notice also that $k_\veps$ verifies the equation
\[
\d_tk_\veps\,+\,u\,\d_xk_\veps\,-\,\al_3\,\d_x\left(\frac{k_\veps}{\o}\,\d_xk_\veps\right)\,+\,k_\veps\,\o\,=\,\al_4\,\frac{k_\veps}{\o}\,|\d_xu|^2\,-\,
\veps\,\al_3\,\d_x\left(\frac{1}{\o}\,\d_xk_\veps\right)\,-\,\al_4\,\frac{\veps}{\o}\,|\d_xu|^2\,.
\]
Owing to the strictly positive lower bound on $k_\veps$, we can multiply the previous equation by $k_\veps^{-1/2}/2$, thus finding 
(by the same manipulations as in the previous step) an equation for $\sqrt{k_\veps}$. At this point, passing to the limit for $\veps\ra0^+$
is easy, thanks to the convergence properties of $\big(k_\veps\big)_\veps$. This finally implies that $\beta\,=\,\sqrt{k}$ solves \eqref{eq:beta} in the weak sense.
The claimed equality for the initial datum is also satisfied, owing to the time continuity of $\sqrt{k}$ with values in $L^2$.

The lemma is now completely proven.
\end{proof}

With this lemma at hand, we can prove a $L^2$ stability estimate for strong solutions of system \eqref{eq:kolm_2} possessing vanishing mean turbulent kinetic energy.
This is provided by the next statement. The uniqueness of solutions claimed in Theorem \ref{t:local} will be a direct consequence of it.

\begin{thm} \label{th:stab}
Let the triplets $\big(u_1,\o_1,k_1\big)$ and $\big(u_2,\o_2,k_2\big)$ be two solutions to the Kolmogorov two-equation model \eqref{eq:kolm}.
Assume that, for some time $T>0$, the following hypotheses are satisfied for any $j\in\{1,2\}$:
\begin{enumerate}[(i)]
 \item 
 one has $\o_j>0$ and $k_j\geq0$ on $[0,T]\times\Omega$;
 \item 
 the quantities $\o_j,$ $\o_j^{-1}$ and $\sqrt{k_j}$ belong to $L^\infty\big([0,T];L^\infty(\Omega)\big)$;
 \item 
 the functions $\d_xu_j$, $\d_x\o_j$ and $\d_x\sqrt{k_j}$ all belong to $L^\infty\big([0,T];L^\infty(\Omega)\big)$.
\end{enumerate}
After defining the quantities
$$
U\,:=\,u_1-u_2\,,\qquad \Sigma\,:=\,\omega_1-\omega_2\qquad\mbox{ and }\qquad B\,:=\,\sqrt{k_1}-\sqrt{k_2}\,,
$$
assume also that $U$, $\Sigma$ and $B$ all belong to $C\big([0,T];L^2(\Omega)\big)$. 

Then, there exist a positive constant $C\,=\, C(\nu,\al_1\ldots\al_4)$ and a function $\Theta\in L^1\big([0,T]\big)$ such that, for all times $t\in[0,T]$,
one has
\[
\left\|U(t)\right\|_{L^2}^2+\left\|\Sigma(t)\right\|_{L^2}^2+\left\|B(t)\right\|_{L^2}^2\,\leq\,\Big(\left\|U(0)\right\|_{L^2}^2+\left\|\Sigma(0)\right\|_{L^2}^2+\left\|B(0)\right\|_{L^2}^2\Big)\,\exp\left(C\int^t_0\Theta(\t)\,d\t\right)\,.
\]
\end{thm}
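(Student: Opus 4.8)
The plan is to derive the equations solved by the differences $U$, $\Sigma$ and $B$, to run an $L^2$ energy estimate on each of them, to sum the three resulting inequalities, and to close by Grönwall's lemma. By Lemma \ref{l:k-b}, each triplet $\big(u_j,\o_j,\bt_j\big)$ with $\bt_j=\sqrt{k_j}$ solves system \eqref{eq:kolm_2}, so I would work with those equations. Writing $\g_j:=\bt_j^2/\o_j$ and telescoping the nonlinearities in the usual way (e.g. $u_1\d_xu_1-u_2\d_xu_2=u_1\d_xU+U\d_xu_2$, $\o_1^2-\o_2^2=(\o_1+\o_2)\Sigma$, and $\bt_1\o_1-\bt_2\o_2=\bt_1\Sigma+\o_2B$), all the genuinely lower-order contributions — transport, the damping terms and the reaction terms — are bounded directly by $\Theta(t)\big(\|U\|_{L^2}^2+\|\Sigma\|_{L^2}^2+\|B\|_{L^2}^2\big)$ using only the $L^\infty$ hypotheses (i)--(iii) and Cauchy--Schwarz. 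Here $\Theta$ collects the squared $L^\infty$ norms of $\d_xu_j,\d_x\o_j,\d_x\bt_j$ together with $\|\o_j\|_{L^\infty}$, $\|\o_j^{-1}\|_{L^\infty}$ and $\|\bt_j\|_{L^\infty}$; since by hypothesis all of these lie in $L^\infty([0,T])\subset L^1([0,T])$, the function $\Theta$ belongs to $L^1([0,T])$ as required.

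The heart of the matter is the difference of the diffusion terms $-\d_x(\g_1\d_x\phi_1-\g_2\d_x\phi_2)$, with $\phi\in\{u,\o,\bt\}$ and $\Phi:=\phi_1-\phi_2$. Testing against $\Phi$ and integrating by parts produces $\int(\g_1\d_x\phi_1-\g_2\d_x\phi_2)\,\d_x\Phi$, and here I would use the elementary algebraic identity
\[
(\g_1\d_x\phi_1-\g_2\d_x\phi_2)\,\d_x\Phi\,=\,\frac{\g_1+\g_2}{2}\,(\d_x\Phi)^2\,+\,\frac{\g_1-\g_2}{2}\,\big((\d_x\phi_1)^2-(\d_x\phi_2)^2\big)\,.
\]
The first summand is a nonnegative (though degenerate) diffusion term, kept on the left-hand side. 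The second is the dangerous one: writing $(\d_x\phi_1)^2-(\d_x\phi_2)^2=(\d_x\phi_1+\d_x\phi_2)\,\d_x\Phi$ and absorbing the top-order factor $\d_x\Phi$ into the good term by Young's inequality leaves the weight $\tfrac{(\g_1-\g_2)^2}{\g_1+\g_2}$. The crucial point is that this weight is harmless: since $\g_1-\g_2=\tfrac{(\bt_1+\bt_2)B}{\o_1}-\tfrac{\bt_2^2\Sigma}{\o_1\o_2}$ while $\g_1+\g_2\gtrsim\bt_1^2+\bt_2^2$ (using $\|\o_j\|_{L^\infty}<+\infty$), one gets the pointwise bound
\[
\frac{(\g_1-\g_2)^2}{\g_1+\g_2}\,\lesssim\,\frac{(\bt_1+\bt_2)^2}{\bt_1^2+\bt_2^2}\,B^2\,+\,\frac{\bt_2^4}{\bt_1^2+\bt_2^2}\,\Sigma^2\,\lesssim\,B^2\,+\,\|\bt_2\|_{L^\infty}^2\,\Sigma^2\,,
\]
which is quadratic in $(B,\Sigma)$ with bounded prefactors. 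Multiplying by $\|\d_x\phi_1+\d_x\phi_2\|_{L^\infty}^2$ and integrating yields exactly a term of the form $\Theta(t)\big(\|B\|_{L^2}^2+\|\Sigma\|_{L^2}^2\big)$, which is what the Grönwall argument requires.

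It then remains to treat, in the equation for $B$, the difference of the two production terms $\tfrac{\al_4}{2}\big(\tfrac{\bt_1}{\o_1}|\d_xu_1|^2-\tfrac{\bt_2}{\o_2}|\d_xu_2|^2\big)$ and $\al_3\big(\tfrac{\bt_1}{\o_1}|\d_x\bt_1|^2-\tfrac{\bt_2}{\o_2}|\d_x\bt_2|^2\big)$. Each splits into a part carrying the difference $\tfrac{\bt_1}{\o_1}-\tfrac{\bt_2}{\o_2}=\tfrac{B}{\o_1}-\tfrac{\bt_2\Sigma}{\o_1\o_2}$, which is directly $\lesssim|B|+|\Sigma|$ and harmless, and a part of the form $\tfrac{\bt_1}{\o_1}(\d_xu_1+\d_xu_2)\,\d_xU$ (respectively with $\bt$ in place of $u$) times $B$. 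This last piece again contains a top-order derivative $\d_xU$, but now the prefactor $\bt_1/\o_1$ is exactly what is needed: pairing $\d_xU$ with $\sqrt{\g_1}\,\d_xU=\tfrac{\bt_1}{\sqrt{\o_1}}\d_xU$ coming from the $U$-diffusion good term, the offending $\bt_1$ cancels, and Young leaves the controllable weight $\tfrac{(\d_xu_1+\d_xu_2)^2}{\o_1}\,B^2\lesssim\Theta(t)\,B^2$. This is precisely why I would sum the three energy estimates before absorbing: the $U$- and $B$-diffusion good terms must be simultaneously available.

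Summing and choosing the Young parameter small enough to absorb all top-order remainders into the good terms, I obtain
\[
\frac{d}{dt}\Big(\|U\|_{L^2}^2+\|\Sigma\|_{L^2}^2+\|B\|_{L^2}^2\Big)\,\leq\,C\,\Theta(t)\,\Big(\|U\|_{L^2}^2+\|\Sigma\|_{L^2}^2+\|B\|_{L^2}^2\Big)\,,
\]
and Grönwall's lemma gives the claim. The main obstacle is exactly the degeneracy of the diffusion: the naive attempt to absorb the diffusion-difference term into the single weight $\g_1$ produces the ratio $(\g_1-\g_2)^2/\g_1$, which blows up like $\bt_2^2/\bt_1^2$ at points where $k_1$ vanishes but $k_2$ does not, so uniqueness would fail with that crude estimate; replacing it by the symmetric weight $\g_1+\g_2$ through the identity above is what removes the singularity and restores the correct quadratic structure. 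A secondary, routine technical point is that, since the solutions only enjoy the regularity encoded in $\mathbb X_T(\Omega)$, the above energy manipulations must be justified by a regularisation argument, in the same spirit as in the proof of Lemma \ref{l:k-b}.
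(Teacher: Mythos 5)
Your proposal is correct, and its skeleton --- reduction to system \eqref{eq:kolm_2} via Lemma \ref{l:k-b}, $L^2$ energy estimates on $U$, $\Sigma$, $B$, summation of the three estimates before absorbing, and Gr\"onwall --- is the same as the paper's proof. The one place where you genuinely deviate is the implementation of the key symmetrization step for the degenerate diffusion. The paper exploits the fact that $U$ solves two equivalent equations (one with diffusion weight $k_1/\o_1$, one with weight $k_2/\o_2$), sums the two energy identities to produce the symmetric good term $\int_\Omega(\bt_1^2/\o_1+\bt_2^2/\o_2)\,|\d_xU|^2\,dx$, and then controls the remainder by decomposing $\Delta = k_1/\o_1-k_2/\o_2 = -\frac{k_1}{\o_1\o_2}\,\Sigma+\frac{1}{\o_2}\,B\,(\bt_1+\bt_2)$ and pairing each degenerate prefactor $\bt_j/\sqrt{\o_j}$ with $\d_xU$ via Cauchy--Schwarz. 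Your algebraic identity produces exactly the same symmetric weight $(\g_1+\g_2)/2$ directly at the level of the integrand (the two devices are equivalent: your identity is the paper's two-equation summation performed pointwise), but you then bound the remainder in one stroke by Young against the symmetric weight, which leaves the ratio $(\g_1-\g_2)^2/(\g_1+\g_2)$; your pointwise bound of this ratio by $B^2+\|\bt_2\|_{L^\infty}^2\,\Sigma^2$ is correct, and on the set where $\g_1+\g_2=0$ one has $\bt_1=\bt_2=0$, so the whole integrand vanishes and nothing is lost there. This packaging is arguably cleaner: it isolates in a single pointwise inequality the reason why the symmetric weight rescues uniqueness (which your final remark about the failure of the single-weight ratio $(\g_1-\g_2)^2/\g_1$ diagnoses correctly), whereas the paper distributes the same information over several term-by-term estimates producing the explicit functions $\Theta_{1,2,3}$. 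Your treatment of the production terms in the $B$ equation --- splitting off $\wtilde\Delta$ and absorbing $\frac{\bt_1}{\o_1}(\d_xu_1+\d_xu_2)\,\d_xU$ into the $U$-diffusion good term, which is precisely why the three estimates must be summed before fixing the Young parameter --- coincides with the paper's argument (the paper fixes $\eta=1/4$ there), and your closing caveat about justifying the manipulations by regularisation matches the paper's appeal to a regularisation-in-space process.
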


\begin{proof}
Before starting the computations, let us introduce some more notation and define
\begin{equation} \label{eq:K-B} 
K\,:=\,k_1-k_2\,=\,B\,\big(\bt_1+\bt_2\big)\,,
\end{equation} 
where, as in Section \ref{s:a-priori}, we have set $\bt_i\,:=\,\sqrt{k_i}$ for $i\in\{1,2\}$.
However, for lightening the notation, we will keep using the symbols $k_{1,2}$ and $K$ when convenient.

Thanks to Lemma \ref{l:k-b}, we know that, for any $j\in\{1,2\}$, the triplet $\big(u_j,\o_j,\bt_j\big)$ solves system \eqref{eq:kolm_2}. Thus,
we can perform the stability estimates on system \eqref{eq:kolm_2}. We point out that, in view of assumption (iii), the three functions 
$U$, $\Sigma$ and $B$ belong in particular to $L^2\big([0,T];H^1(\Omega)\big)$, with $\d_tU$, $\d_t\Sigma$ and $\d_tB$ in $L^2\big([0,T];H^{-1}(\Omega)\big)$. Then, our energy estimates below are rigorously justified, in light of the regularity assumptions formulated in the statement of the theorem: this is a consequence of Theorem 3 in Section 5.9 of \cite{Evans}, a standard regularisation-in-space process (which we omit here) and related commutator estimates.

\medbreak
To begin with, we see that $U$ solves the equation
\begin{align} \label{eq:U}
\d_tU\,+\,u_1\,\d_xU\,-\,\nu\,\d_x\left(\frac{k_1}{\o_1}\,\d_xU\right)\,=\,-\,U\,\d_xu_2\,+\,\nu\,\d_x\big(\Delta\,\d_xu_2\big)\,,
\end{align}
where we have defined
\begin{equation} \label{eq:D}
\Delta\,:=\,\dfrac{k_1}{\o_1}-\dfrac{k_2}{\o_2}\,=\,-\,\frac{k_1}{\o_1\,\o_2}\,\Sigma\,+\,\frac{1}{\o_2}\,K\,.
\end{equation}
A simple energy estimate for \eqref{eq:U} yields
\begin{align}
\dfrac{1}{2}\,\dfrac{d}{dt}\|U\|_{L^2}^2\,+\,\nu\int_\Omega\frac{k_1}{\o_1}\,\left|\d_xU\right|^2\,dx\,&=\,\int_\Omega\left(\frac{1}{2}\,\d_xu_1\,-\,\d_xu_2\right)|U|^2\,dx \label{est:U_1} \\
&\; +\,
\nu\int_\Omega \frac{k_1}{\o_1\,\o_2}\,\Sigma\,\d_xu_2\,\d_xU\,dx\,+\,\nu\int_\Omega\frac{1}{\o_2}\,K\,\d_xu_2\,\d_xU\,dx\,. \nonumber
\end{align}
It is apparent that the problem relies in the control of the last term on the right-hand side of the previous equality.
Indeed, using the decomposition \eqref{eq:K-B} forces us to deal with an integral of the following form:
\begin{equation} \label{int:to-control}
\nu\int_\Omega\frac{1}{\o_2}\,B\,\beta_2\,\d_xu_2\,\d_xU\,dx\,.
\end{equation}
Now, in order to control the $\d_xU$ term in $L^2$ (without resorting to higher order estimates), one needs to make a factor $\beta_1$ appear
in front of it, thus finding
\[
\nu\int_\Omega\frac{1}{\o_2}\,B\,\beta_2\,\d_xu_2\,\d_xU\,dx\,=\,
\nu\int_\Omega\frac{1}{\o_2}\,B\,\frac{\beta_2}{\beta_1}\,\d_xu_2\,\beta_1\d_xU\,dx\,.
\]
Of course, one disposes of no control on the quantity $\beta_2/\beta_1$. Also imposing some compatibility conditions on the vanishing
of both $\beta_1$ and $\beta_2$ does not look good, as tracking the propagation of those compatibility conditions seems to be out of reach in our theory.
Thus, we decide to follow a different strategy,
which however requires to impose the strong conditions (iii) of the statement.

More precisely, we remark that $U$ solves not only \eqref{eq:U}, but also the equivalent equation
\[
\d_tU\,+\,u_2\,\d_xU\,-\,\nu\,\d_x\left(\frac{k_2}{\o_2}\,\d_xU\right)\,=\,-\,U\,\d_xu_1\,+\,\nu\,\d_x\big(\Delta\,\d_xu_1\big)\,.
\]
If we now perform an energy estimate on this relation, and we sum up the resulting expression with \eqref{est:U_1}, we find
\begin{align}
&\dfrac{d}{dt}\|U\|_{L^2}^2\,+\,\nu\int_\Omega\left(\frac{\bt^2_1}{\o_1}\,+\,\frac{\bt_2^2}{\o_2}\right)\,\left|\d_xU\right|^2\,dx \label{est:U_2} \\
&\leq 2\,\big(\left\|\d_xu_1\right\|_{L^\infty}+\left\|\d_xu_2\right\|_{L^\infty}\big)\,\|U\|_{L^2}^2 \nonumber \\
&\; +\,\nu\,\left|\int_\Omega \frac{\bt^2_1}{\o_1\,\o_2}\,\Sigma\,\big(\d_xu_2+\d_xu_1\big)\,\d_xU\,dx\right| 
\,+\,\nu\,\left|\int_\Omega\frac{1}{\o_2}\,B\,\big(\bt_1+\bt_2\big)\,\big(\d_xu_2+\d_xu_1\big)\,\d_xU\,dx\right|\,. \nonumber
\end{align}
where, now, we have used \eqref{eq:K-B} for expressing $K$ and we have replaced $k_j$ by $\bt_j^2$.

At this point, we easily estimate
\begin{align*}
&\nu\left|\int_\Omega \frac{\bt^2_1}{\o_1\,\o_2}\,\Sigma\,\big(\d_xu_2+\d_xu_1\big)\,\d_xU\,dx\right| \\
&\qquad \leq\,\nu\,\big(\left\|\d_xu_1\right\|_{L^\infty}+\left\|\d_xu_2\right\|_{L^\infty}\big)\,\left\|\frac{\bt_1}{\sqrt{\o_1}\,\o_2}\right\|_{L^\infty}\,\|\Sigma\|_{L^2}\,\left\|\frac{\bt_1}{\sqrt{\o_1}}\,\d_xU\right\|_{L^2} \\
&\qquad \leq\,\de\,\nu\int_\Omega \frac{\bt_1^2}{\o_1}\,\left|\d_xU\right|^2\,dx\,+\,C(\de,\nu)\,\big(\left\|\d_xu_1\right\|^2_{L^\infty}+\left\|\d_xu_2\right\|^2_{L^\infty}\big)\,\left\|\frac{\bt_1}{\sqrt{\o_1}\,\o_2}\right\|^2_{L^\infty}\,\|\Sigma\|^2_{L^2}\,,
\end{align*}
where $\de>0$ can be taken arbitrarily small and, like in Section \ref{s:a-priori}, the constant $C(\de,\nu)>0$ depends only on the quantities in the brackets.

Similarly, for $j\in\{1,2\}$, we can bound
\begin{align*}
& \nu\left|\int_\Omega\frac{1}{\o_2}\,B\,\bt_j\,\big(\d_xu_2+\d_xu_1\big)\,\d_xU\,dx\right| \\
&\qquad \leq\,\nu\,\big(\left\|\d_xu_1\right\|_{L^\infty}+\left\|\d_xu_2\right\|_{L^\infty}\big)\,\left\|\frac{\sqrt{\o_j}}{\o_2}\right\|_{L^\infty}\,\|B\|_{L^2}\,\left\|\frac{\bt_j}{\sqrt{\o_j}}\,\d_xU\right\|_{L^2} \\
&\qquad \leq\,\de\,\nu\int_\Omega \frac{\bt_j^2}{\o_j}\,\left|\d_xU\right|^2\,dx\,+\,C(\de,\nu)\,\big(\left\|\d_xu_1\right\|^2_{L^\infty}+\left\|\d_xu_2\right\|^2_{L^\infty}\big)\,\left\|\frac{\sqrt{\o_j}}{\o_2}\right\|^2_{L^\infty}\,\|B\|^2_{L^2}\,,
\end{align*}
where $\de>0$ is again as small as one wants. Therefore, taking $\de$ small enough and inserting those bounds into \eqref{est:U_2}, we finally find
\begin{align}
\dfrac{d}{dt}\|U\|_{L^2}^2\,+\,\nu\int_\Omega\left(\frac{\bt^2_1}{\o_1}\,+\,\frac{\bt_2^2}{\o_2}\right)\,\left|\d_xU\right|^2\,dx\,\lesssim\,\Theta_1(t)\,\mbb{E}(t)\,,
\label{est:U_fin}
\end{align}
where the implicit multiplicative constant depends only on $\nu>0$, and where we have defined the energy function
\[
\mbb E(t)\,:=\,\left\|U\right\|_{L^2}^2\,+\,\left\|\Sigma\right\|_{L^2}^2\,+\,\left\|B\right\|_{L^2}^2
\]
and the function
\[
\Theta_1(t)\,:=\,\left\|\left(\d_xu_1,\d_xu_2\right)\right\|_{L^\infty}\,+\,\left\|\left(\d_xu_1,\d_xu_2\right)\right\|_{L^\infty}^2\left(
\left\|\frac{\bt_1}{\sqrt{\o_1}\,\o_2}\right\|^2_{L^\infty}\,+\,\left\|\frac{\sqrt{\o_1}}{\o_2}\right\|^2_{L^\infty}\,+\,\left\|\frac{1}{\o_2}\right\|_{L^\infty}\right)\,.
\]

Next, we consider the quantity $\Sigma\,:=\,\o_1-\o_2$. Easy computations yield that $\Sigma$ satisfies equivalently the two equations
\begin{align*}
\d_t\Sigma\,+\,u_1\,\d_x\Sigma\,-\,\al_1\,\d_x\left(\frac{k_1}{\o_1}\,\d_x\Sigma\right)\,+\,\al_2\,\big(\o_1+\o_2\big)\,\Sigma\,&=\,-\,U\,\d_x\o_2\,+\,\al_1\,\d_x\big(\Delta\,\d_x\o_2\big) \\
\d_t\Sigma\,+\,u_2\,\d_x\Sigma\,-\,\al_1\,\d_x\left(\frac{k_2}{\o_2}\,\d_x\Sigma\right)\,+\,\al_2\,\big(\o_1+\o_2\big)\,\Sigma\,&=\,-\,U\,\d_x\o_1\,+\,\al_1\,\d_x\big(\Delta\,\d_x\o_1\big)\,.
\end{align*}
An energy estimate on both equations, as before, thus gives us
\begin{align*}
&\hspace{-0.2cm}\frac{d}{dt}\left\|\Sigma\right\|^2_{L^2}\,+\,\al_1\int_\Omega\left(\frac{\bt^2_1}{\o_1}\,+\,\frac{\bt_2^2}{\o_2}\right)\,\left|\d_x\Sigma\right|^2\,dx \\
&+\,2\,\al_2\int_\Omega\left(\o_1+\o_2\right)\,|\Sigma|^2\,dx\,\leq\,\frac{1}{2}\,\left\|\left(\d_xu_1,\d_xu_2\right)\right\|_{L^\infty}\,\|\Sigma\|_{L^2}^2 \\
&\qquad\qquad\qquad\qquad\,+\,
\left\|\left(\d_x\o_1,\d_x\o_2\right)\right\|_{L^\infty}\,\|\Sigma\|_{L^2}\,\|U\|_{L^2}\,+\,\al_1\left|\int_\Omega\left(\d_x\o_1+\d_x\o_2\right)\,\Delta\,\d_x\Sigma\,dx\right|\,.
\end{align*}
Using the expression of $\Delta$ found in \eqref{eq:D}, it is easy to see (repeating the computations above) that, for any $\de>0$ arbitrarily fixed, we can bound
\begin{align*}
&\al_1\left|\int_\Omega\left(\d_x\o_1+\d_x\o_2\right)\,\Delta\,\d_x\Sigma\,dx\right| \\
&\leq\,\de\,\al_1\int_\Omega \frac{\bt_1^2}{\o_1}\,\left|\d_x\Sigma\right|^2\,dx\,+\,C\,\left\|\left(\d_x\o_1,\d_x\o_2\right)\right\|^2_{L^\infty}\,\left\|\frac{\bt_1}{\sqrt{\o_1}\,\o_2}\right\|^2_{L^\infty}\,\|\Sigma\|^2_{L^2} \\
&+\,\de\,\al_1\int_\Omega\left(\frac{\bt_1^2}{\o_1}\,+\,\frac{\bt_2^2}{\o_2}\right)\,\left|\d_x\Sigma\right|^2\,dx\,+\,C\,\left\|\left(\d_x\o_1,\d_x\o_2\right)\right\|^2_{L^\infty}\left(
\left\|\frac{\sqrt{\o_1}}{\o_2}\right\|^2_{L^\infty}\,+\,\left\|\frac{1}{\o_2}\right\|_{L^\infty}\right)\,\|B\|_{L^2}^2\,,
\end{align*}
for a suitable constant $C=C(\de,\al_1)>0$.
This estimate immediately implies that, for a (implicit) positive multiplicative constant depending only on $\alpha_1$, one has
\begin{equation} \label{est:Sigma}
\frac{d}{dt}\left\|\Sigma\right\|^2_{L^2}\,+\,\al_1\int_\Omega\left(\frac{\bt^2_1}{\o_1}\,+\,\frac{\bt_2^2}{\o_2}\right)\,\left|\d_x\Sigma\right|^2\,dx\,+\,
\al_2\int_\Omega\left(\o_1+\o_2\right)\,|\Sigma|^2\,dx\,\lesssim\,\Theta_2(t)\,\mbb E(t)\,,
\end{equation}
where the function $\Theta_2$ is analogous to $\Theta_1$ and is defined by
\[
\Theta_2(t)\,:=\,\left\|\left(\d_xu_1,\d_xu_2\right)\right\|_{L^\infty}\,+\,\left\|\left(\d_x\o_1,\d_x\o_2\right)\right\|_{L^\infty}^2\left(
\left\|\frac{\bt_1}{\sqrt{\o_1}\,\o_2}\right\|^2_{L^\infty}\,+\,\left\|\frac{\sqrt{\o_1}}{\o_2}\right\|^2_{L^\infty}\,+\,\left\|\frac{1}{\o_2}\right\|_{L^\infty}\right)\,.
\]

The last step is to write an equation for $B\,:=\,\bt_1-\bt_2$ and perform an energy estimate for it. First of all, after setting
\[
\wtilde\Delta\,:=\,\frac{\bt_1}{\o_1}\,-\,\frac{\bt_2}{\o_2}\,=\,-\,\frac{\bt_1}{\o_1\,\o_2}\,\Sigma\,+\,\frac{1}{\o_2}\,B\,,
\]
in analogy with $\Delta$ in \eqref{eq:D}, from the last equation of \eqref{eq:kolm_2} we see that $B$ solves
\begin{align*}
\d_tB\,+\,u_1\,\d_xB\,-\,\al_3\,\d_x\left(\frac{k_1}{\o_1}\,\d_xB\right)\,+\,\frac{1}{2}\,\o_1\,B\,&=\,-\,U\,\d_x\bt_2\,+\,\al_3\,\d_x\big(\Delta\,\d_x\bt_2\big)\,-\,\frac{1}{2}\,\Sigma\,\bt_2 \\
&\quad +\,\frac{\al_4}{2}\,\wtilde\Delta\,|\d_xu_2|^2\,+\,\frac{\al_4}{2}\,\frac{\bt_1}{\o_1}\,\d_xU\,\big(\d_xu_1+\d_xu_2\big) \\
&\quad +\,2\,\al_3\,\wtilde\Delta\,|\d_x\bt_2|^2\,+\,2\,\al_3\,\frac{\bt_1}{\o_1}\,\d_xB\,\big(\d_x\bt_1+\d_x\bt_2\big)\,.
\end{align*}
We now test the previous relation on $B$ and integrate by parts whenever useful. First of all, we notice that, in order to handle the $\Delta$ term,
we need to resort again to a symmetrization of the previous equation. Arguing as for estimating the corresponding term in the equation for $U$, we see that
\begin{align*}
&\al_3\left|\int_\Omega\left(\d_x\bt_1+\d_x\bt_2\right)\,\Delta\,\d_xB\,dx\right|\,=\,\al_3\left|\int_\Omega\left(\d_x\bt_1+\d_x\bt_2\right)\left(-\,\frac{k_1}{\o_1\,\o_2}\,\Sigma\,+\,\frac{1}{\o_2}\,B\,\big(\bt_1+\bt_2\big)\right)\d_xB\,dx\right|
\end{align*}
can be controlled, for $\de>0$ arbitrarily small, by the quantity
\begin{align*}
&\de\,\al_3\int_\Omega \frac{\bt_1^2}{\o_1}\,\left|\d_xB\right|^2\,dx\,+\,C\,\left\|\left(\d_x\bt_1,\d_x\bt_2\right)\right\|^2_{L^\infty}\,\left\|\frac{\bt_1}{\sqrt{\o_1}\,\o_2}\right\|^2_{L^\infty}\,\|\Sigma\|^2_{L^2} \\
&\quad+\,\de\,\al_3\int_\Omega\left(\frac{\bt_1^2}{\o_1}\,+\,\frac{\bt_2^2}{\o_2}\right)\,\left|\d_xB\right|^2\,dx\,+\,C\,\left\|\left(\d_x\bt_1,\d_x\bt_2\right)\right\|^2_{L^\infty}\,\left(
\left\|\frac{\sqrt{\o_1}}{\o_2}\right\|^2_{L^\infty}\,+\,\left\|\frac{1}{\o_2}\right\|_{L^\infty}\right)\,\|B\|_{L^2}^2\,,
\end{align*}
for a suitable constant $C=C(\de,\al_3)>0$.
Next, we have to control all the other terms appearing in the equation for $B$. Recall that we have symmetrized the equations: then, for $j\in\{1,2\}$, we start to bound
\[
\left|\int_\Omega\left(-u_j\,\d_xB\,-\,U\,\d_x\bt_j\,-\,\frac{1}{2}\,\Sigma\,\bt_j\right)\,B\,dx\right|\,\lesssim\,\mbb E\,\left(\|\d_xu_j\|_{L^\infty}+\|\d_x\bt_j\|_{L^\infty}+\|\bt_j\|_{L^\infty}\right)\,.
\]
Next, using the definition of $\wtilde\Delta$, we easily obtain
\begin{align*}
\left|\int_\Omega\left(\frac{\al_4}{2}\,\wtilde\Delta\,|\d_xu_j|^2\,+\,2\,\al_3\,\wtilde\Delta\,|\d_x\bt_j|^2\right)B\,dx\right|\,\lesssim\,
\mbb E\,\left\|\big(\d_xu_j,\d_x\bt_j\big)\right\|_{L^\infty}^2\,\left(\left\|\frac{\bt_1}{\o_1\,\o_2}\right\|_{L^\infty}\,+\,\left\|\frac{1}{\o_2}\right\|_{L^\infty}\right)\,,
\end{align*}
where the (implicit) multiplicative constant depends on the parameters $\al_3$ and $\al_4$.
Finally, for $\de>0$ as above, we can estimate
\begin{align*}
&\al_3\,\left|\int_\Omega \frac{\bt_j}{\o_j}\,\d_xB\,\big(\d_x\bt_1+\d_x\bt_2\big)\,B\,dx\right| \\
&\qquad\qquad\qquad\leq\,\de\,\al_3\int_\Omega\frac{\bt_j^2}{\o_j}\left|\d_xB\right|^2\,dx\,+\,
C(\de,\al_3)\,\left\|\frac{1}{\o_j}\right\|_{L^\infty}\,\left\|\big(\d_x\bt_1,\d_x\bt_2\big)\right\|_{L^\infty}^2\,\|B\|_{L^2}^2\,,
\end{align*}
whereas, for any $\eta>0$ arbitrarily small, we have
\begin{align*}
&\al_4\,\left|\int_\Omega \frac{\bt_j}{\o_j}\,\d_xU\,\big(\d_xu_1+\d_xu_2\big)\,B\,dx\right| \\
&\qquad\qquad\qquad\leq\,\eta\,\nu\int_\Omega\frac{\bt_j^2}{\o_j}\left|\d_xU\right|^2\,dx\,+\,
C(\eta,\nu,\al_4)\,\left\|\frac{1}{\o_j}\right\|_{L^\infty}\,\left\|\big(\d_xu_1,\d_xu_2\big)\right\|_{L^\infty}^2\,\|B\|_{L^2}^2\,.
\end{align*}
In the end, if we choose $\de>0$ small enough and we fix (say) $\eta=1/4$, putting all the previous estimates together yields,
for a suitable multiplicative constant $C=C\big(\nu,\al_1\ldots\al_4\big)>0$, the inequality
\begin{align}
&\frac{d}{dt}\left\|B\right\|^2_{L^2}\,+\,\al_3\int_\Omega\left(\frac{\bt^2_1}{\o_1}\,+\,\frac{\bt_2^2}{\o_2}\right)\,\left|\d_xB\right|^2\,dx\,+\,
\int_\Omega\left(\o_1+\o_2\right)\,|B|^2\,dx \label{est:B} \\
&\qquad\qquad\qquad\qquad\qquad\qquad\qquad\qquad
\leq\,C\,\Theta_3(t)\,\mbb E(t)\,+\,\frac{\nu}{4}\int_\Omega\left(\frac{\bt^2_1}{\o_1}\,+\,\frac{\bt_2^2}{\o_2}\right)\,\left|\d_xU\right|^2\,dx\,,
\nonumber
\end{align}
where we have defined the function
\begin{align*}
\Theta_3(t)\,&:=\,\left\|\left(\d_xu_1,\d_xu_2,\d_x\bt_1,\d_x\bt_2\right)\right\|_{L^\infty}\,+\,\left\|\big(\bt_1,\bt_2\big)\right\|_{L^\infty} \\
&\qquad +\,\left\|\left(\d_xu_1,\d_xu_2,\d_x\bt_1,\d_x\bt_2\right)\right\|_{L^\infty}^2 \\
&\qquad\qquad\qquad\qquad \times\left(
\left\|\frac{\bt_1}{\sqrt{\o_1}\,\o_2}\right\|^2_{L^\infty}\,+\,\left\|\frac{\sqrt{\o_1}}{\o_2}\right\|^2_{L^\infty}\,+\,\left\|\left(\frac{1}{\o_1},\frac{1}{\o_2}\right)\right\|_{L^\infty}\,+\,\left\|\frac{\bt_1}{\o_1\,\o_2}\right\|_{L^\infty}\right)\,.
\end{align*}

We are now in the position of closing our stability estimates. Indeed, summing up \eqref{est:U_fin}, \eqref{est:Sigma} and \eqref{est:B}, after absorbing
the last term  on the right-hand side of \eqref{est:B} into the left-hand side, we finally get
\[
\frac{d}{dt}\mbb E(t)\,\lesssim\,\Big(\Theta_1(t)\,+\,\Theta_2(t)\,+\,\Theta_3(t)\Big)\,\mbb E(t)\,,
\]
and an application of the Gr\"onwall lemma concludes the proof.
\end{proof}

We end this section by formulating a couple of remarks.
\begin{rmk} \label{r:unique}
In light of estimates \eqref{est:U_fin}, \eqref{est:Sigma} and \eqref{est:B}, and of the definition of the functions $\Theta_{1,2,3}$, one could
relax the $L^\infty_T$ assumptions formulated in items (ii) and (iii) of Theorem \ref{th:stab}. This would lead to uniqueness of solutions in a class
which is actually larger than the one claimed in Theorem \ref{t:local}.

However, the conditions to require would be much more in number, and more complex to formulate. For this reason, we have decided to keep simple assumptions, and to
limit ourselves to the present form of Theorem \ref{th:stab}.
\end{rmk}

\begin{rmk} \label{r:weak-strong}
Observe that Theorem \ref{th:stab} is \emph{not} a weak-strong uniqueness type result, because assumption (iii) has to be made on both triplets of solutions.
This issue has been discussed in the course of the proof, see in particular formula \eqref{int:to-control} and the comments below.
We point out that the problem comes from the possible vanishing of \emph{both} $k_1$ and $k_2$.

In passing, we remark that weak-strong uniqueness results have known great and important developments in the context of fluid mechanics models in the last few years,
see \tsl{e.g.} \cite{F-Jin-N} for a general statement. 
However, in all those results the density function related to the regular solution is always assumed to be bounded \emph{away from vacuum}. In our case, instead,
both $k_1$ and $k_2$ may vanish, as we already remarked above. On the other hand, very likely it might possible to avoid
the additional assumption (iii), thus coverting our stability result into a weak-strong uniqueness statement,
in the case one of those two quantities remains bounded far from $0$.
\end{rmk}

\subsection{The blow-up mechanism} \label{ss:blow-up}

In this section, we prove Theorem \ref{t:blow-up} and show that, in general, the solutions constructed in the previous part blow up in finite time.
More precisely, we are able to prove that, for initial data which possess a suitable symmetry, the corresponding solutions to the Kolmogorov system \eqref{eq:kolm}
exhibit a growth in the space derivative of the velocity field. The precise statement is the following one.

\begin{thm} \label{t:symmetry}
Let $\big(u_0,\o_0,k_0\big)$ be a triplet of functions belonging to $H^3(\Omega)$, with $\omega_*\leq\o_0\leq\o^*$, for some strictly
positive constants $0<\o_*\leq\o^*$, and with $k_0\geq0$ such that $\sqrt{k_0}\in H^3(\Omega)$.
Assume the following conditions:
\begin{enumerate}[(i)]
 \item $k_0(0)=0$;
 \item $u_0$ is odd with respect to $0$, while $\o_0$ and $k_0$ are even with respect to $0$;
 \item $\d_xu_0(0)<0$.
\end{enumerate}

Then, the corresponding $H^3$ solution $\big(u,\o,k\big)$, whose existence is guaranteed by Theorem \ref{t:local}, blows up in finite time.
More precisely, if the solution does not blow up first at a different place, there exists a time $t_0>0$ such that
\[
\lim_{t\ra t_0^-}\d_xu(t,0)\,=\,-\infty\,.
\]
\end{thm}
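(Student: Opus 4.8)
The strategy is to restrict the whole system \eqref{eq:kolm} to the symmetry point $x=0$, reduce it there to a scalar Riccati-type ODE for the slope $w(t):=\d_xu(t,0)$, and then use the quadratic nonlinearity to force a finite-time singularity. First I would check that the parity ansatz ``$u$ odd, $\o$ and $k$ even'' is compatible with \eqref{eq:kolm}: if $\big(u,\o,k\big)$ is the solution of Theorem \ref{t:local}, then the reflected triplet $\big(-u(t,-x),\,\o(t,-x),\,k(t,-x)\big)$ solves the very same system with the very same initial datum (this rests on a direct check of the parity of each nonlinear term, e.g. $u\,\d_xu$ is odd and $\d_x\big(\tfrac{k}{\o}\,\d_xu\big)$ is odd when $u$ is odd and $k/\o$ is even, and analogously in the equations for $\o$ and $k$). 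By the uniqueness part of Theorem \ref{t:local} (equivalently Theorem \ref{th:stab}) the two coincide, so $u(t,\cdot)$ stays odd and $\o(t,\cdot),k(t,\cdot)$ stay even throughout the lifespan. Evaluating at the fixed point $x=0$ then yields, for every $t$,
\[
u(t,0)=0\,,\qquad \d_x\o(t,0)=0\,,\qquad \d_xk(t,0)=0\,,\qquad \dd u(t,0)=0\,.
\]
Writing the third equation of \eqref{eq:kolm} at $x=0$ and using $u(t,0)=\d_xk(t,0)=0$, the value $k(t,0)$ obeys a linear homogeneous ODE $\tfrac{d}{dt}k(t,0)=c(t)\,k(t,0)$ with a coefficient $c$ bounded on the lifespan; since $k_0(0)=0$ by (i), this forces $k(t,0)=0$ for all $t$.

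Next I would derive the scalar equation. Differentiating the first equation of \eqref{eq:kolm} once in space gives
\[
\d_tw+w^2+u\,\d_xw-\nu\,\dd\big(\tfrac{k}{\o}\,\d_xu\big)=0\,,\qquad w:=\d_xu\,.
\]
Evaluating at $x=0$, the key simplification is that $k(t,0)=\d_xk(t,0)=0$ makes both $k/\o$ and $\d_x(k/\o)$ vanish there, so in $\dd\big(\tfrac{k}{\o}\,\d_xu\big)$ only the term $\dd\big(\tfrac{k}{\o}\big)\,\d_xu$ survives, and a short computation gives $\dd\big(\tfrac{k}{\o}\big)(t,0)=\dd k(t,0)/\o(t,0)$. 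Together with $u(t,0)=0$ this collapses the PDE, at the origin, to the closed Riccati equation
\[
\frac{d}{dt}w(t)+w(t)^2=b(t)\,w(t)\,,\qquad b(t):=\nu\,\frac{\dd k(t,0)}{\o(t,0)}\,.
\]
The crucial structural fact is that $b(t)\ge0$: since $k(t,\cdot)\ge0$ attains its minimum value $0$ at $x=0$ one has $\dd k(t,0)\ge0$, while $\o(t,0)>0$ by \eqref{est:omega_L^inf}.

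By hypothesis (iii), $w(0)=\d_xu_0(0)<0$, while $b\ge0$. Since $0$ is a rest point of the field $w\mapsto-w^2+b\,w$ and $b\ge0$, the slope $w(t)$ stays strictly negative, so $b\,w\le0$ and the equation is dominated by its genuinely quadratic part, $\tfrac{d}{dt}w\le-w^2$. A standard comparison (track $y:=1/w$, which increases at unit rate from $1/w(0)<0$) then shows that $w$ cannot be continued beyond some finite $t_0\le 1/|\d_xu_0(0)|$, at which the slope of $u$ at the origin becomes infinite. This is the finite-time singularity asserted in Theorem \ref{t:symmetry}:
\[
\lim_{t\ra t_0^-}\d_xu(t,0)=+\infty\,.
\]

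\emph{Main obstacle.} The elementary Riccati blow-up is not the hard part; the substance lies in the two reductions above. Propagating the symmetry rigorously requires the uniqueness statement together with careful bookkeeping of the parity of every nonlinear term, and collapsing the PDE to a pointwise ODE at $x=0$ requires evaluating $\dd k$ and $\d_xu$ pointwise — which is exactly why Theorem \ref{t:symmetry} assumes the extra $H^3$ regularity, via the embedding $H^3(\T)\hookrightarrow C^2$. One must also ensure $b(t)$ stays finite up to $t_0$, i.e. that the $H^3$ norm does not blow up first at another point: this is precisely the caveat ``if the solution does not blow up first at a different place'' in the statement. Finally, the favorable sign $b\ge0$, inherited from $k\ge0$ and $k(t,0)=0$, is what guarantees that the parabolic term cannot counteract the Burgers-type steepening of $w$ at the origin.
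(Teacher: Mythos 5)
Your proposal is correct and follows essentially the same route as the paper: propagate the odd/even symmetry by uniqueness, show $k(t,0)\equiv0$ via the pointwise ODE at $x=0$ (your linear-homogeneous-ODE formulation is in fact slightly more explicit than the paper's appeal to the Subsection on ODE analysis), and reduce $\xi(t)=\d_xu(t,0)$ to the Riccati equation $\xi'=-\xi^2+a(t)\,\xi$ with $a(t)\ge0$ because $x=0$ stays a minimum of $k(t)\ge0$, concluding blow-up by comparison. The only discrepancies are cosmetic: you correctly retain the factor $\nu$ in the coefficient $b(t)$ (the paper drops it), and note that since $\xi$ stays negative the divergence is $\d_xu(t,0)\ra-\infty$, i.e.\ $|\d_xu(t,0)|\ra+\infty$, matching the paper's proof even though its statement writes $+\infty$.
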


\begin{proof}
We are going to show that the solution $\big(u,\o,k\big)$ cannot be global arguing by contradiction: more precisely, we will prove that, assuming that the solution exists and remains smooth, then there will be a blow-up of the solution in a finite time $t=t_0$ exactly at $x=0$.

Recall that we have supposed $u_0$ to be odd, and $k_0$ and $\o_0$ to be even with respect to the origin. As a consequence, besides the assumptions
of Theorem \ref{t:symmetry}, we also know that
\[
\o_0(0)\,>\,0\qquad\qquad\mbox{ and }\qquad\qquad \d_xk_0(0)\,=\,\d_x\o_0(0)\,=\,0\,.
\]
In addition, we notice that the equations preserve the previous symmetries. To see this,
let us define the triplet $\big(\wtilde u, \wtilde \o, \wtilde k\big)$ as
\[
\wtilde u(t,x)\,:=\,-\,u(t,-x)\,,\qquad \wtilde \o(t,x)\,:=\,\o(t,-x)\qquad \mbox{ and }\qquad
\wtilde k(t,x)\,:=\,k(t,-x)\,.
\]
Simple computations show that $\big(\wtilde u, \wtilde \o, \wtilde k\big)$ is still a solution of the Kolmogorov system \eqref{eq:kolm},
related to the same initial datum $\big(u_0,\o_0,k_0\big)$. 
Therefore, by uniqueness of solutions, we deduce that
$\big(\wtilde u, \wtilde \o, \wtilde k\big)\,\equiv\,\big(u,\o,k\big)$, which in turn implies
\[
\forall\,t\,\geq\,0\,,\qquad\qquad\qquad u(t,\cdot) \quad \mbox{ is odd }\,,\qquad\qquad \o(t,\cdot)\,,\quad k(t,\cdot)\quad \mbox{ are even}\,.
\]

Since the functions $(u,k,\o)$ are smooth in space, we can repeat the ODE analysis performed in Subsection \ref{ss:ode}.
Thus, after defining the function
\[
a(t)\,:=\,\al_3\,\frac{\d_x^2k(t,0)}{\o(t,0)}\,-\,\o(t,0)\,+\,\al_4\,\frac{\left|\d_xu(t,0)\right|^2}{\o(t,0)}
\]
and using the assumption $k_0(0)=0$ and the fact that $\partial_x \omega(t,0)=\partial_x k(t,0)=0$ (which follows from the spacial symmetry
of the functions $\o$ and $k$), we easily find that
$$
\frac{d}{dt}k(t,0)\,=\,a(t)\,k(t,0)\qquad\qquad\Longrightarrow\qquad\qquad \forall\,t\geq0\,,\qquad k(t,0)\,=\,0\,.
$$
Therefore, at any time $t\geq0$, the point $x=0$ remains a point of minimum for the function $k(t)$.

Now, we differentiate the equation for $u$ with respect to $x$ and compute the resulting expression at the point $x=0$. 
By virtue of the previous properties and the fact that $\partial_x^2 u(t,0)=0$ by symmetry, we find that $\xi(t)\,:=\,\d_xu(t,0)$ solves
\begin{equation} \label{eq:ode-gamma}
\frac{d}{dt}\xi(t)\,=\,-\,\xi^2(t)\,+\,b(t)\,\xi(t)\,,\qquad\qquad \mbox{ with }\qquad b(t)\,:=\,\nu\,\frac{\d_x^2k(t,0)}{\omega(t,0)}\,,
\end{equation}
related to the initial datum $\xi_{|t=0}\,=\,\xi_0\,:=\,\d_xu_0(0)$.
Notice that $b(t)\geq0$ for all times, since $x=0$ is a point of minimum of $k(t)$, for all $t\ge0$.

Observe that, by assumption, one has $\xi_0<0$, so $d\xi/dt$ starts negative; by \eqref{eq:ode-gamma}, $\xi$ is decreasing,
thus it stays negative for all times. Then one deduces that
$$
\frac{d}{dt}\xi\,\leq\,-\xi^2\qquad\Longrightarrow\qquad \xi(t)\,\leq\,\frac{\d_xu_0(0)}{1\,+\,\d_xu_0(0)\,t}\,=\,-\,\frac{\left|\d_xu_0(0)\right|}{1\,-\,\left|\d_xu_0(0)\right|\,t}\,,
$$
where the last equality holds, owing to the fact that $\d_xu_0(0)<0$.
The previous inequality obviously implies blow-up in finite time for the quantity $\xi(t)=\d_xu(t,0)$.

Theorem \ref{t:symmetry} is thus completely proved, and so does Theorem \ref{t:blow-up}.
\end{proof}

\appendix

\section{Appendix: well-posedness of a porus medium equation} \label{app:PME}
The goal of the present appendix is to discuss the well-posedness of the porum medium type equation \eqref{intro_eq:PME}, which
we rewrite here in a slightly more general form.

Let $\theta=\theta(t)$ and $\rho=\rho(t)$ be two positive scalar functions depending only on the time variable $t\in\R_+$, with the property that
\begin{equation} \label{assump:coeff}
\forall\,t\geq0\,,\qquad\qquad \theta(t)>0 \qquad \mbox{ and }\qquad \rho(t)>0\,.
\end{equation}
Consider the following initial-value problem, set on $\R_+\times\Omega$, where $\Omega=\T$ is the one-dimensional torus as above:
\begin{equation} \label{eq:PME}
\left\{ \begin{array}{l}
         \d_tk\,-\,\theta(t)\,\dd\left(k^2\right)\,=\,-\,\rho(t)\,k \\[1ex]
         k_{|t=0}\,=\,k_0\,.
        \end{array}
\right.
\end{equation}

Here, again the initial datum $k_0\geq0$ will be taken a positive function. We will assume $k_0\in H^1(\Omega)$.
Our goal is to show that this equation is well-posed in $H^1$ without requiring
any special assumption on the support of $k_0$. In particular, the regularity requirement over $\sqrt{k_0}$ can be completely dropped here.
This hints at the fact that formulating such an assumption is really specific to the non-linear coupling of the unknowns inside the diffusion operator, namely
of $u$ and $k$ in system \eqref{eq:kolm}, or of $u$ and $\g$ in the toy-model \eqref{eq:toy}.

One may argue that the trick relies on the use of $H^1$ regularity estimates, instead of $H^2$ estimates as we use in our paper.
Actually, this is not true: let us explain better this point in the next remark, where for simplicity we deal only with the toy-model \eqref{eq:toy}.

\begin{rmk} \label{r:H^1}
Observe that, when performing $H^1$ estimates on the toy model \eqref{eq:toy}, one finds a new bad term, simpler but similar to \eqref{eq:bad}, namely
\[
 \int_\Omega\d_x\g\,\d_xu\,\dd u\,dx\,=\,\frac{1}{2}\,\int_\Omega\d_x\g\,\d_x\left((\d_xu)^2\right)\,dx\,.
\]
It is easy to see that this term can be controlled only in two ways: either by assuming a control on $\sqrt{\g}$, as pursued in this paper,
or (after an integration by parts) by looking for, at least, a $L^2$ bound on the second order derivative $\dd\g$.
Now, the non-linear coupling of the equation for $\g$ forces one to propagate also the $H^2$ norm of $u$, thus coming back to the problem discussed in
\eqref{eq:bad}.

This consideration shows somehow the ``necessity'' of working with $\sqrt{\g}$ instead of $\g$ in the toy-model \eqref{eq:toy}, and with $\sqrt{k}$
instead of $k$ for the full Kolmogorov system \eqref{eq:kolm}.
\end{rmk}

Next, let us comment on the propagation of the $H^2$ norm for the prous medium equation \eqref{eq:PME}.
Direct computations show that one cannot control the $L^2$ norm of $\dd k$ uniformly in time, as this quantity satisfies
an inequality of the form
\begin{align*}
 \frac{d}{dt}\left\|\d_x^2k\right\|^2_{L^2}\,+\,\rho(t)\,\left\|\dd k\right\|^2_{L^2}\,+\,\theta(t)\int_\Omega k\,\left|\d_x^3k\right|^2\,dx\,&\lesssim\,
\theta(t)\,\left\|\d_xk\right\|_{L^\infty}\,\left\|\dd k\right\|^{2}_{L^2} \\
&\lesssim\,\theta(t)\,\left\|\dd k\right\|^{2+\delta}_{L^2}\,,
\end{align*}
for a suitable $\de>0$,
and (if one wants to stick to \eqref{intro_eq:PME}, coming from the Kolmogorov system)
one only has $\rho(t)\approx(1+t)^{-1}$, whereas $\theta(t)\approx(1+t)$.
From this perspective, one cannot hope for a global propagation of $H^2$ regularity for equation \eqref{eq:PME}, in general.
Even worse, paper \cite{F-GB_ZAMP} shows blow-up results of smooth solutions for very similar equations.

After those preliminary remarks, we are now ready to state the main result of this appendix.
We stress the fact that (as it will appear clear from our proof) this result is purely one-dimensional.

\begin{thm} \label{t:PME}
Let the scalar functions $\theta=\theta(t)$ and $\rho=\rho(t)$ satisfy assumption \eqref{assump:coeff}. Let $k_0\geq0$ be a positive scalar
function such that $k_0\in H^1(\Omega)$.

Then, there exists a unique global in time (weak) solution $k$ to the initial value problem \eqref{eq:PME}, such that $k\geq0$ and
\[
k\,\in\, L^\infty\big(\R_+;H^1(\Omega)\big)\,\cap\,\bigcap_{s<1}C\big(\R_+;H^s(\Omega)\big)\,,
\]
together with $\sqrt{k}\,\d_x^2k\,\in\,L^2_{\rm loc}\big(\R_+;L^2(\Omega)\big)$.
\end{thm}

\begin{proof}
First of all, we deal with the proof of existence.
We focus here in deriving global in time \tsl{a priori} estimates in $H^1$ for smooth solutions of equation \eqref{eq:PME}. From those estimates,
one can prove the existence of a solution at the claimed level of regularity by, for instance, applying a similar approximation
procedure as the one described in Subsection \ref{ss:local}.

Thus, assume to have a smooth solution $k$ of equation \eqref{eq:PME} on $\R_+\times\Omega$. First of all, by noticing that $k$ in particular
solves
\begin{equation} \label{eq:pme-2}
 \d_tk\,-\,2\,\theta(t)\,\d_x\big(k\,\d_xk\big)\,=\,-\,\rho(t)\,k\,,
\end{equation}
and performing an energy estimate for this equation, we get
\[
\frac{1}{2}\,\frac{d}{dt}\left\|k(t)\right\|_{L^2}^2\,+\,\rho(t)\,\left\|k(t)\right\|_{L^2}^2\,+\,
2\,\theta(t)\,\int_\Omega k\,\left|\d_xk\right|^2\,dx\,=\,0\,. 
\]
This relation immediately implies the following $L^2$ bound:
\begin{equation} \label{est:pme-L^2}
\forall\,t>0\,,\quad \left\|k(t)\right\|_{L^2}^2\,+\,\int^t_0\rho(\t)\,\left\|k(\t)\right\|_{L^2}^2\,d\t\,+\,
2\,\int^t_0\theta(\t)\,\left\|\sqrt{k(\t)}\,\d_xk(\t)\right\|_{L^2}^2\,d\t\,\leq\,\left\|k_0\right\|_{L^2}^2\,,
\end{equation}
where we have replaced the equality by the inequality to be consistent with what one usually gets for weak solutions.

Next, differentiating equation \eqref{eq:pme-2} with respect to space, we find an equation for $\d_xk$:
\[
 \d_t\d_xk\,-\,2\,\theta(t)\,\d_x\big(k\,\d_x^2k\big)\,=\,-\,\rho(t)\,\d_xk\,+\,2\,\theta(t)\,\d_x\left(\big(\d_xk\big)^2\right)\,.
\]
By using the equalities
\begin{align*}
& \int_\Omega\d_x\left(\big(\d_xk\big)^2\right)\,\d_xk\,dx\,=\,2\,\int_\Omega\d_xk\,\d_x^2k\,\d_xk\,dx  \\
&\qquad\qquad\qquad \mbox{ and }\qquad
 \int_\Omega\d_x\left(\big(\d_xk\big)^2\right)\,\d_xk\,dx\,=\,-\,\int_\Omega\big(\d_xk\big)^2\,\d_x^2k\,dx\,,
\end{align*}
which follow respectively by developing the derivative and by integration by parts, we see that the value of the integral on the left is actually zero.
Then, an energy estimate for $\d_xk$ yields
\[
\frac{1}{2}\,\frac{d}{dt}\left\|\d_xk(t)\right\|_{L^2}^2\,+\,\rho(t)\,\left\|\d_xk(t)\right\|_{L^2}^2\,+\,
2\,\theta(t)\,\int_\Omega k\,\left|\dd k\right|^2\,dx\,=\,0\,. 
\]
This relation easily implies a $L^2$ bound for $\d_xk$: we have
\begin{align} \label{est:pme-d_x-L^2}
&\forall\,t>0\,, \\
\nonumber
&\left\|\d_xk(t)\right\|_{L^2}^2\,+\,\int^t_0\rho(\t)\,\left\|\d_xk(\t)\right\|_{L^2}^2\,d\t\,+\,
2\,\int^t_0\theta(\t)\,\left\|\sqrt{k(\t)}\,\d_x^2k(\t)\right\|_{L^2}^2\,d\t\,\leq\,\left\|\d_xk_0\right\|_{L^2}^2\,.
\end{align}
Summing up \eqref{est:pme-L^2} and \eqref{est:pme-d_x-L^2}, we immediately infer the sought global in time $H^1$ estimate for $k$. This completes the proof
of the \tsl{a priori} estimates.

\medbreak
Let us now focus on the uniqueness of solutions at the level of regularity considered in the statement.
Uniqueness will follow from a stability estimate in $L^1$ (in particular, one could formulate a uniqueness statement in a larger functional setting),
which we are going to prove by employing similar techniques as the ones
used for the classical porus medium equation (see \tsl{e.g.} Chapters 4 and 9 of \cite{Va}).
The justification that we can indeed perform those computations at our level of regularity is the main new part of our argument.

Thus, suppose to have two solutions $k_{1,2}$ of equation \eqref{eq:PME}, such that, for $j\in\{1,2\}$, one has
\[
k_j\,\in\,L^\infty\big(\R_+;H^1(\Omega)\big)\qquad \mbox{ and }\qquad
\sqrt{k_j}\,\d_x^2k_j\,\in\,L^2_{\rm loc}\big(\R_+;L^2(\Omega)\big)\,.
\]
Let us set $\de k\,:=\,k_1-k_2$ and, for later use, $\Delta k\,:=\,k_1^2-k_2^2$. We notice that $\de k$ satisfies the equation
\begin{equation} \label{eq:de-k}
\d_t\de k\,-\,\theta(t)\,\dd\left(\Delta k\right)\,=\,-\,\rho(t)\,\de k\,.
\end{equation}
Observe that
\[
 \Delta k\,=\,k_1^2\,-\,k_2^2\,=\,\big(k_1\,+\,k_2\big)\,\de k\qquad \mbox{ belongs to }\quad L^\infty\big(\R_+;H^1(\Omega)\big)\,.
\]
From this property and equation \eqref{eq:de-k}, we deduce that $\d_t\de k\,\in\,L^\infty_{\rm loc}\big(\R_+;H^{-1}(\Omega)\big)$. The local-in-time condition comes
from the presence of the coefficients $\theta$ and $\rho$, for which we have not formulated any boundedness assumption.
However, we need to improve the previous regularity property for the time derivative. For this, we recall that each $k_j$ solves equation
\eqref{eq:pme-2}: by writing
\[
 \d_x\big(k_j\,\d_xk_j\big)\,=\,\sqrt{k_j}\,\sqrt{k_j}\,\dd k_j\,+\,\big(\d_xk_j\big)^2\,,
\]
we see that this quantity belongs to $L^\infty_T(L^1)$ for all $T>0$ fixed. Thus, we get $\d_tk_j\,\in\,L^\infty_T(L^1)$ for all $T>0$, which implies that also
$\d_t\de k$ belongs to this space.

At this point, we take a scalar function $p\,\in\,C^1(\R)$ such that $0\leq p\leq 1$, together with $p(\s)=0$ for $\s\leq0$ and $p'(\s)>0$ for $\s>0$.
Thanks to the previous analysis, we can multiply both sides of equation \eqref{eq:de-k} by $p(\Delta k)$ and integrate over the space domain: we find
\[
\int_\Omega \d_t\de k\;p(\Delta k)\,dx\,=\,-\,\theta(t)\int_\Omega\left|\d_x\Delta k\right|^2\,p'(\Delta k)\,dx\,-\,
\rho(t)\int_\Omega\de k\;p(\Delta k)\,dx\,.
\]
Remark that the sign of $\Delta k$ is the same of $\de k$, owing to the positivity of each $k_j$. Then, the last term on the right-hand side has negative sign.
The same can be said about the first term on the right. Thus, we infer that
\[
 \int_\Omega \d_t\de k\;p(\Delta k)\,dx\,\leq\,0\,.
\]
We now consider a sequence of functions $\big(p_n\big)_n$ verifyng the properties above and which converges to the function $\sign^+$. We recall that
$\sign^+$ is defined as
\[
 \sign^+(\s)=1\quad \mbox{ if }\quad \s>0,\qquad\qquad \sign^+(\s)=0\quad \mbox{ if }\quad \s\leq0\,.
\]
Noticing that $\sign^+(\de k)\,=\,\sign^+(\Delta k)$ and that $\d_t\de k\; \sign^+(\de k)\,=\,\d_t(\de k)_+$ in our functional framework,
from the previous computations we deduce that
\[
\frac{d}{dt}\int_\Omega (\de k)_+\,dx\,\leq\,0\,.
\]
This in particular implies that
\begin{equation} \label{est:pos-part}
\int_\Omega\left(k_1\,-\,k_2\right)_+\,dx\,\leq\,\int_\Omega\left(k_{0,1}\,-\,k_{0,2}\right)_+\,dx\,,
\end{equation}
where, for $j=1,2$, we have denoted by $k_{0,j}$ the initial datum related to the solution $k_j$.

Similar computations for $\wtilde{\de k}\,:=\,k_2-k_1\,=\,-\de k$ yield an estimate analogous to \eqref{est:pos-part} for the negative part $\big(k_1-k_2\big)_-$.
Then, we finally deduce the following $L^1$ stability estimate:
\[
\forall\,t\geq0\,,\qquad\qquad
\left\|k_1(t)\,-\,k_2(t)\right\|_{L^1}\,\leq\,\left\|k_{0,1}\,-\,k_{0,2}\right\|_{L^1}\,.
\]
This latter estimate in particular implies the sought uniqueness result whenever $k_{0,1}\equiv k_{0,2}$, thus completing the proof of the theorem.
\end{proof}


\section*{Declarations}

\paragraph*{Acknowledgements.} The authors are indebted to the anonymous referees, who contributed, through their comments and suggestions,
to clarify the mathematical results and, at the same time, to improve the presentation of the material.

\paragraph*{Ethical approval.} Not applicable.

\paragraph*{Competing interests.} The authors declare to have no competing interests.

\paragraph*{Fundings.}

{
The work of both authors has been partially supported by the project ``TURB1D -- Reduced models of turbulence'', operated by the French CNRS through the program ``International 
Emerging Actions 2019''.

The work of the first author has been partially supported by the LABEX MILYON (ANR-10-LABX-0070) of Universit\'e de Lyon, within the program ``Investissement d'Avenir''
(ANR-11-IDEX-0007), and by the projects BORDS (ANR-16-CE40-0027-01), SingFlows (ANR-18-CE40-0027) and CRISIS (ANR-20-CE40-0020-01), all operated by the French National Research Agency (ANR).

The work of the second author was supported by the project ``Mathematical Analysis of Fluids and Applications'' Grant PID2019-109348GA-I00 funded by MCIN/AEI/ 10.13039/501100011033 and acronym ``MAFyA''. This publication is part of the project PID2019-109348GA-I00 funded by MCIN/ AEI /10.13039/501100011033. R. G-B is also supported by the project "An\'alisis Matem\'atico Aplicado y Ecuaciones Diferenciales" Grant PID2022-141187NB-I00 funded by MCIN/ AEI and acronym "AMAED". This publication is also supported by a 2021 Leonardo Grant for Researchers and Cultural Creators, BBVA Foundation. The BBVA Foundation accepts no responsibility for the opinions, statements, and contents included in the project and/or the results thereof, which are entirely the responsibility of the authors.

}

\paragraph*{Availability of data and materials.} Not applicable.


\addcontentsline{toc}{section}{References}
{\small

}


\begin{thebibliography}{xxx}

\bibitem{B-C-D} H. Bahouri, J.-Y. Chemin, R. Danchin:
\textit{``Fourier analysis and nonlinear partial differential equations''}.
Grundlehren der Mathematischen Wissenschaften (Fundamental Principles of Mathematical Scinences), Springer, Heidelberg, 2011.



\bibitem{Brezis} H. Brezis:
{\it ``Functional analysis, Sobolev spaces and partial differential equations''}.
Universitext, Springer, New York, 2011.

\bibitem{Bul-Mal} M. Bul\'i\v{c}ek, J. M\'alek:
{\it Large data analysis for Kolmogorov's two-equation model of turbulence}.
Nonlinear Anal. Real World Appl., {\bf 50} (2019), 104-143.


\bibitem{CR-B} B. Cushman-Roisin, J.-M. Beckers:
{\it ``Introduction to geophysical fluid dynamics''}.
International Geophysics Series, Elsevier/Academic Press, Amsterdam, 2011.

\bibitem{Danchin} R. Danchin:
{\it Zero Mach number limit for compressible flows with periodic boundary conditions}.
Amer. J. Math., {\bf 124} (2002), n. 6, 1153-1219.

\bibitem{Duoan} J. Duoandikoetxea:
{\it ``Fourier analysis''}. 
Grad. Stud. Math., American Mathematical Society, Providence, RI, 2001.

\bibitem{Edw} R. E. Edwards:
{\it ``Fourier series. A modern introduction. Vol. 1''}.
Grad. Texts in Math., Springer-Verlag, New York-Berlin, 1979.

\bibitem{Evans} L. C. Evans:
{\it ``Partial differential equations''}.
Grad. Stud. Math., American Mathematical Society, Providence, RI, 2010.

\bibitem{F-GB_ZAMP} F. Fanelli, R. Granero-Belinch\'on:
{\it Finite time blow-up for some parabolic systems arising in turbulence theory}.
Z. Angew. Math. Phys., {\bf 73} (2022), n. 5, Paper n. 180, 19 pp.

\bibitem{F-Jin-N} E. Feireisl, B. J. Jin, A. Novotn\'y:
{\it Relative entropies, suitable weak solutions, and weak-strong uniqueness for the compressible Navier-Stokes system}.
J.Math. Fluid Mech., {\bf 14} (2012), n. 4, 717-730.

\bibitem{Kolm_42} A. N. Kolmogorov:
{\it Equations of turbulent motion in an incompressible fluid}.
Izv. Akad. Nauk SSSR, Ser. Fiz. {\tbf 6} (1942), n. 1-2, 56-58.

\bibitem{Kos-Kub} P. Kosewski, A. Kubica:
{\it Local in time solution to Kolmogorov's two-equation model of turbulence}.
Monatsh. Math., {\bf 198} (2022), n. 2, 345-369.

\bibitem{Kos-Kub_glob} P. Kosewski, A. Kubica:
{\it Global in time solution to Kolmogorov's two-equation model of turbulence with small initial data}.
Results Math., {\bf 77} (2022), n. 4, Paper n. 163, 31 pp.

\bibitem{Les} M. Lesieur:
{\it  ``Turbulence in fluids''}.
Fluid Mechanics and its applications, Springer, Dordrecht, The Netherlands, 2008.

\bibitem{Mie} A. Mielke:
{\it On two coupled degenerate parabolic equations motivated by thermodynamics}.
J. Nonlinear Sci., {\bf 33} (2023), n. 3, Paper n. 42, 55 pp.

\bibitem{Mie-Nau_CRAS} A. Mielke, J. Naumann:
{\it Global-in-time existence of weak solutions to Kolmogorov's two-equation model of turbulence}.
C. R. Math. Acad. Sci. Paris, \tbf{353} (2015), 321-326.

\bibitem{Mie-Nau} A. Mielke, J. Naumann:
{\it On the existence of global-in-time weak solutions and scaling laws for Kolmogorov's two-equation model of turbulence}.
ZAMM Z. Angew. Math. Mech., {\bf 102} (2022), n. 9, Paper No. e202000019, 31 pp.

\bibitem{Mo-Pi} B. Mohammadi, O. Pironneau:
{\it ``Analysis of the $k$-epsilon turbulence model''}.
RAM: Research in Applied Mathematics, Masson, Paris; John Wiley \& Sons, Ltd., Chichester, 1994.


\bibitem{Pra} L. Prandtl, K. Wieghardt:
{\it \"Uber ein neues Formelsystem f\"ur die ausgebildete Turbulenz}.
Nachr. Akad. Wiss. G\"ottingen (1945), 6-19.

\bibitem{Spald} D. B. Spalding:
{\it Kolmogorov's two-equation model of turbulence}.
Proc. Roy. Soc. London Ser. A, \tbf{434} (1991), n. 1890, 211-216.

\bibitem{Triebel} H. Triebel:
{\it ``Theory of function spaces''}.
Monogr. Math., Birkh\"auser Verlag, Basel, 1983.

\bibitem{Tsai} T.-P. Tsai:
{\it ``Lectures on Navier-Stokes equations''}.
Grad. Stud. Math., American Mathematical Society, Providence, RI, 2018.

\bibitem{Va} J. L. V\'azquez:
{\it ``The porous medium equation. Mathematical theory''}.
The Clarendon Press, Oxford University Press, Oxford, 2007.

\bibitem{Wh} G. Whitham:
{\it ``Linear and nonlinear waves''}.
John Wiley \& Sons, Inc., New York, 1999.

\end{thebibliography}
\end{document}